\documentclass[12pt]{article}

%% Language and font encodings
\usepackage[english]{babel}
\usepackage[utf8x]{inputenc}
\usepackage[T1]{fontenc}

\newcommand\textbfit[1]{\textbf{\textit{#1}}}

%% Sets page size and margins
\usepackage[a4paper,top=3cm,bottom=2cm,left=3cm,right=3cm,marginparwidth=1.75cm]{geometry}

%% Useful packages
\usepackage{amsmath,amsthm,amsfonts,amssymb}
\usepackage{graphicx}
\usepackage[colorinlistoftodos]{todonotes}
\usepackage[colorlinks=true,allcolors=blue]{hyperref}
\usepackage{tikz}
\usetikzlibrary{cd}
\usetikzlibrary{shapes.geometric,positioning}  
\usepackage{multicol}
\usepackage[overload]{abraces}
\newcommand\restrict[1]{\raisebox{-.5ex}{$|$}_{#1}}
\usepackage{rotating}
\usepackage{enumerate}
\usepackage{titling}
\newcommand{\subtitle}[1]{%
  \posttitle{% 
  \vskip 1 em
    \par\end{center}
    \begin{center}\large#1\end{center}
   }%
}

%% concerns style of defns and thms
\theoremstyle{plain}
\newtheorem{thm}{Theorem}[subsection]
\newtheorem{prop}[thm]{Proposition}
\newtheorem{lemma}[thm]{Lemma}
\newtheorem{exmp}[thm]{Example}

\theoremstyle{definition}

\newtheorem*{remark}{\normalfont{\textit{Remark}}}
\newtheorem{defn}[thm]{Definition} 
\makeatletter
\def\@cite#1#2{\textup{[{#1\if@tempswa , #2\fi}]}}
\makeatother

%% makes sure sections start from 0
\setcounter{section}{-1}

%% start of article

\title{A Categorical Approach to Subgroups of Quantum Groups and their Crystal Bases}
\subtitle{Extended Essay Submitted for Part B MMath Mathematics\\
The University of Oxford}
\author{Rhiannon Savage
}

\date{Hilary Term 2019}

\begin{document}
\maketitle

\thispagestyle{empty}

\begin{abstract}
\noindent Suppose that we have a semisimple, connected, simply connected algebraic group $G$ with corresponding Lie algebra $\mathfrak{g}$. There is a Hopf pairing between the  universal enveloping algebra $U(\mathfrak{g})$ and the coordinate ring $O(G)$. By introducing a parameter $q$, we can consider quantum deformations $U_q(\mathfrak{g})$ and $O_q(G)$ respectively, between which there again exists a Hopf pairing.  We show that the category of crystals associated with $U_q(\mathfrak{g})$ is a monoidal category. We define subgroups of $U_q(\mathfrak{g})$ to be right coideal subalgebras, and subgroups of $O_q(G)$ to be quotient left $O_q(G)$-module coalgebras. Furthermore, we discuss a categorical approach to subgroups of quantum groups which we hope will provide us with a link to crystal basis theory.
\end{abstract}

\pagebreak

\thispagestyle{empty}
    \clearpage\mbox{}\clearpage

\pagebreak

\tableofcontents
\thispagestyle{empty}
\bigskip
\subsection*{Notation}
\begin{itemize}
    \item We set $k$ to be an algebraically closed field of characteristic 0. Unless otherwise stated, algebraic structures are defined over $k$. The identity of $k$ is 1.
    \item The variable $q$ is a fixed non zero scalar, which is not a root of unity, in $k$. For a discussion of quantum groups at roots of unity see, for example, \cite{lusztig90}.
    \item We occasionally omit the structure maps for algebras, Hopf algebras etc. , and simply refer to them as $A$, $H$ etc. 
    \item $\text{id}_X$ denotes the identity map on $X$.
    \item We refer to (co)unital, (co)associative (co)algebras as `(co)algebras'.
\end{itemize}
\subsection*{Prerequisites}
Although this is a Part B Extended Essay, knowledge of material from the following Part C courses is assumed: Category Theory, Lie Algebras, Representation Theory of Semisimple Lie Algebras, Algebraic Geometry.

\thispagestyle{empty}
\pagebreak
\section{Introduction}
\setcounter{page}{1}

The term `quantum' is used throughout many areas of Physics and Mathematics and has many different interpretations. Quantum mechanics, for example, can be interpreted as a non-commutative deformation of classical mechanics by introducing Planck's constant $h$. As $h$ tends to 0 we, in some sense, get our classical mechanics back. By analogy, quantum groups can be thought of as non-commutative deformations of certain classical algebraic structures by introducing a parameter $q$. Similarly, as $q$ tends to 1, we recover our classical algebra. There is no precise definition of a `quantum group' and indeed seeking to define one is not particularly informative. Our main philosophy in this essay will be that quantum group theory is the study of quantum analogues of classical ideas.\\

In Section \ref{introtohopfalgebras}, we define the notions of algebras and coalgebras. We note that the dual of a coalgebra is an algebra. The converse to this statement is true for finite dimensional algebras, but for infinite dimensional algebras $A$ we instead consider the restricted dual $A^\circ$, which is a coalgebra. A Hopf algebra is both an algebra and a coalgebra, satisfying certain compatibility conditions, equipped with an antiautomorphism known as an antipode. The restricted dual of a Hopf algebra is a Hopf algebra. As algebraic objects, Hopf algebras can be thought of as generalisations of groups. The structure maps mimic the structure of a group, with the antipode generalising the inverse operation. This interpretation has led many Mathematicians to define quantum groups to be non-commutative, non-cocommutative Hopf algebras. \\

We focus on the quantum groups associated with certain algebraic groups $G$ and their associated Lie algebras $\mathfrak{g}$. In Section \ref{quantiseduniversalenvelopingalgebra}, we discuss how we can embed a Lie algebra $\mathfrak{g}$ into an algebra $U(\mathfrak{g})$, its universal enveloping algebra, containing all its representations. Furthermore, $U(\mathfrak{g})$ can be given a Hopf algebra structure. We present the quantised universal enveloping algebra $U_q(\mathfrak{g})$ of a finite dimensional complex semisimple Lie algebra $\mathfrak{g}$, which again has the structure of a Hopf algebra. This construction was first introduced by  Drinfield and Jimbo, independently, in their study of the quantum Yang-Baxter equation. We note that as $q$ tends to $1$ we, in some sense, reclaim our classical universal enveloping algebra. \\

Suppose that $G$ is a semisimple, connected, simply connected algebraic group with associated Lie algebra $\mathfrak{g}$. Then, the coordinate ring $O(G)$ is isomorphic to the restricted dual of $U(\mathfrak{g})$. In Section \ref{thequantisedcoordinatering}, we construct the quantised coordinate ring $O_q(G)$ of $G$ using the coordinate functions of certain $U_q(\mathfrak{g})$-modules. We see that $O_q(G)$ is a Hopf subalgebra of $U_q(\mathfrak{g})^\circ$, in analogy with the classical case. \\

The following diagram illustrates the links between the classical and quantum world discussed so far.
\begin{center}
\begin{tikzpicture}
\node(a) at (0,-1) {$U(\mathfrak{g})^\circ$};
\node(b) at (0,0) {$U(\mathfrak{g})^*$};
\node(c) at (6,-1) {$U_q(\mathfrak{g})^\circ$};
\node(e) at (6,0) {$U_q(\mathfrak{g})^*$};
\node(d) at (0,-2) {$O(G)$};
\node(f) at (6,-2) {$O_q(G)$};
\node(g) at (0,-0.5) {\begin{turn}{90} $\subseteq$
\end{turn}};
\node(h) at (6,-0.5) {\begin{turn}{90} $\subseteq$
\end{turn}};
\node(i) at (0,2) {$U(\mathfrak{g})$};
\node(j) at (6,2) {$U_q(\mathfrak{g})$};
\node(k) at (0,-1.5) {\begin{turn}{90} $\simeq$
\end{turn}};
\node(l) at (6,-1.5) {\begin{turn}{90} $\subseteq$
\end{turn}};
\draw [->] (d)--(f)  node[midway,above] {\textit{quantisation}};
\draw [->] (i)--(j) node[midway,above] {\textit{quantisation}} ;

\draw[->] (i) -- (b) node[midway,left] {\textit{dual}};
\draw[->] (j) --(e) node[midway,right] {\textit{dual}};;

\end{tikzpicture}    
\end{center}

An important question to ask is whether we can translate our classical concept of a subgroup into the language of quantum groups. Knowing that quantum groups have a Hopf algebra structure, it seems intuitive to define subgroups to be Hopf subalgebras of $U_q(\mathfrak{g})$ or, dually, quotient Hopf algebras of $O_q(G)$. In Section \ref{subgroupsofquantumgroups} we show that this definition is too restrictive, and instead define, with justification, right coideal subalgebras of $U_q(\mathfrak{g})$ and quotient left $O_q(G)$-module coalgebras to be subgroups. In the latter part of this section we find a number of examples of subgroups of $U_q(\mathfrak{sl}_2)$ and construct corresponding subgroups of $O_q(SL_2)$.  \\

In Section \ref{categoricalapproachtosubgroupsofquantumgroups}, we consider a categorical approach to subgroups of quantum groups. In the classical case, if $G$ is a finite group and $H$ is a subgroup of $G$, then the category of comodules of $O(G)$ is a module category over the category of comodules of $O(H)$. We see that we obtain a similar result in the quantum case. Category theory is a useful tool in connecting mathematical concepts so we hope that this category-theoretic characterisation will provide us with the tools to study crystal bases for our subgroups. \\

We explore Kashiwara's definition of a category of crystals \cite{kashiwara95} in Section \ref{thecategoryofcrystals}. This is a monoidal category which admits a combinatorial structure called a crystal graph. The main examples of objects in this category are crystal bases of integrable $U_q(\mathfrak{g})$-modules. The name crystal comes from the fact that these integrable representations of $U_q(\mathfrak{g})$ have bases at $q=0$, which Kashiwara refers to as crystallisation at absolute zero.   \\

In Section \ref{crystalbasesforsubgroupsofquantumgroups}, we discuss crystal bases for some subgroups of quantum groups. In particular, we discuss Kashiwara's construction of crystal bases for the Borel subalgebras $U_q^{\geq 0}(\mathfrak{g})$ and $U_q^{\leq 0}(\mathfrak{g})$. We also consider crystal bases for the set of invariants of $U_q^\pm(\mathfrak{g})$ on $O_q(G)$. We see that the constructions presented in this section cannot be generalised to other subgroups. \\

Finally, in Section \ref{furtherresearch}, we discuss ways that the ideas presented in this essay could be developed, including possible applications of our categorical characterisation of subgroups to finding their crystal bases.\\

\section{An Introduction to Hopf Algebras }\label{introtohopfalgebras}

\subsection{Algebras and Coalgebras}

We first define the concepts of algebras and coalgebras. A coalgebra can be obtained from an algebra by `reversing the arrows' so is, in some sense, dual to an algebra.

\begin{defn}\cite[p. 39]{kassel95}
An \textbfit{algebra} $(A,\mu,\eta)$ consists of a vector space $A$, along with two linear maps $\mu:A\otimes A\rightarrow{A}$, $\eta:k\rightarrow{A}$, known as the \textit{multiplication} and the \textit{unit} respectively, such that the following diagrams\\

\noindent Associativity
$$\begin{tikzcd}
A\otimes A\otimes A \arrow[r, "\mu\otimes \text{id}_A"] \arrow[d, "\text{id}_A\otimes\mu"]
& A\otimes A \arrow[d, "\mu"] \\
A\otimes A \arrow[r, "\mu"]
& A
\end{tikzcd}$$
Unit
$$\begin{tikzcd}
k\otimes A \arrow[r, "\eta\otimes \text{id}_A"] \arrow[dr,"\simeq"]
& A\otimes A \arrow[d, "\mu"] 
& A\otimes k \arrow [l,"\text{id}_A\otimes \eta" above] \arrow[dl, "\simeq" above]\\
& A
\end{tikzcd}$$
commute.
\end{defn}

\begin{defn}
\begin{enumerate}
    \item A subspace $B$ of $A$ is a \textbfit{subalgebra} if $\mu(B\otimes B)\subseteq B$ and $\eta(1)=1_A\in B$.
    \item Given two algebras $(A,\mu,\eta)$ and $(A',\mu',\eta')$, an \textbfit{algebra morphism} is a linear map $f:A\rightarrow{A'}$ such that
\begin{equation}\mu'\circ (f\otimes f)=f\circ\mu\text{ and } f\circ\eta=\eta'\end{equation}
\end{enumerate}

\end{defn}

\begin{defn}\cite[p. 40]{kassel95}
A \textbfit{coalgebra} $(C,\Delta, \varepsilon)$ consists of a vector space $C$, along with two linear maps $\Delta:C\rightarrow{C\otimes C}$, $\varepsilon:C\rightarrow{k}$, known as the \textit{comultiplication} and the \textit{counit} respectively, such that the following diagrams\\
\flushleft Coassociativity
$$\begin{tikzcd}
C \arrow[r, "\Delta"] \arrow[d, "\Delta"]
& C\otimes C \arrow[d, "\text{id}_C\otimes\Delta "] \\
C\otimes C \arrow[r, "\Delta\otimes \text{id}_C"]
& C\otimes C\otimes C
\end{tikzcd}$$
Counit
$$\begin{tikzcd}
k\otimes C \arrow[r,<-, "\varepsilon\otimes \text{id}_C"] \arrow[dr,<-,"\simeq"]
& C\otimes C \arrow[d,<-, "\Delta"] 
& C\otimes k \arrow [l,<-,"\text{id}_C\otimes \varepsilon" above] \arrow[dl,<-, "\simeq" above]\\
& C
\end{tikzcd}$$

commute.

\end{defn}

\begin{remark}
If $c$ is an element of a coalgebra $(C,\Delta, \varepsilon)$ then, for $c_{1,i},c_{2,i}\in C$, we have $\Delta(c)=\sum_i c_{(1),i}\otimes c_{(2),i}=\sum c_{(1)}\otimes c_{(2)} $ in `\textit{Sweedler notation}'. The subscripts `$(1)$' and `$(2)$' indicate the order of the factors in the tensor product. Using the counit diagram above, $c=\sum\varepsilon(c_{(1)})c_{(2)}=\sum c_{(1)}\varepsilon(c_{(2)}).$
\end{remark}

\begin{defn}
\begin{enumerate}
    \item A subspace $D$ of $C$ is a \textbfit{subcoalgebra} if $\Delta(D)\subseteq D\otimes D$.
    \item Given two coalgebras $(C,\Delta, \varepsilon)$ and $(C',\Delta', \varepsilon')$, a \textbfit{coalgebra morphism} is a linear map $f:C\rightarrow{C'}$ such that
    \begin{equation}(f\otimes f)\circ\Delta=\Delta'\circ f\text{ and } \varepsilon=\varepsilon'\circ f\end{equation}
\end{enumerate}
\end{defn}
Algebras act on modules, and therefore it seems natural to consider coalgebras coacting on comodules. We first define our notion of a module over an algebra.
\begin{defn}\cite[p. 61]{kassel95}
Suppose $(A,\mu,\eta)$ is an algebra. A \textbfit{left $A$-module} $(M,\mu_M)$ consists of a vector space $M$, along with a linear map $\mu_M:A\otimes M\rightarrow{M}$, the \textit{action} of $A$ on $M$, such that the following diagrams\\

\flushleft Associativity
$$\begin{tikzcd}
A\otimes A\otimes M \arrow[r, "\mu\otimes \text{id}_M"] \arrow[d, "\text{id}_A\otimes \mu_M"]
& A\otimes M \arrow[d, "\mu_M "] \\
A\otimes M \arrow[r, "\mu_M"]
& M
\end{tikzcd}$$
Unit
$$\begin{tikzcd}
k\otimes M\arrow[r, "\eta\otimes \text{id}_M"] \arrow[dr,"\simeq"]
& A\otimes M \arrow[d, "\mu_M"]\\ 
& M
\end{tikzcd}$$

\flushleft commute.

\end{defn}

\begin{defn}\cite[p. 62]{kassel95}
Suppose $(C,\Delta, \varepsilon)$ is a coalgebra. A \textbfit{right C-comodule} $(N,\Delta_N)$ consists of a vector space $N$, along with a linear map $\Delta_N:N\rightarrow{N\otimes C}$, the \textit{coaction} of $C$ on $N$, such that the following diagrams\\

\flushleft Coassociativity
$$\begin{tikzcd}
N \arrow[r, "\Delta_N"] \arrow[d, "\Delta_N"]
& N\otimes C \arrow[d, "\text{id}_N\otimes \Delta "] \\
N\otimes C \arrow[r, "\Delta_N\otimes \text{id}_C"]
& N\otimes C\otimes C
\end{tikzcd}$$
Counit
$$\begin{tikzcd}
N\otimes k\arrow[r,<-, "\text{id}_N\otimes\varepsilon"] \arrow[dr,<-,"\simeq"]
& N\otimes C \arrow[d,<-, "\Delta_N"]\\ 
& N
\end{tikzcd}$$

\flushleft commute.

\end{defn}

\begin{remark}
If $n$ is an element of a right comodule $(N,\Delta_N)$, then $\Delta_N(n)\in N\otimes C$ is of the form $\sum n_{(0)}\otimes n_{(1)}$ in Sweedler Notation. We can define left $C$-comodules similarly. Similarly, if $n'$ is an element of a left comodule $(N',\Delta_{N'})$ then $\Delta_{N'}(n')=\sum n'_{(-1)}\otimes n'_{(0)}$.
\end{remark}

\subsection{Duality of Algebras and Coalgebras}\label{dualityalgcoalg}\label{dualalgcoalg}

We now explore the duality between coalgebras and algebras. Suppose that we have a vector space $V$; by definition of the tensor product of linear maps, detailed in \cite[Chapter II]{kassel95}, we have
\begin{equation}(f_1\otimes f_2)(v_1\otimes v_2)=f_1(v_1)\otimes f_2(v_2)\end{equation}
for $f_1,f_2\in V^*$ and $v_1,v_1\in V$. Therefore, $V^*\otimes V^*$ can be identified with a subset of $(V\otimes V)^*$. Moreover, when $V$ is finite dimensional $\text{dim}(V^*\otimes V^*)=\text{dim}((V\otimes V)^*)$ so $V^*\otimes V^*=(V\otimes V)^*$.

\begin{prop}\cite[Lemma 1.2.2]{montgomery93}
The dual of a coalgebra is an algebra.
\end{prop}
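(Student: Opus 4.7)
The plan is to transfer the coalgebra structure on $C$ to an algebra structure on $C^*$ by dualising every map and reversing every arrow. Concretely, define the multiplication as the composition
$$\mu_{C^*}\colon C^*\otimes C^* \hookrightarrow (C\otimes C)^* \xrightarrow{\Delta^*} C^*,$$
where the first map is the inclusion from the preceding paragraph. On pure tensors this reads $(\mu_{C^*}(f\otimes g))(c) = \sum f(c_{(1)})\,g(c_{(2)})$ in Sweedler notation. Define the unit $\eta_{C^*}\colon k \to C^*$ by $\eta_{C^*}(1) = \varepsilon$, using the identification $k^* \cong k$.

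To verify associativity, I would unwind the two legs of the associativity square at a point $c \in C$. Evaluating $\mu_{C^*}\circ(\mu_{C^*}\otimes \text{id}_{C^*})$ on $f\otimes g\otimes h$ gives $\sum f(c_{(1)(1)})\,g(c_{(1)(2)})\,h(c_{(2)})$, while $\mu_{C^*}\circ(\text{id}_{C^*}\otimes\mu_{C^*})$ gives $\sum f(c_{(1)})\,g(c_{(2)(1)})\,h(c_{(2)(2)})$. Coassociativity of $\Delta$ applied to $c$ makes these expressions coincide. The unit axiom is equally routine: for instance, $(\mu_{C^*}\circ(\eta_{C^*}\otimes\text{id}_{C^*}))(1\otimes f)(c) = \sum\varepsilon(c_{(1)})\,f(c_{(2)}) = f\bigl(\sum\varepsilon(c_{(1)})c_{(2)}\bigr) = f(c)$ by the counit identity already recorded in the Sweedler remark, and the right unit is symmetric.

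The only point requiring any care is that in the infinite-dimensional case we only have an inclusion $C^*\otimes C^* \subseteq (C\otimes C)^*$, not equality. This is no obstacle for the present direction, since the formula above manifestly sends $C^*\otimes C^*$ into $C^*$, so $\mu_{C^*}$ is well defined without needing the reverse containment. It is precisely this asymmetry which forces the converse statement to restrict attention to the subspace $A^\circ$ of $A^*$, as outlined in the introduction.
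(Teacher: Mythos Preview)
Your proof is correct and follows exactly the same approach as the paper: define $\mu_{C^*}$ as the restriction of $\Delta^*$ to $C^*\otimes C^*\subseteq (C\otimes C)^*$ and $\eta_{C^*}=\varepsilon^*$, then verify the algebra axioms. The paper is terser, simply noting that the commutativity conditions can be checked, whereas you actually carry out those checks in Sweedler notation; your closing remark about the asymmetry with the converse is also apt and anticipates the paper's subsequent discussion of $A^\circ$.
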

\begin{proof}
Suppose that we have a coalgebra $(C,\Delta,\varepsilon)$. The transpose of $\Delta$, \\$\Delta^*:(C\otimes C)^*\rightarrow{C^*}$, restricts to a map $\Delta^*\restrict{C^*\otimes C^*}:C^*\otimes C^*\rightarrow{C^*}$. If we define $A=C^*$, $\mu=\Delta^*\restrict{C^*\otimes C^*}$, $\eta=\varepsilon^*$ then $(A,\mu,\eta)$ can be shown to be an algebra by checking the commutativity conditions. 
\end{proof}

\begin{remark}
The multiplication $\Delta^*\restrict{C^*\otimes C^*}$ is the \textit{convolution product} and is often written $f\star g$ for $f,g\in C^* $. If $c\in C$, 
\begin{equation}
\Delta^*\restrict{C^*\otimes C^*}(f\otimes g)(c)=(f\star g)(c)=(f\otimes g)\Delta(c)=\sum f(c_{(1)})g(c_{(2)})    
\end{equation}
\end{remark}

We want to consider under which conditions the dual of an algebra is a coalgebra. If $A$ is not a finite-dimensional algebra, $A^*\otimes A^*$ is a proper subset of  $(A\otimes A)^*$ and so the image of $\mu^*:A^*\rightarrow{(A\otimes A)^*}$ may not lie in $A^*\otimes A^*$. We define the restricted dual.

\begin{defn}\cite[Definition 1.2.3]{montgomery93}\label{rd}
The \textbfit{restricted dual} of an algebra $A$ is \begin{equation}A^\circ =\{f\in A^*: f(I)=0 \text{ for some ideal } I \text{ of } A \text{ with } dim_k(A/I)< \infty\}\end{equation}
\end{defn}

The following Lemma provides us with an alternative definition.

\begin{lemma}\cite[Lemma 1.4.8, Proposition 1.4.10]{timmermann08}\label{restricteddual}
$A^\circ$ is equivalent to 
\begin{equation}
\{f\in A^*\mid \mu^*(f)\in A^*\otimes A^*\}    
\end{equation}Moreover, $\mu^*(A^\circ)\subseteq A^\circ\otimes A^\circ$.
\end{lemma}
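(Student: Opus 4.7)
My plan is to establish the two inclusions separately; the ``Moreover'' clause will drop out of the first direction as a byproduct.

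For the forward inclusion, suppose $f \in A^\circ$ vanishes on a cofinite ideal $I \subseteq A$. Then $f$ descends to a linear functional $\bar f : A/I \to k$ through the canonical projection $\pi : A \to A/I$, and $\mu$ descends to a multiplication $\bar\mu$ on the finite-dimensional quotient algebra $A/I$. Since $A/I$ is finite dimensional, the discussion at the start of Section \ref{dualityalgcoalg} gives $(A/I \otimes A/I)^* = (A/I)^* \otimes (A/I)^*$, so we may write $\bar\mu^*(\bar f) = \sum_{i=1}^n \bar g_i \otimes \bar h_i$ with $\bar g_i, \bar h_i \in (A/I)^*$. A short diagram chase using $\pi \circ \mu = \bar\mu \circ (\pi \otimes \pi)$ then yields $\mu^*(f) = \sum_{i=1}^n (\bar g_i \circ \pi) \otimes (\bar h_i \circ \pi) \in A^* \otimes A^*$. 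Every factor vanishes on $I$ and descends to a functional on the finite-dimensional algebra $A/I$, so each lies in $A^\circ$. This simultaneously proves the forward inclusion and the ``Moreover'' statement $\mu^*(A^\circ) \subseteq A^\circ \otimes A^\circ$.

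For the reverse inclusion, assume $\mu^*(f) = \sum_{i=1}^n g_i \otimes h_i \in A^* \otimes A^*$. Unpacking convolution, $f(xy) = \sum_i g_i(x) h_i(y)$ for every $x, y \in A$. Consider the linear map $\rho : A \to A^*$ defined by $\rho(x)(y) = f(xy)$. Its image lies inside the finite-dimensional subspace $\mathrm{span}(h_1, \dots, h_n)$, so $J := \ker \rho$ has finite codimension in $A$; a direct check shows $J$ is a right ideal, and specialising $y = 1$ gives $J \subseteq \ker f$. To upgrade this to a cofinite \emph{two-sided} ideal contained in $\ker f$, consider the finite-dimensional right $A$-module $M := A/J$ and its annihilator $I := \mathrm{Ann}_A(M) = \{a \in A : Ma = 0\}$, which is automatically two-sided and contained in $J$, hence in $\ker f$. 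The action of $A$ on $M$ provides an algebra homomorphism $A \to \mathrm{End}_k(M)^{\mathrm{op}}$ with kernel exactly $I$, so $A/I$ embeds in the finite-dimensional algebra $\mathrm{End}_k(M)$, and $\dim_k(A/I) < \infty$. Hence $f \in A^\circ$.

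The principal obstacle I expect is precisely this last step: while $J$ arises naturally as a right ideal, producing a two-sided ideal of finite codimension contained in $\ker f$ requires the slightly non-obvious annihilator trick above. Everything else is routine finite-dimensional linear duality.
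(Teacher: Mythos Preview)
Your proof is correct and complete. Both directions are handled cleanly: the forward inclusion uses the finite-dimensionality of $A/I$ to identify $(A/I)^*\otimes(A/I)^*$ with $(A/I\otimes A/I)^*$ and then pulls back along $\pi$, simultaneously yielding the ``Moreover'' clause; the reverse inclusion correctly produces a cofinite right ideal $J\subseteq\ker f$ via the map $\rho$, and the annihilator of the finite-dimensional right module $A/J$ furnishes the required two-sided cofinite ideal.

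Note, however, that the paper does not actually supply a proof of this lemma---it is stated with a citation to \cite[Lemma 1.4.8, Proposition 1.4.10]{timmermann08} and no argument is given. So there is no ``paper's own proof'' to compare against; your argument is the standard one found in the cited reference and in most treatments (e.g.\ Montgomery, Sweedler). The annihilator step you flagged as ``slightly non-obvious'' is indeed the only place requiring care, and you handled it correctly.
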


\begin{prop}
\cite[Proposition 1.2.4]{montgomery93} The restricted dual of an algebra is a coalgebra.
\end{prop}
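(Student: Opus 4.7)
The plan is to produce the coalgebra structure maps on $A^\circ$ by dualising those of the algebra $(A,\mu,\eta)$, and then verify the coassociativity and counit axioms by taking transposes of the corresponding algebra diagrams. Concretely, I would define the comultiplication to be the restriction $\Delta := \mu^*|_{A^\circ}$ and the counit to be $\varepsilon := \eta^* \colon A^\circ \to k^* \cong k$ (where we identify $k^*$ with $k$ via evaluation at $1$).

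The first step is to argue that these maps are well-defined with the correct codomains. By Lemma \ref{restricteddual}, for any $f \in A^\circ$ we have $\mu^*(f) \in A^* \otimes A^*$, and moreover $\mu^*(A^\circ) \subseteq A^\circ \otimes A^\circ$, so $\Delta$ indeed takes values in $A^\circ \otimes A^\circ$. The counit $\varepsilon = \eta^*$ clearly lands in $k$, so no restriction is needed there.

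The second step is the verification of coassociativity. Associativity of $\mu$ says $\mu \circ (\mu \otimes \mathrm{id}_A) = \mu \circ (\mathrm{id}_A \otimes \mu)$; taking transposes yields
\begin{equation}
(\mu^* \otimes \mathrm{id}_{A^*}) \circ \mu^* = (\mathrm{id}_{A^*} \otimes \mu^*) \circ \mu^*
\end{equation}
as maps into $(A \otimes A \otimes A)^*$, and restricting to $A^\circ$ (using repeatedly that $\mu^*$ preserves the restricted dual) gives exactly coassociativity of $\Delta$ on $A^\circ$. The counit axiom is obtained similarly: the unit diagram for $A$, namely $\mu \circ (\eta \otimes \mathrm{id}_A) \cong \mathrm{id}_A \cong \mu \circ (\mathrm{id}_A \otimes \eta)$ under the canonical isomorphisms $k \otimes A \cong A \cong A \otimes k$, dualises to the required identities $(\varepsilon \otimes \mathrm{id}_{A^\circ}) \circ \Delta \cong \mathrm{id}_{A^\circ} \cong (\mathrm{id}_{A^\circ} \otimes \varepsilon) \circ \Delta$.

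The main obstacle here is really bookkeeping rather than content: one must be careful with the nested identifications $A^\circ \otimes A^\circ \otimes A^\circ \hookrightarrow A^* \otimes A^* \otimes A^* \hookrightarrow (A \otimes A \otimes A)^*$, and confirm at each stage that the intermediate images actually lie in the smaller subspaces so that the restricted maps compose as claimed. This follows by iterating Lemma \ref{restricteddual}, since applying $\mu^*$ twice in either order to an element of $A^\circ$ yields an element of $A^\circ \otimes A^\circ \otimes A^\circ$. Once this is established, the diagrams commute automatically by transposing the algebra diagrams, with no further computation required.
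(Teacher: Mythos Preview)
Your proposal is correct and follows essentially the same approach as the paper: define $\Delta=\mu^*\restrict{A^\circ}$ and $\varepsilon=\eta^*\restrict{A^\circ}$, invoke Lemma \ref{restricteddual} to ensure $\Delta$ lands in $A^\circ\otimes A^\circ$, and then verify the coalgebra axioms by dualising the algebra diagrams. In fact you supply more detail than the paper, which simply asserts that ``$(C,\Delta,\varepsilon)$ can be shown to be a coalgebra by checking the commutativity conditions.''
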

\begin{proof}
Suppose that we have an algebra $(A,\mu,\eta)$. The map $\mu^*\restrict{A^\circ}:A^\circ\rightarrow{(A\otimes A)^*}$ is equivalent to a map $\mu^*\restrict{A^\circ}:A^\circ\rightarrow{A^\circ\otimes A^\circ}$ by Lemma \ref{restricteddual}. If we let $C=A^\circ$, $\Delta=\mu^*\restrict{A^\circ}$ and $\varepsilon=\eta^*\restrict{A^\circ}$, then  $(C,\Delta,\varepsilon)$ can be shown to be a coalgebra by checking the commutativity conditions.
\end{proof}
\begin{remark}
If $A$ is finite-dimensional then $A^\circ=A^*$, and so $A^*$ is a coalgebra.
\end{remark}
Moreover, we have the following duality between modules and comodules.

\begin{lemma}\cite[Section I.9.16]{browngoodearl02}\label{modcomod}
Let $A$ be an algebra, and $C$ be a coalgebra. 
\begin{enumerate}
    \item \label{part1} Suppose that $(N,\Delta_N)$ is a right $C$-comodule with $\Delta_N(n)=\sum n_{(0)}\otimes n_{(1)}$ for $n\in N$, then $N$ is a left $C^*$-module with module action
    \begin{equation} f\cdot n=\sum f(n_{(1)})n_{(0)}\end{equation}
    for $f\in C^*$.
    \item If $M$ is a left $A$-module, then $M$ can be made into a right $A^\circ$-comodule whose associated left $A$-module (as in \ref{part1}) is $M$ if and only if dim$_k(A\cdot m)<\infty$ for all $m\in M$.
\end{enumerate}
\end{lemma}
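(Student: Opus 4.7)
The plan is to treat the two parts separately. Part \ref{part1} is a direct verification of the left $C^*$-module axioms using Sweedler notation; part 2 splits into a straightforward necessary direction and a constructive sufficient direction, the crux of the latter being to show that the coefficients of $a \cdot m$ relative to a basis of $Am$ land in $A^\circ$ rather than only in $A^*$.

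For part \ref{part1}, first I would check the unit axiom: $\varepsilon \cdot n = \sum \varepsilon(n_{(1)}) n_{(0)} = n$ is immediate from the counit diagram of $(N,\Delta_N)$. For associativity $(f \star g) \cdot n = f \cdot (g \cdot n)$, expanding the left-hand side using the convolution formula gives $\sum f(n_{(1),(1)})\, g(n_{(1),(2)})\, n_{(0)}$, while the right-hand side unfolds to $\sum g(n_{(1)})\, f(n_{(0),(1)})\, n_{(0),(0)}$. These agree after invoking coassociativity of the comodule, which in Sweedler form reads
\begin{equation}
\sum n_{(0),(0)} \otimes n_{(0),(1)} \otimes n_{(1)} = \sum n_{(0)} \otimes n_{(1),(1)} \otimes n_{(1),(2)}.
\end{equation}

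For part 2, suppose first that $M$ carries a right $A^\circ$-comodule structure $\Delta_M(m) = \sum_{i=1}^{r} m_{(0),i} \otimes m_{(1),i}$ whose induced left $A$-action, $a \cdot m = \sum_i m_{(1),i}(a)\, m_{(0),i}$, matches the given one. Then $Am$ lies in $\mathrm{span}_k\{m_{(0),1},\ldots,m_{(0),r}\}$, so $\dim_k(Am) \leq r < \infty$. Conversely, assume each $Am$ is finite-dimensional. Fix $m \in M$, pick a basis $m_1,\ldots,m_n$ of $Am$, and expand $a \cdot m = \sum_i f_i(a)\, m_i$ to define $f_i \in A^*$. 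The annihilator $I = \{a \in A : a \cdot Am = 0\}$ is a two-sided ideal (using that $Am$ is $A$-stable), and $A/I$ embeds into the finite-dimensional algebra $\mathrm{End}_k(Am)$, so $I$ has finite codimension. Since $f_i(I) = 0$ by linear independence of the $m_i$, Definition \ref{rd} gives $f_i \in A^\circ$. Setting $\Delta_M(m) := \sum_i m_i \otimes f_i$ and extending linearly produces a candidate coaction; by construction, the induced $A$-action recovers the original.

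The main obstacle will be verifying that this $\Delta_M$ genuinely satisfies the coassociativity and counit diagrams, together with basis independence. Both boil down to dualizing the $A$-module axioms: coassociativity for $\Delta_M$ corresponds to $(ab)\cdot m = a\cdot(b\cdot m)$ evaluated against arbitrary pairs in $A$, while the counit axiom corresponds to $1_A \cdot m = m$. The bookkeeping is routine in Sweedler notation, but care is needed at exactly one point, namely to ensure every extracted functional lies in $A^\circ$; this is precisely where the finite-dimensionality hypothesis $\dim_k(Am) < \infty$ does the essential work by forcing $\mathrm{Ann}_A(Am)$ to have finite codimension.
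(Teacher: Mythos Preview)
The paper does not actually prove this lemma; it is stated with a citation to \cite[Section I.9.16]{browngoodearl02} and no argument is given. Your proposal is therefore supplying a proof where the paper provides none, and the approach you outline is the standard one and is correct. One small remark: in the sufficient direction of part 2, the phrase ``extending linearly'' is slightly misleading, since $\Delta_M(m)$ is defined separately for each $m$ using a basis of $Am$; linearity of $\Delta_M$ is then something to be \emph{verified}, not imposed, and it follows (together with basis independence) from the characterising property $(\mathrm{id}_M \otimes \mathrm{ev}_a)\Delta_M(m) = a\cdot m$ for all $a \in A$.
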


\subsection{Hopf Algebras}

\begin{defn}\cite[Definition I.9.7]{browngoodearl02}
A \textbfit{bialgebra} $(H,\mu,\eta,\Delta,\varepsilon)$ consists of a vector space $H$ such that:
\begin{itemize}
    \item $(H,\mu,\eta)$ is an algebra and $(H,\Delta,\varepsilon)$ is a coalgebra,
    \item The multiplication $\mu$ and the unit $\eta$ are coalgebra morphisms.
\end{itemize}
\end{defn} 
\begin{remark}
The condition that $\mu$ and $\eta$ are coalgebra morphisms is equivalent to $\Delta$ and $\varepsilon$ being algebra morphisms \cite[Theorem III.2.1]{kassel95}.
\end{remark}

We are now ready to make our definition of a Hopf algebra.
\begin{defn}\cite[Definition I.9.9]{browngoodearl02}\label{hopf}
A \textbfit{Hopf algebra} $(H,\mu,\eta,\Delta,\varepsilon, S)$ is a bialgebra equipped with a map $S:H\rightarrow{H}$, called the \textit{antipode}, such that 
\begin{equation}
    \mu\circ(S\otimes \text{id}_H)\circ\Delta=\mu\circ(\text{id}_H\otimes S)\circ\Delta=\eta\otimes\varepsilon
\end{equation}
We see that $S$ is an inverse to $\text{id}_H$ under the convolution product $\star$
\begin{equation}
    S\star \text{id}_H=\text{id}_H\star S=\eta\circ\varepsilon
\end{equation}
\end{defn}

We note the following definitions.
\begin{defn}\label{hopfsubalgebra}\begin{enumerate}
\item A \textbfit{(co)module} over a Hopf algebra is a (co)module over the underlying (co)algebra structure. 
\item A subset $H'$ of a Hopf algebra $H$ is a \textbfit{Hopf subalgebra} if it is both a subalgebra and a subcoalgebra of $H$, and $S(H')\subseteq H'$
\item \label{hopfmorphism} Given Hopf algebras $(H,\mu,\eta,\Delta,\varepsilon, S)$ and $(H',\mu',\eta',\Delta',\varepsilon', S')$ a \textbfit{Hopf algebra morphism} $f:H\rightarrow{H'}$ is both an algebra and a coalgebra morphism satisfying $f\circ S=S'\circ f$.
\end{enumerate}
\end{defn}
\begin{exmp}\label{exmp1}
The group algebra $kG$ has Hopf algebra structure with maps given on the basis $\{v_g\mid g\in G\}$ by
\begin{equation}\Delta(v_g)=v_g\otimes v_g,\quad \varepsilon(v_g)=1, \quad S(v_g)=v_g^{-1}\end{equation}
Similarly, the algebra of functions on $G$, $k(G)$, has the structure of a Hopf algebra with maps defined by
    \begin{equation}\label{algfunc}\Delta(f)(g_1, g_2)=f(g_1g_2),\quad \varepsilon(f)=f(e),\quad (S(f))(g)=f(g^{-1})\end{equation}
    for $f\in k(G)$ and $g,g_1,g_2\in G$, with $e$ the identity element of $G$.
\end{exmp}

\subsection{Duality of Hopf Algebras}\label{hopfdual}

Recall the duality between algebras and coalgebras given in Section \ref{dualityalgcoalg}; we obtain similar results for Hopf algebras. We define the restricted dual $H^\circ$ of a Hopf algebra $H$ in the same way as in Definition \ref{rd}. By \cite[Theorem 9.1.3]{montgomery93}, $H^\circ$ has a Hopf algebra structure
\begin{equation}(H^\circ,\mu_{H^\circ},\eta_{H^\circ},\Delta_{H^\circ},\varepsilon_{H^\circ},S_{H^\circ})=(H^\circ,\Delta^*\restrict{H^\circ\otimes H^\circ},\varepsilon^*,\mu^*\restrict{H^\circ},\eta^*\restrict{H^\circ}, S^*\restrict{H^\circ})\end{equation}
Again, we see that the dual of a finite dimensional Hopf algebra is a Hopf algebra.\\ 

We can extend the concept of a dual pair of vector spaces to a dual pairing of Hopf algebras.

\begin{defn} \cite[Definition I.9.22]{browngoodearl02}
A \textbfit{Hopf pairing} $(\cdot\,,\cdot):H\times U\rightarrow{k}$ of two Hopf algebras $(H,\mu_H,\eta_H,\Delta_H,\varepsilon_H, S_H)$ and $(U,\mu_U,\eta_U,\Delta_U,\varepsilon_U, S_U)$ is a bilinear form with:
\begin{itemize}
    \item $(h,uv)=\sum(h_{(1)},u)(h_{(2)},v)$,
    \item $(fh,u)=\sum(f,u_{(1)})(h,u_{(2)}),$ 
    \item $(1,u)=\varepsilon_U(u)$ and $(h,1)=\varepsilon_H(h),$
    \item $(h,S_Uu)=(S_Hh,u)$
\end{itemize}
for all $u,v\in U$ and $f,h\in H$.
\end{defn}

Suppose that $(\cdot\,,\cdot):H\times U\rightarrow{k}$ is a Hopf pairing. We define the following maps for all $u\in U,h\in H$
\begin{equation}\varphi:U\rightarrow{H^\circ}\subseteq H^*,\quad \phi(u)(h)=(h,u)\end{equation}
\begin{equation}\psi:H\rightarrow{U^\circ}\subseteq U^*,\quad \psi(h)(u)=(h,u)\end{equation}
These maps are Hopf algebra morphisms. If $\varphi,\psi$ are injective then we say that the Hopf pairing is \textit{perfect}. Therefore, if $H$ is a finite dimensional Hopf algebra then the pairing between $H$ and $H^*$ given by $(\cdot,\cdot):H\times H^*:\rightarrow{
k},(h,f)=f(h)$ is a perfect Hopf pairing. Further, if there exists a perfect Hopf pairing between finite dimensional Hopf algebras $U$ and $H$ then $U\simeq H^*$ and $H\simeq U^*$ as Hopf algebras.

\begin{exmp}\label{example2}
We can define a bilinear form $(\cdot,\cdot):kG\times k(G)\rightarrow{k}$ by
\begin{equation}(\sum _{g\in G}a_gv_g,f)=\sum_{g\in G}a_gf(v_g)\end{equation} This is a Hopf pairing which induces an isomorphism of Hopf algebras $\text{Rep}(G)\simeq (kG)^\circ$, where $\text{Rep}(G)$ is the Hopf algebra of representative functions on $G$, see \cite[Example 1.4.12]{timmermann08}
\end{exmp}

Examples \ref{exmp1} and \ref{example2} provide a good starting point for our explorations in the next few sections. Analogous to $k(G)$, we have the ring of polynomial functions $O(G)$ on an algebraic group $G$. If we consider the corresponding Lie algebra $\mathfrak{g}$, then the universal enveloping algebra $U(\mathfrak{g})$ has parallels with $kG$; both have a universal property and translate representation theory into module theory. We will see that $O(G)$ and $U(\mathfrak{g})$ both have Hopf algebra structures and moreover, there is an isomorphism of Hopf algebras $O(G)\simeq U(\mathfrak{g})^\circ$.

\section{The Quantised Universal Enveloping Algebra}\label{quantiseduniversalenvelopingalgebra}

\subsection{The Universal Enveloping Algebra} \label{unienvalgsection}

We first recall the familiar definition of a Lie algebra.
\begin{defn}\cite[p. 1 - 2]{hongkang02}
A \textbfit{Lie algebra} $\mathfrak{g}$ is a vector space $\mathfrak{g}$ equipped with a bilinear map, the \textit{Lie bracket}, such that for all $x,y,z\in \mathfrak{g}$ we have:
\begin{itemize}
    \item (Anticommutativity) $[x,y]=-[y,x]$,
    \item (The Jacobi Identity) $[x,[y,z]]+[y,[z,x]]+[z,[x,y]]=0$.
\end{itemize}
\end{defn}

\begin{exmp}\label{sl2} \cite[p. 99]{kassel95} We denote by $\mathfrak{gl}_2$ the \textbfit{general linear Lie algebra of order $2$}. It has a basis 
\begin{equation}I=\begin{pmatrix} 1 & 0 \\ 0 & 1\end{pmatrix},
\quad 
e=\begin{pmatrix} 0 & 1 \\ 0 & 0\end{pmatrix},
\quad f=\begin{pmatrix} 0 & 0 \\ 1 & 0\end{pmatrix},
\quad h=\begin{pmatrix} 1 & 0 \\ 0 & -1\end{pmatrix}\end{equation}
The Lie bracket is the commutator $[x,y]=xy-yx$ for $x,y\in\mathfrak{gl}_2$. The basis elements satsify the following relations 
\begin{equation}
[e,f]=h,\quad [h,e]=2e, \quad [h,f]=-2f    
\end{equation}
\begin{equation}
[I,e]=[I,f]=[I,h]=0    
\end{equation}

The matrices of trace zero in $\mathfrak{gl}_2$ form a Lie subalgebra with basis $\{e,f,h\}$. This subalgebra is the \textbfit{special linear Lie algebra of order $2$}, $\mathfrak{sl}_2$ . It is easy to show, using the above relations, that there is an isomorphism of Lie algebras
\begin{equation}\mathfrak{gl}_2\simeq\mathfrak{sl}_2\otimes kI\end{equation}
This means that we can investigate many properties of $\mathfrak{gl}_2$ by just studying those of $\mathfrak{sl}_2$. The main examples in this essay will concern $\mathfrak{sl}_2$.
\end{exmp}

The universal enveloping algebra of a Lie algebra is the largest algebra containing all representations of a Lie algebra. The idea is to embed $\mathfrak{g}$ into an algebra $U(\mathfrak{g})$ such that the Lie bracket of $\mathfrak{g}$ corresponds to the commutator $xy-yx$ in $U(\mathfrak{g})$. 

We make a few preliminary definitions. Let $T(\mathfrak{g})$ be the \textit{tensor algebra} of $\mathfrak{g}$ defined by
\begin{equation}
    T(\mathfrak{g})=\sum_{n\geq 0}T^n(\mathfrak{g})
\end{equation}
where $T^0(\mathfrak{g})=k, T^1(\mathfrak{g})=\mathfrak{g}$ and $T^n(\mathfrak{g})=V^{\otimes n}$, and multiplication given by the tensor product 
\begin{equation}
    T^i(\mathfrak{g)}\otimes T^j(\mathfrak{g})\rightarrow{T^{i+j}(\mathfrak{g})}
\end{equation}
Let $I(\mathfrak{g})$ be the two sided ideal of $T(\mathfrak{g})$ generated by all elements of the form
\begin{equation}
x\otimes y-y\otimes x-[x,y]    
\end{equation} 
for $x,y\in\mathfrak{g}$ .

\begin{defn}\cite[Definition 1.7]{ciubotaru18}
The \textbfit{universal enveloping algebra} $(U(\mathfrak{g}),i_\mathfrak{g})$ of a Lie algebra $\mathfrak{g}$ consists of an algebra $U(\mathfrak{g})$ and a linear map $i_\mathfrak{g}:\mathfrak{g}\rightarrow{U(\mathfrak{g})}$. We define
\begin{equation}
    U(\mathfrak{g})=T(\mathfrak{g})/I(\mathfrak{g})
\end{equation}
The map $i_\mathfrak{g}$ is obtained by composing the identity map $\mathfrak{g}\rightarrow{T^1(\mathfrak{g})}$ with the quotient map $T(\mathfrak{g})\rightarrow{U(\mathfrak{g})}$.
\end{defn}

This algebra satisfies a universal property, motivating its name. 
\begin{thm}\cite[Lemma 1.8]{ciubotaru18}
Given an algebra $A$ together with a linear map $\tau:\mathfrak{g}\rightarrow{A}$ satisfying
\begin{equation}
    \tau(x)\tau(y)-\tau(y)\tau(x)=\tau([x,y])
\end{equation}
for all $x,y\in\mathfrak{g}$, there exists a unique algebra morphism $\phi:U(\mathfrak{g})\rightarrow{A}$ such that $\phi(1_{U(\mathfrak{g})})=1_A$ and
\begin{equation}
    \phi\circ i_\mathfrak{g}=\tau
\end{equation}
\end{thm}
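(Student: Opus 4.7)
The plan is a two-step construction: first extend $\tau$ to the free (tensor) algebra, then pass to the quotient. The first step invokes the universal property of the tensor algebra $T(\mathfrak{g})$, which is the free unital associative algebra on the vector space $\mathfrak{g}$. Since $\tau:\mathfrak{g}\to A$ is merely a linear map, it extends uniquely to a unital algebra morphism $\tilde\phi:T(\mathfrak{g})\to A$ determined by $\tilde\phi(x_1\otimes\cdots\otimes x_n)=\tau(x_1)\cdots\tau(x_n)$ together with $\tilde\phi(1)=1_A$.

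The key step is to show that $\tilde\phi$ descends to the quotient $U(\mathfrak{g})=T(\mathfrak{g})/I(\mathfrak{g})$. For this it suffices to check that $\tilde\phi$ annihilates the generators of the two-sided ideal $I(\mathfrak{g})$: a direct computation gives
$$\tilde\phi(x\otimes y - y\otimes x - [x,y]) = \tau(x)\tau(y) - \tau(y)\tau(x) - \tau([x,y]) = 0,$$
which is precisely the hypothesis imposed on $\tau$. Since $\tilde\phi$ is an algebra morphism it then annihilates the entire two-sided ideal generated by these elements, so by the universal property of the quotient of an algebra by a two-sided ideal, $\tilde\phi$ factors uniquely through an algebra morphism $\phi:U(\mathfrak{g})\to A$ with $\phi(1_{U(\mathfrak{g})})=1_A$. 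The relation $\phi\circ i_\mathfrak{g}=\tau$ then follows by unwinding the definition of $i_\mathfrak{g}$ as the composition of $\mathfrak{g}=T^1(\mathfrak{g})\hookrightarrow T(\mathfrak{g})$ with the canonical projection.

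For uniqueness, I would use that $\mathfrak{g}$ generates $T(\mathfrak{g})$ as a unital algebra, so $i_\mathfrak{g}(\mathfrak{g})$ generates $U(\mathfrak{g})$ as a unital algebra; any two unital algebra morphisms $U(\mathfrak{g})\to A$ that agree with $\tau$ on $i_\mathfrak{g}(\mathfrak{g})$ must therefore coincide. There is no genuine obstacle in the argument: the only piece of content is the compatibility check that $\tilde\phi$ kills the defining relators of $I(\mathfrak{g})$, and this is immediate once one notices that the generators of $I(\mathfrak{g})$ were designed precisely to encode the Lie-bracket condition that $\tau$ is assumed to satisfy.
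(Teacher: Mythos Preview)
Your argument is correct and is the standard proof of the universal property of $U(\mathfrak{g})$: extend $\tau$ freely to $T(\mathfrak{g})$, check that the extension kills the generators $x\otimes y - y\otimes x - [x,y]$ of $I(\mathfrak{g})$ (which is exactly the hypothesis on $\tau$), and then factor through the quotient. The uniqueness argument via generation of $U(\mathfrak{g})$ by $i_\mathfrak{g}(\mathfrak{g})$ is also the right one.

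There is nothing to compare against here: the paper states this theorem with a citation to \cite[Lemma 1.8]{ciubotaru18} and gives no proof of its own. Your write-up is precisely the argument one would expect to find behind that citation.
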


Moreover, $U(\mathfrak{g})$ provides us with a link between Lie algebras and Hopf algebras.

\begin{prop}\cite[Section 1.2.9]{timmermann08}
 $U(\mathfrak{g})$ has the structure of a Hopf algebra with structure maps satisfying
\begin{align}
\begin{aligned}
&\Delta(x)& &=x\otimes 1+1\otimes x,\\
& \varepsilon(x)& &=0,\\
& S(x)& &=-x
\end{aligned}
\end{align} for $x\in \mathfrak{g}$
\end{prop}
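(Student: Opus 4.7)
The plan is to define the three structure maps $\Delta$, $\varepsilon$, $S$ using the universal property of $U(\mathfrak{g})$, and then to verify the bialgebra/Hopf axioms on the generating set $\mathfrak{g}\subseteq U(\mathfrak{g})$, extending everywhere by a morphism argument. Recall that the universal property says that any $k$-linear map $\tau:\mathfrak{g}\to A$ into an algebra satisfying $\tau(x)\tau(y)-\tau(y)\tau(x)=\tau([x,y])$ extends uniquely to an algebra morphism $U(\mathfrak{g})\to A$.

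First I would construct $\Delta$ by applying the universal property to the map $\tau_\Delta:\mathfrak{g}\to U(\mathfrak{g})\otimes U(\mathfrak{g})$, $x\mapsto x\otimes 1+1\otimes x$. The verification that $\tau_\Delta$ respects brackets is a short direct expansion in the tensor product algebra, where the cross terms cancel to leave $[x,y]\otimes 1+1\otimes[x,y]$. Similarly, $\varepsilon$ comes from the zero map $\mathfrak{g}\to k$, which trivially satisfies the bracket condition. For the antipode $S$, I would apply the universal property to $\tau_S:\mathfrak{g}\to U(\mathfrak{g})^{\mathrm{op}}$, $x\mapsto -x$; here the sign on the bracket introduced by $\tau_S([x,y])=-[x,y]$ is exactly what matches the opposite multiplication, so the hypothesis holds. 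This delivers $S$ as an anti-algebra morphism $U(\mathfrak{g})\to U(\mathfrak{g})$ with $S(1)=1$.

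Next I would verify the coalgebra and compatibility axioms. Since $\Delta$ and $\varepsilon$ are algebra morphisms by construction, so are $(\Delta\otimes\mathrm{id})\circ\Delta$ and $(\mathrm{id}\otimes\Delta)\circ\Delta$ into $U(\mathfrak{g})^{\otimes 3}$, and the counit maps into $U(\mathfrak{g})$. Two algebra morphisms that agree on the generating set $\mathfrak{g}\cup\{1\}$ agree on the whole algebra $U(\mathfrak{g})=T(\mathfrak{g})/I(\mathfrak{g})$. Thus it suffices to check coassociativity and the counit identity on $x\in\mathfrak{g}$, which are one-line computations using $\Delta(x)=x\otimes 1+1\otimes x$.

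The antipode axiom $\mu\circ(S\otimes\mathrm{id})\circ\Delta=\mu\circ(\mathrm{id}\otimes S)\circ\Delta=\eta\circ\varepsilon$ is the slightly subtler step, and is the only place where routine morphism-matching does not apply directly, because $S\star\mathrm{id}$ is not an algebra morphism a priori. To finish, I would let $B\subseteq U(\mathfrak{g})$ be the set of elements on which both antipode identities hold. On generators $x\in\mathfrak{g}$ one computes $\mu(S\otimes\mathrm{id})\Delta(x)=S(x)\cdot 1+S(1)\cdot x=-x+x=0=\eta(\varepsilon(x))$, and symmetrically for the other side, so $\mathfrak{g}\cup\{1\}\subseteq B$. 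A short Sweedler-notation calculation, exploiting that $S$ is an anti-algebra morphism and that $\Delta,\varepsilon$ are algebra morphisms (both already established), shows that if $h,h'\in B$ then $hh'\in B$: for instance, $\mu(\mathrm{id}\otimes S)\Delta(hh')=\sum h_{(1)}h'_{(1)}S(h'_{(2)})S(h_{(2)})=\varepsilon(h')\sum h_{(1)}S(h_{(2)})=\varepsilon(h)\varepsilon(h')=\varepsilon(hh')$. Hence $B$ is a subalgebra containing a generating set, so $B=U(\mathfrak{g})$, completing the proof.
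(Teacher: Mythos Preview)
Your proof is correct and follows the same strategy as the paper: define $\Delta$, $\varepsilon$, $S$ on $\mathfrak{g}$, verify the bracket compatibility, and extend by the universal property. The paper stops at ``we can then check that these maps satisfy the relations in Definition~\ref{hopf}'', whereas you actually carry out that check---in particular, your subalgebra argument for the antipode axiom (noting that $S\star\mathrm{id}$ is not a priori an algebra morphism) fills in exactly the detail the paper omits.
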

\begin{proof}
We consider the linear maps 
\begin{align}
\begin{aligned}
&\Delta :\mathfrak{g}\rightarrow{U(\mathfrak{g})\otimes U(\mathfrak{g})}, \quad &  &\varepsilon :\mathfrak{g}\rightarrow{k},\quad  &  &S :\mathfrak{g}\rightarrow{\mathfrak{g}}, \\
&\Delta(x)=x\otimes 1+1\otimes x,\quad  & &\varepsilon(x)=0,\quad  & &S(x)=-x,
\end{aligned}
\end{align}
These maps satisfy

\begin{equation}
\begin{split}
[\Delta(x),\Delta(y)] &=[x\otimes 1+1\otimes x,y\otimes 1+1\otimes y]=[x,y]\otimes 1+1\otimes [x,y]=\Delta([x,y]),\\[18pt]
[\varepsilon(x),\varepsilon(y)] &= 0 =\varepsilon([x,y]),\\[18pt]
[S(x),S(y)] &=[-x,-y]^{op}=yx-xy=-[x,y]=S([x,y])
\end{split}
\end{equation}

for all $x,y\in\mathfrak{g}$. By the universal property, these can be extended to the algebra morphisms
\begin{equation}\Delta:U(\mathfrak{g})\rightarrow{U(\mathfrak{g})\otimes U(\mathfrak{g})},\quad \varepsilon:U(\mathfrak{g})\rightarrow{k},\quad S:U(\mathfrak{g})\rightarrow{U(\mathfrak{g})^{op}}\end{equation}
We can then check that these maps satisfy the relations in Definition \ref{hopf}.
\end{proof}

In \cite{ciubotaru18}, a finite dimensional Lie algebra is defined to be \textit{semisimple} if it has zero radical. Suppose that we have a finite dimensional complex semisimple Lie algebra $\mathfrak{g}$. Let $\Pi\subseteq\Phi^+$ be the set of simple roots forming a basis of the root system $\Phi$ with respect to a fixed Cartan subalgebra $\mathfrak{h}$ of $\mathfrak{g}$. The Cartan decomposition of $\mathfrak{g}$ is:
\begin{equation}\label{cartandecomp}
    \mathfrak{g}=\mathfrak{h}\bigoplus_{\alpha\in\Phi}\mathfrak{g}_\alpha, \quad \text{ where } \mathfrak{g}_\alpha=\{x\in\mathfrak{g}\mid [h,x]=\alpha(h)x,\quad h\in\mathfrak{h}\}
\end{equation}

We can associate $\mathfrak{g}$ with a matrix.

\begin{defn}
The \textbfit{Cartan Matrix} of $\mathfrak{g}$ is the matrix $C=(a_{\alpha\beta})$ with
\begin{equation}
a_{\alpha\beta}=\frac{2(\alpha,\beta)}{(\alpha,\alpha)}    
\end{equation}
for all $\alpha,\beta\in\Pi$. Here, the map $(\cdot,\cdot):\mathfrak{g}\times\mathfrak{g}\rightarrow{\mathbb{C}}$ is defined by $(x,y)=\textrm{tr}(\text{ad}x\circ\text{ad}y)$, where $\textrm{ad}x:y\rightarrow{[x,y]}$ for $x,y\in\mathfrak{g}$, and is called the \textit{Killing form} of $\mathfrak{g}$.
\end{defn}

We give Serre's presentation of the universal enveloping algebra $U(\mathfrak{g})$ of $\mathfrak{g}$ over $\mathbb{C}$, which will motivate our definition of $U_q(\mathfrak{g})$ in Section \ref{quantunienvelsection}.
\begin{prop}\cite[p. 44]{browngoodearl02} \cite[Appendix]{serre87} \label{unienvalgserre} $U(\mathfrak{g})$ is the $\mathbb{C}$-algebra with generators $\{e_\alpha,f_\alpha,h_\alpha\}_{\alpha\in\Pi}$, satisfying the relations
\begin{equation}[h_\alpha,h_\beta]=0,\quad [e_\alpha,f_\beta]=\delta_{\alpha\beta}h_\alpha,\quad [h_\alpha,e_\beta]=a_{\alpha\beta}e_\beta,\quad [h_\alpha,f_\beta]=-a_{\alpha\beta}f_\beta\end{equation}
and for $\alpha\neq \beta$, the Serre Relations, 
\begin{equation}
    \begin{split}
\sum_{k=0}^{1-a_{\alpha\beta}}(-1)^k\binom{1-a_{\alpha\beta}}{k}e_\alpha^{1-a_{\alpha\beta}-k}e_\beta e_\alpha^k&=0 \\
\sum_{k=0}^{1-a_{\alpha\beta}}(-1)^k\binom{1-a_{\alpha\beta}}{k}f_\alpha^{1-a_{\alpha\beta}-k}f_\beta f_\alpha^k&=0
    \end{split}
\end{equation}
\end{prop}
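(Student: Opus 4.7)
The plan is to reduce the statement to Serre's presentation of the Lie algebra $\mathfrak{g}$ itself and then transport the result to $U(\mathfrak{g})$ via the universal property established earlier in Section~\ref{unienvalgsection}. Let $\widetilde{U}$ denote the $\mathbb{C}$-algebra presented by the generators and relations in the statement; I want to construct an isomorphism $\widetilde{U} \simeq U(\mathfrak{g})$.

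First I would fix Chevalley generators in $\mathfrak{g}$: for each $\alpha \in \Pi$ choose $e_\alpha \in \mathfrak{g}_\alpha$ and $f_\alpha \in \mathfrak{g}_{-\alpha}$, normalised via the Killing form so that $h_\alpha := [e_\alpha, f_\alpha] \in \mathfrak{h}$ satisfies $\alpha(h_\alpha) = 2$. Using the Cartan decomposition~\eqref{cartandecomp} and the definition of the Cartan matrix, one checks directly that the elements $i_{\mathfrak{g}}(e_\alpha), i_{\mathfrak{g}}(f_\alpha), i_{\mathfrak{g}}(h_\alpha) \in U(\mathfrak{g})$ satisfy the Cartan-type relations $[h_\alpha, h_\beta] = 0$, $[e_\alpha, f_\beta] = \delta_{\alpha\beta} h_\alpha$, and $[h_\alpha, e_\beta^{\pm}] = \pm a_{\alpha\beta} e_\beta^{\pm}$, recalling that the Lie bracket in $\mathfrak{g}$ becomes the commutator in $U(\mathfrak{g})$. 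The Serre relations themselves require a short representation-theoretic argument: viewing $\{e_\alpha, f_\alpha, h_\alpha\}$ as an $\mathfrak{sl}_2$-triple and analysing the $\alpha$-string through $\beta$, one shows that $(\operatorname{ad} e_\alpha)^{1 - a_{\alpha\beta}}(e_\beta) = 0$ in $\mathfrak{g}$, which expands (using the Hopf algebra identity $\operatorname{ad} x \cdot y = \sum x_{(1)} y S(x_{(2)})$ specialised to primitive elements, or simply by induction) to the displayed Serre identity in $U(\mathfrak{g})$; the argument for the $f$'s is symmetric.

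These verifications give a well-defined algebra homomorphism $\varphi : \widetilde{U} \to U(\mathfrak{g})$ sending each formal generator to its namesake in $\mathfrak{g} \subset U(\mathfrak{g})$. Surjectivity is the easy direction: by Serre's theorem, $\mathfrak{g}$ is generated as a Lie algebra by the Chevalley generators, hence $U(\mathfrak{g})$ is generated as an algebra by them, so $\varphi$ is onto.

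The main obstacle is injectivity, which is the substantive content of Serre's theorem and where I would expect to spend the bulk of the effort. The standard strategy is to equip $\widetilde{U}$ with the commutator bracket and consider the Lie subalgebra $\widetilde{\mathfrak{g}} \subseteq \widetilde{U}$ generated by the images of $e_\alpha, f_\alpha, h_\alpha$; one then shows that $\widetilde{\mathfrak{g}}$ is finite-dimensional and that the obvious Lie algebra map $\mathfrak{g} \to \widetilde{\mathfrak{g}}$ is an isomorphism. Granting this (proved, for instance, in the appendix of~\cite{serre87} by a careful triangular decomposition argument together with a representation-theoretic cancellation showing no further relations survive beyond Serre's), the universal property of $U(\mathfrak{g})$ applied to the inclusion $\mathfrak{g} \simeq \widetilde{\mathfrak{g}} \hookrightarrow \widetilde{U}$ produces an algebra map $U(\mathfrak{g}) \to \widetilde{U}$ that is inverse to $\varphi$, completing the proof. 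The delicate point throughout is verifying that the triangular decomposition of $\widetilde{U}$ matches that of $U(\mathfrak{g})$ at the level of the Lie subalgebra; a PBW-type basis argument in $\widetilde{U}$ is what ultimately carries the day.
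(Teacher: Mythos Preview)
Your sketch is a correct outline of the standard proof of Serre's presentation, and it correctly identifies where the real work lies (injectivity, via the triangular decomposition and the Lie-algebraic version of Serre's theorem). However, the paper does not actually prove this proposition at all: it simply states the result and defers to the cited references \cite[p.~44]{browngoodearl02} and \cite[Appendix]{serre87}. So there is nothing to compare your argument against beyond noting that the approach you describe is essentially the one carried out in Serre's appendix, which is precisely what the paper is citing. Your proposal is thus more than the paper itself offers, and is accurate as a summary of the referenced proof.
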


Let $U^+(\mathfrak{g}),U^0(\mathfrak{g}),U^-(\mathfrak{g})$ be the subalgebras of $U(\mathfrak{g})$ generated by $\{e_\alpha\}_{\alpha\in\Pi}$,  $\{h_\alpha\}_{\alpha\in\Pi}$, $\{f_\alpha\}_{\alpha\in\Pi}$ respectively. We state the following result.
\begin{prop}\cite[Proposition 2.1.7]{hongkang02}\label{triangulardecomposition}
$U(\mathfrak{g})$ has the \textit{triangular decomposition} 
\begin{equation}
    U(\mathfrak{g})\simeq U^+(\mathfrak{g})\otimes U^0(\mathfrak{g})\otimes U^-(\mathfrak{g})
\end{equation}
\end{prop}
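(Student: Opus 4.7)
The plan is to deduce the triangular decomposition from the Poincar\'e--Birkhoff--Witt (PBW) theorem applied to the Cartan decomposition (\ref{cartandecomp}). Writing $\mathfrak{n}^\pm = \bigoplus_{\alpha\in\Phi^\pm}\mathfrak{g}_\alpha$, we have $\mathfrak{g}=\mathfrak{n}^-\oplus\mathfrak{h}\oplus\mathfrak{n}^+$ as vector spaces, and each summand is a Lie subalgebra of $\mathfrak{g}$. The map whose bijectivity we want is the multiplication map
\begin{equation*}
m \colon U^+(\mathfrak{g})\otimes U^0(\mathfrak{g})\otimes U^-(\mathfrak{g}) \longrightarrow U(\mathfrak{g}),\qquad x\otimes y\otimes z \longmapsto xyz,
\end{equation*}
where we suppress the embedding $i_\mathfrak{g}$ on the right-hand side.

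First I would identify the subalgebras $U^+(\mathfrak{g}),U^0(\mathfrak{g}),U^-(\mathfrak{g})$ defined in Proposition \ref{unienvalgserre} with the universal enveloping algebras $U(\mathfrak{n}^+),U(\mathfrak{h}),U(\mathfrak{n}^-)$ respectively. For $U^0$ this is immediate, since $\mathfrak{h}$ is abelian with basis $\{h_\alpha\}_{\alpha\in\Pi}$, so $U^0(\mathfrak{g})$ is the polynomial algebra on these generators, which is exactly $U(\mathfrak{h})$. For $U^\pm$ the essential structural input is that every root vector attached to a non-simple positive root can be realised as an iterated Lie bracket of simple root vectors $e_\alpha$; this is standard root-string theory for semisimple Lie algebras. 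Since brackets in $\mathfrak{g}$ become commutators in $U(\mathfrak{g})$, the subalgebra generated by $\{e_\alpha\}_{\alpha\in\Pi}$ already contains the image of all of $\mathfrak{n}^+$, and by the universal property of $U(\mathfrak{n}^+)$ (applied to the inclusion $\mathfrak{n}^+\hookrightarrow U(\mathfrak{g})$) the two are canonically isomorphic.

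Next I would invoke PBW. Choose an ordered basis of $\mathfrak{g}$ of the form $(x_1<\cdots<x_p<h_1<\cdots<h_r<y_1<\cdots<y_q)$, where the $x_i$ form a basis of $\mathfrak{n}^+$, the $h_j$ a basis of $\mathfrak{h}$, and the $y_k$ a basis of $\mathfrak{n}^-$. By the PBW theorem the ordered monomials in these basis vectors form a $k$-basis of $U(\mathfrak{g})$, and each such monomial factors uniquely as a product (ordered $x$-monomial)$\cdot$(ordered $h$-monomial)$\cdot$(ordered $y$-monomial). Applying PBW separately inside each of the subalgebras $\mathfrak{n}^+,\mathfrak{h},\mathfrak{n}^-$, the ordered $x$-monomials, $h$-monomials and $y$-monomials form bases of $U(\mathfrak{n}^+),U(\mathfrak{h}),U(\mathfrak{n}^-)$ respectively. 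Their tensor products therefore form a basis of $U^+(\mathfrak{g})\otimes U^0(\mathfrak{g})\otimes U^-(\mathfrak{g})$, and $m$ sends this basis bijectively onto the PBW basis of $U(\mathfrak{g})$, giving the desired linear isomorphism.

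The main obstacle is really PBW itself, which is used here as a black box; its standard proof requires constructing a faithful action of $U(\mathfrak{g})$ on a suitable polynomial-type module in order to rule out unwanted relations among ordered monomials. A secondary subtlety is that the asserted isomorphism is only of vector spaces rather than algebras: the fixed ordering of the three tensor factors matters, and permuting them would yield a different (but still bijective) map, reflecting the non-commutativity of $U(\mathfrak{g})$.
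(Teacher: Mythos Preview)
Your argument is correct and is the standard PBW-based proof. Note, however, that the paper does not actually supply its own proof of this proposition; it merely states the result and cites \cite[Proposition 2.1.7]{hongkang02}, where the argument is indeed the one you outline. One small point worth tightening: when you identify $U^+(\mathfrak{g})$ with $U(\mathfrak{n}^+)$, the universal property only gives you a surjection $U(\mathfrak{n}^+)\twoheadrightarrow U^+(\mathfrak{g})$; injectivity is again a consequence of PBW (the inclusion $\mathfrak{n}^+\hookrightarrow\mathfrak{g}$ induces an injection $U(\mathfrak{n}^+)\hookrightarrow U(\mathfrak{g})$ because a PBW basis of $U(\mathfrak{n}^+)$ extends to one of $U(\mathfrak{g})$), so you may as well fold that step into your main PBW argument rather than treating it separately.
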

\begin{defn}\label{borelsub}
The \textbfit{positive Borel subalgebra} $U^{\geq 0}(\mathfrak{g})$ is the subalgebra of $U_q(\mathfrak{g})$ generated by $\{e_\alpha,h_\alpha\}_{\alpha\in\Pi}$. The \textbfit{negative Borel subalgebra} $U^{\leq 0}(\mathfrak{g})$ is generated by $\{f_\alpha,h_\alpha\}_{\alpha\in\Pi}$.
\end{defn}
\begin{exmp}\label{usl2}
We now consider the case when $\mathfrak{g}=\mathfrak{sl}_2(\mathbb{C})$, as in Example \ref{sl2}. With respect to the Cartan subalgebra $\mathfrak{h}=\mathbb{C}h$, the simple root is $\alpha:\mathfrak{h}\rightarrow{\mathbb{C}}$ with $\alpha(h)=2$. The Cartan matrix is $(2)$.  
$U(\mathfrak{sl}_2)$ is the $\mathbb{C}$-algebra generated by $e,f,h$ satisfying the relations 
\begin{equation}[e,f]=h,\quad [h,e]=2e,\quad [h,f]=-2f\end{equation}

\end{exmp}
\begin{remark}
We note that the subalgebra of $U(\mathfrak{g})$ generated by $\{e_\alpha,f_\alpha,h_\alpha\}$ is isomorphic to $U(\mathfrak{sl}_2)$.
\end{remark}

\subsection{The Quantised Universal Enveloping Algebra}\label{quantunienvelsection}

In Proposition \ref{unienvalgserre}, we gave a presentation for $U(\mathfrak{g})$ over $\mathbb{C}$. We can, however, define its `quantisation' $U_q(\mathfrak{g})$ over our field $k$ since the construction is only based on the Cartan matrix $(a_{\alpha\beta})$ of $\mathfrak{g}$. Let $q_\alpha=q^{\frac{(\alpha,\alpha)}{2}}$ for $\alpha\in\Pi$ and assume that $q_\alpha\neq \pm 1$. 

\begin{defn} \cite[Section I.6.3]{browngoodearl02}\label{quantenvalg}
The \textbfit{quantised universal enveloping algebra} $U_q(\mathfrak{g})$ is defined to be the algebra generated by $\{E_\alpha,F_\alpha,K_\alpha,K_\alpha^{-1}\}_{\alpha\in\Pi}$ satisfying the relations
\begin{align}
\begin{aligned}
K_\alpha K_\alpha^{-1}&=K_\alpha^{-1}K_\alpha=1, &\quad K_\alpha K_\beta &=K_\beta K_\alpha, \\
K_\alpha E_\beta K_\alpha^{-1}&=q_\alpha^{a_{\alpha \beta }}E_\beta ,&\quad K_\alpha F_\beta K_\alpha ^{-1}&=q_\alpha ^{-a_{\alpha\beta }}F_\beta, 
\end{aligned}
\end{align}
\begin{equation*}
E_\alpha F_\beta -F_\beta E_\alpha =\delta_{\alpha \beta }\frac{K_\alpha -K_\alpha ^{-1}}{q_\alpha -q_\alpha ^{-1}}    
\end{equation*}
And, for $\alpha \neq \beta $, the \textit{quantum Serre relations}
\begin{equation}
    \begin{split}
     \sum_{k=0}^{1-a_{\alpha \beta }}(-1)^k
    { {1-a_{\alpha \beta }}\brack k}_{q_\alpha }E_\alpha ^{1-a_{\alpha \beta }-k}E_\beta E_\alpha ^k&=0,\\
    \sum_{k=0}^{1-a_{\alpha \beta }}(-1)^k
    { {1-a_{\alpha \beta }}\brack k}_{q_\alpha }F_\alpha ^{1-a_{\alpha\beta }-k}F_\beta F_\alpha ^k&=0 
    \end{split}
\end{equation}
where
${ m\brack n}_{q_\alpha }=\frac{[m]_{q_\alpha }!}{[n]_{q_\alpha }![m-n]_{q_\alpha }!}$, with $[m]_{q_\alpha }!=\prod_{k=1}^m[k]_{q_\alpha }$, $[k]_{q_\alpha }=\frac{q_\alpha ^k-q_\alpha ^{-k}}{q_\alpha -q_\alpha ^{-1}}$.

\end{defn}

If we compare this definition with the classical case, we see that the classical Serre relations are very similar to the quantum Serre relations with binomial coefficients replaced with `$q$-binomial coefficients' ${ m\brack n}_{q_\alpha }$. Following our remarks in the Introduction, we see that we have introduced a parameter $q$ which has introduced some non-commutativity and we would expect that as $q\rightarrow{1}$ we get our classical algebra back. We illustrate this in the following example.

\begin{exmp}\label{Uq(sl2)}
We now return to the case of $\mathfrak{g=sl_2}$. Since $(a_{\alpha\beta})=(2)$, it follows that $U_q(\mathfrak{sl_2})$ is the algebra generated by $E,F,K,K^{-1}$ subject to the relations
\begin{equation}KK^{-1}=K^{-1}K=1,\quad KEK^{-1}=q^2E,\quad KFK^{-1}=q^{-2}F\end{equation}
and, \begin{equation}EF-FE=\frac{K-K^{-1}}{q-q^{-1}}\end{equation}
We would expect there to be a connection between $U_q(\mathfrak{sl}_2)$ and $U(\mathfrak{sl}_2)$ on taking the limit as $q\rightarrow{1}$. Denote by $U_q'(\mathfrak{sl}_2)$ the following algebra generated by the variables $E,F,K,K^{-1},L$ subject to the following relations
\begin{align}
\begin{aligned}
&KK^{-1}=K^{-1}K=1,\quad KEK^{-1}=q^2E,\quad KFK^{-1}=q^{-2}F\\
&[E,F]=L,\quad (q-q^{-1})L=K-K^{-1}\\
&[L,E]=q(EK+K^{-1}E),\quad [L,F]=-q^{-1}(FK+K^{-1}F)
\end{aligned}
\end{align}
where $[\cdot,\cdot]$ is the commutator. We see that the algebra $U_q'(\mathfrak{sl}_2)$ is defined for all values of $q$, in particular $q=1$.
\begin{prop}\cite[Proposition VI.2.1, Proposition VI.2.1]{kassel95}
The algebra $U_q(\mathfrak{sl}_2)$ is isomorphic to the algebra $U_q'(\mathfrak{sl}_2)$. Moreover, when $q=1$, we have that
\begin{equation}U_1'(\mathfrak{sl}_2)\simeq U(\mathfrak{sl}_2)[K]/{\langle{K^2-1}\rangle}\end{equation}
\end{prop}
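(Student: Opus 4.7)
The plan is to prove the two claims in turn.

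For the first isomorphism $U_q(\mathfrak{sl}_2) \simeq U_q'(\mathfrak{sl}_2)$, the key observation is that because $q$ is not a root of unity, $q - q^{-1}$ is a nonzero scalar in $k$ and hence invertible. The defining relation $(q-q^{-1})L = K - K^{-1}$ of $U_q'(\mathfrak{sl}_2)$ therefore pins down $L = (K-K^{-1})/(q-q^{-1})$, making $L$ a redundant generator. I would construct explicit mutually inverse algebra morphisms: $\phi: U_q(\mathfrak{sl}_2) \to U_q'(\mathfrak{sl}_2)$ sending the four generators to themselves, and $\psi: U_q'(\mathfrak{sl}_2) \to U_q(\mathfrak{sl}_2)$ sending $E, F, K, K^{-1}$ to themselves and $L \mapsto (K-K^{-1})/(q-q^{-1})$. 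Well-definedness of $\phi$ is immediate once the $U_q'$-relations give $[E, F] = L = (K-K^{-1})/(q-q^{-1})$. For $\psi$, the substantive verification is that the relations $[L, E] = q(EK + K^{-1}E)$ and $[L, F] = -q^{-1}(FK + K^{-1}F)$ hold after substituting the formula for $L$; this is a short calculation using $KE = q^{2} EK$ and $K^{-1} E = q^{-2} EK^{-1}$, in which the scalar $(q^{2} - 1)/(q - q^{-1})$ telescopes to $q$. The composites $\phi \circ \psi$ and $\psi \circ \phi$ then agree with the identity on generators by construction.

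For the second isomorphism $U_1'(\mathfrak{sl}_2) \simeq U(\mathfrak{sl}_2)[K]/\langle K^2-1\rangle$, I first read off how the $U_q'$-relations collapse at $q = 1$. The scaling relations $KEK^{-1} = E$ and $KFK^{-1} = F$ force $K$ to commute with $E$ and $F$, hence with $L = [E,F]$, so $K$ becomes central; the relation $(q-q^{-1})L = K - K^{-1}$ becomes $K^2 = 1$; and the remaining relations reduce to $[E,F] = L$, $[L,E] = 2EK$, $[L,F] = -2FK$. The naive assignment $E \mapsto e, F \mapsto f, K \mapsto K, L \mapsto h$ fails because it would force $[h, e] = 2eK$, contradicting $[h, e] = 2e$ in $U(\mathfrak{sl}_2)$. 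The remedy is to absorb the stray $K$ by twisting: I would define $\phi: U_1'(\mathfrak{sl}_2) \to U(\mathfrak{sl}_2)[K]/\langle K^2-1\rangle$ by $E \mapsto e, F \mapsto Kf, K \mapsto K, L \mapsto Kh$, with inverse $\psi$ sending $e \mapsto E, f \mapsto KF, h \mapsto KL, K \mapsto K$. Checking well-definedness then collapses to a handful of short computations using the centrality of $K$ and the identity $K^2 = 1$; for instance $[E, Kf] = K[e, f] = Kh$ realises $[E,F] = L$, and $[Kh, e] = 2Ke = 2EK$ realises $[L, E] = 2EK$. The composites $\psi \circ \phi$ and $\phi \circ \psi$ reduce to the identity via $K^2 = 1$.

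The main obstacle is discovering the correct twist in Part 2: without it, the extra $K$ in $[L, E] = 2EK$ cannot be reconciled with $[h, e] = 2e$. Once the twist $F \mapsto Kf$, $L \mapsto Kh$ is in hand, every remaining check is routine, using only that $K$ is central and involutive.
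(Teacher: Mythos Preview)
Your proposal is correct. The paper itself does not give a proof of this proposition; it merely states the result and cites Kassel, so there is nothing to compare against at the level of the paper. Your argument is the standard one for this statement: for Part~1 you exploit the invertibility of $q-q^{-1}$ to eliminate $L$, and for Part~2 you observe the naive map fails and introduce the twist $F\mapsto Kf$, $L\mapsto Kh$ to absorb the stray central involution $K$. All of the verifications you sketch go through as stated; in particular the reduction of $[L,E]$ and $[L,F]$ under $\psi$ in Part~1 is exactly the computation $(q^{2}-1)/(q-q^{-1})=q$ and $(1-q^{-2})/(q-q^{-1})=q^{-1}$, and in Part~2 the check $[Kh,Kf]=K^{2}[h,f]=-2f=-2K^{2}f=\phi(-2FK)$ closes the loop. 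This is essentially the argument Kassel gives in the cited reference.
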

Therefore, in some sense, we get $U(\mathfrak{sl}_2)$ back as $q\rightarrow{1}$.
\end{exmp}

\begin{remark}
For a more general discussion of the connections between $U_q(\mathfrak{g})$ and $U(\mathfrak{g})$ see \cite[Theorem 3.4.4]{hongkang02} which considers the classical limit of $U_q(\mathfrak{g})$ as $q\rightarrow{1}$.
\end{remark}

We highlight some parallels with the classical case.
\begin{defn}\label{uqasl2}
For each $\alpha\in\Pi$ denote by $U_q^\alpha(\mathfrak{g})$ the subalgebra of $U_q(\mathfrak{g})$ generated by  $E_\alpha,F_\alpha,K_\alpha,K_\alpha^{-1}$. This subalgebra is isomorphic to $U_{q_\alpha}(\mathfrak{sl}_2)$.
\end{defn}

Let $U_q^+(\mathfrak{g})$, $U_q^0(\mathfrak{g})$, $U_q^-(\mathfrak{g})$ be the subalgebras of $U_q(\mathfrak{g})$ generated by $\{E_\alpha\}_{\alpha\in\Pi}$, $\{K_\alpha,K_\alpha^{-1}\}_{\alpha\in\Pi}$, $\{F_\alpha\}_{\alpha\in\Pi}$ respectively. As in the classical case, discussed in Proposition \ref{triangulardecomposition}, we have a triangular decomposition of $U_q(\mathfrak{g})$.

\begin{prop}\label{triangulardecomp}\cite[Theorem 3.15]{hongkang02}
$U_q(\mathfrak{g})$ has the triangular decomposition \begin{equation}U_q(\mathfrak{g})\simeq U_q^-(\mathfrak{g)}\otimes U_q^0(\mathfrak{g})\otimes U_q^+(\mathfrak{g)} \end{equation}
\end{prop}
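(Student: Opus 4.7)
The plan is to exhibit the multiplication map
\[
m : U_q^-(\mathfrak{g}) \otimes U_q^0(\mathfrak{g}) \otimes U_q^+(\mathfrak{g}) \longrightarrow U_q(\mathfrak{g}), \qquad F \otimes K \otimes E \longmapsto FKE,
\]
as an isomorphism of vector spaces. Note that $m$ is not claimed to be an algebra morphism since the tensor product on the left carries its factor-wise algebra structure; the content is a PBW-type statement.

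For surjectivity, I would argue that every monomial in the generators $E_\alpha, F_\alpha, K_\alpha^{\pm 1}$ can be rewritten as a $k$-linear combination of products of the form $F\,K\,E$ with $F \in U_q^-(\mathfrak{g})$, $K \in U_q^0(\mathfrak{g})$, $E \in U_q^+(\mathfrak{g})$. The mixed relations in Definition \ref{quantenvalg} make this straightforward: the relation $K_\alpha E_\beta K_\alpha^{-1} = q_\alpha^{a_{\alpha\beta}} E_\beta$ lets me move any $K_\alpha^{\pm 1}$ past any $E_\beta$ at the cost of a scalar, and symmetrically for the $F$'s; the commutator relation $E_\alpha F_\beta - F_\beta E_\alpha = \delta_{\alpha\beta}(K_\alpha - K_\alpha^{-1})/(q_\alpha - q_\alpha^{-1})$ lets me move each $F_\beta$ to the left past each $E_\alpha$ at the cost of a term in $U_q^0(\mathfrak{g})$, thereby reducing the total number of crossings. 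An induction on the length and on the number of $EF$-crossings delivers the normal form, hence the surjectivity of $m$.

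Injectivity is the genuine obstacle and the step I would expect to spend the most effort on. The difficulty is that the quantum Serre relations prevent us from picking arbitrary ordered monomials in the $E_\alpha$'s (resp.\ $F_\alpha$'s) as a basis of $U_q^+(\mathfrak{g})$ (resp.\ $U_q^-(\mathfrak{g})$), so one must already know that $U_q^\pm(\mathfrak{g})$ have well-defined bases indexed by something like the positive part of the root lattice. My plan would be to invoke (or construct) a PBW-type basis for $U_q^\pm(\mathfrak{g})$ via Lusztig's braid group action on $U_q(\mathfrak{g})$ associated with a reduced expression of the longest Weyl-group element, which produces linearly independent ordered monomials in quantum root vectors. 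Granted such a basis, together with the obvious basis $\{K_\alpha^{n_\alpha}\}$ of $U_q^0(\mathfrak{g})$, one obtains a spanning set of $U_q(\mathfrak{g})$ of the form $F\,K\,E$ whose cardinality matches the tensor product of the three bases, so no linear relation among the $F\,K\,E$'s is possible.

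An alternative route, which avoids quoting the braid group machinery, would be to produce a faithful $U_q(\mathfrak{g})$-module on which the would-be PBW basis acts by linearly independent operators; a Verma module $M(\lambda)$ of sufficiently generic highest weight $\lambda$ is the standard choice, since its action of $U_q^-(\mathfrak{g})$ on a highest weight vector is free, and the weight-space decomposition separates the contributions of $U_q^0(\mathfrak{g})$ and $U_q^+(\mathfrak{g})$. Either way, the crux is producing independent evidence that the quantum Serre relations do not collapse $U_q^\pm(\mathfrak{g})$ beyond what happens classically; everything else is bookkeeping with the commutation relations.
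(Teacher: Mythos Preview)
The paper does not supply its own proof of this proposition; it simply quotes the result from \cite[Theorem 3.15]{hongkang02}. So there is no in-paper argument to compare against, and your proposal should be read as a sketch of how the cited proof might go.

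Your surjectivity argument is correct and standard. For injectivity, however, both routes you propose risk circularity. Lusztig's braid-group construction of quantum root vectors and the resulting PBW basis of $U_q^\pm(\mathfrak{g})$ is normally developed \emph{after} the triangular decomposition is in hand (one needs to know that $U_q^+(\mathfrak{g})$ sits inside $U_q(\mathfrak{g})$ as expected before comparing dimensions of weight spaces). Likewise, the claim that $U_q^-(\mathfrak{g})$ acts freely on a highest-weight vector of a Verma module $M(\lambda)=U_q(\mathfrak{g})\otimes_{U_q^{\geq 0}(\mathfrak{g})}k_\lambda$ is essentially a restatement of the triangular decomposition: one usually proves $M(\lambda)\cong U_q^-(\mathfrak{g})$ as a vector space \emph{using} the decomposition, not the other way round.

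The argument in Hong--Kang (and in Jantzen, Lusztig) avoids this by working in two stages. One first proves the triangular decomposition for the algebra $\widetilde{U}_q(\mathfrak{g})$ defined by the same generators and relations \emph{without} the quantum Serre relations; here $\widetilde{U}_q^\pm(\mathfrak{g})$ are free algebras, and injectivity is obtained by writing down an explicit representation (for instance on a tensor product of a free algebra with a Laurent polynomial ring) on which the ordered monomials act independently. One then checks that the two-sided ideal generated by the quantum Serre elements is already generated, as a vector space, by its intersections with the three factors, so the decomposition passes to the quotient $U_q(\mathfrak{g})$. If you want a self-contained proof, this is the line to follow; your proposed shortcuts presuppose the result.
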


\begin{defn}\label{borelsubquantum}
The \textbfit{positive Borel subalgebra} $U^{\geq 0}_q(\mathfrak{g})$ is the subalgebra of $U_q(\mathfrak{g})$ generated by $\{E_\alpha,K_\alpha,K_\alpha^{-1}\}_{\alpha\in\Pi}$ and the \textbfit{negative Borel subalgebra} $U^{\leq 0}_q(\mathfrak{g})$ is generated by $\{F_\alpha,K_\alpha,K_\alpha^{-1}\}_{\alpha\in\Pi}$.
\end{defn}

One of the most important properties of quantum groups is that they have a Hopf algebra structure. $U_q(\mathfrak{sl}_2)$ has the following Hopf algebra structure, where the structure maps are defined on the generators. The maps $\Delta$ and $\varepsilon$ extend to algebra homomorphisms, and $S$ extends to an algebra anti-homomorphism.
\begin{lemma}\cite[Chapter 3]{jantzen96}
$U_q(\mathfrak{sl}_2)$ has a Hopf algebra structure defined by
\begin{align}
\begin{aligned}
&\Delta(K)& &=K \otimes K,\\  
&\Delta(E)& &=E\otimes 1+K\otimes E,\quad \Delta(F)=F\otimes K^{-1} +1\otimes F,\\
&\varepsilon(K)& &=1,\quad \varepsilon(E)=\varepsilon(F)=0,\\
&S(K)& &=K^{-1},\quad S(E)=-K^{-1}E,\quad S(F)=-FK
\end{aligned}
\end{align}
\end{lemma}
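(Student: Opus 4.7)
The plan is to exploit the presentation of $U_q(\mathfrak{sl}_2)$ by generators and relations given in Example \ref{Uq(sl2)}. By the universal property of the free algebra modulo these relations, it suffices to define $\Delta$ and $\varepsilon$ on the generators $E, F, K, K^{-1}$ and verify that their images satisfy the defining relations of $U_q(\mathfrak{sl}_2)$ in $U_q(\mathfrak{sl}_2) \otimes U_q(\mathfrak{sl}_2)$ and $k$ respectively; then they extend uniquely to algebra morphisms. For $S$, since we want an anti-homomorphism, we check that the images satisfy the relations in the opposite algebra $U_q(\mathfrak{sl}_2)^{op}$, so that $S$ extends uniquely from a linear map on generators to an algebra anti-homomorphism via the universal property applied to $U_q(\mathfrak{sl}_2)^{op}$. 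Once all three maps are constructed, the Hopf algebra axioms (coassociativity, counit, and the antipode identity in Definition \ref{hopf}) only need to be checked on the generators, because both sides of each axiom are algebra morphisms (or compositions thereof), so agreement on generators forces agreement everywhere.

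The first checks, for $\varepsilon$ and for the $K$-relations under $\Delta$, are essentially trivial: $\Delta(K)\Delta(K^{-1}) = K K^{-1} \otimes K K^{-1} = 1 \otimes 1$, and $\Delta(K)\Delta(E)\Delta(K^{-1}) = KEK^{-1} \otimes 1 + K\cdot K\cdot K^{-1} \otimes KEK^{-1} = q^2 E \otimes 1 + K \otimes q^2 E = q^2\Delta(E)$, with the analogous computation for $F$. The main obstacle is verifying the commutator relation is preserved, namely that $\Delta(E)\Delta(F) - \Delta(F)\Delta(E) = \Delta((K - K^{-1})/(q - q^{-1}))$. Expanding both products, the ``cross'' terms $E \otimes F$ and the $K \otimes K^{-1}$-type terms must cancel; the key computation uses $EK^{-1} = q^2 K^{-1} E$ to turn $KF \otimes EK^{-1} - FK \otimes K^{-1}E$ into $q^{-2}FK \otimes EK^{-1} - q^{-2}FK \otimes EK^{-1} = 0$, leaving $(EF - FE) \otimes K^{-1} + K \otimes (EF - FE)$, which simplifies to $(K\otimes K - K^{-1}\otimes K^{-1})/(q - q^{-1}) = \Delta((K-K^{-1})/(q-q^{-1}))$ as required.

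For the antipode, I check the opposite relations hold. The relation $KEK^{-1} = q^2E$ becomes, under $S$, the requirement that $S(K^{-1})S(E)S(K) = q^2 S(E)$ in $U_q(\mathfrak{sl}_2)$, i.e., $K \cdot (-K^{-1}E) \cdot K^{-1} = -EK^{-1} \stackrel{?}{=} -q^2 K^{-1}E$, which holds by the same commutation identity as above; the $F$-analogue is similar. The non-trivial check is that $S(F)S(E) - S(E)S(F) = (S(K) - S(K^{-1}))/(q - q^{-1})$. Computing $S(F)S(E) = (-FK)(-K^{-1}E) = FE$ and $S(E)S(F) = (-K^{-1}E)(-FK) = K^{-1}EFK$, and then using the commutation relations to reduce $K^{-1}EFK$ to $EF$, the left hand side becomes $FE - EF = -(K - K^{-1})/(q - q^{-1}) = (K^{-1} - K)/(q - q^{-1})$, matching the right hand side. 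The quantum Serre relations are vacuous here since $\mathfrak{sl}_2$ has only one simple root.

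It remains to verify the Hopf axioms on generators. Coassociativity and counit for $K$ are immediate from $\Delta(K) = K \otimes K$ and $\varepsilon(K) = 1$. For $E$, coassociativity amounts to $(\Delta \otimes \mathrm{id})\Delta(E) = E \otimes 1 \otimes 1 + K \otimes E \otimes 1 + K \otimes K \otimes E = (\mathrm{id} \otimes \Delta)\Delta(E)$, and counit gives $\varepsilon(E)\cdot 1 + 1 \cdot E = E = E \cdot 1 + K \cdot \varepsilon(E)$; the $F$-case is symmetric. Finally, the antipode axiom on $E$ reads $\mu(S \otimes \mathrm{id})\Delta(E) = S(E)\cdot 1 + S(K)\cdot E = -K^{-1}E + K^{-1}E = 0 = \eta\varepsilon(E)$, and $\mu(\mathrm{id} \otimes S)\Delta(E) = E\cdot 1 + K\cdot(-K^{-1}E) = 0$, with an entirely analogous computation for $F$ and an immediate one for $K$. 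This completes the verification.
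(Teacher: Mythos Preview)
Your proof is correct and follows the standard approach: extend $\Delta$, $\varepsilon$ to algebra morphisms and $S$ to an anti-homomorphism by checking the defining relations on generators, then verify the Hopf axioms on generators since both sides are algebra maps. The key cross-term cancellation and the reduction $K^{-1}EFK = EF$ are handled correctly.

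Note, however, that the paper itself does not give a proof of this lemma at all; it simply states the result and cites \cite[Chapter 3]{jantzen96}. So there is no ``paper's own proof'' to compare against --- your argument supplies precisely the verification that the paper outsources to Jantzen, and it is essentially the same argument one finds there.
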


By \ref{uqasl2}, there is an injective homomorphism $U_{q_\alpha}(\mathfrak{sl_2})\rightarrow{U_q(\mathfrak{g})}$. We want to give $U_q(\mathfrak{g})$ a Hopf algebra structure such that this homomorphism is a Hopf algebra homomorphism. This leads to the following construction.

\begin{prop}\cite[Proposition 4.11]{jantzen96}\label{quanthopf}
$U_q(\mathfrak{g})$, defined in \ref{quantenvalg}, has a Hopf algebra structure with $\Delta$, $\varepsilon$ and $S$ defined on $\{E_\alpha,F_\alpha,K_\alpha,K_\alpha^{-1}\}_{\alpha\in\Pi}$ by
\begin{align}
\begin{aligned}
&\Delta(K_\alpha)& &=K_\alpha \otimes K_\alpha\\
& \Delta(E_\alpha)& &=E_\alpha\otimes 1+K_\alpha\otimes E_\alpha,\quad \Delta(F_\alpha)=F_\alpha\otimes K_\alpha^{-1} +1\otimes F_\alpha,\\
&\varepsilon(K_\alpha)& &=1,\quad \varepsilon(E_\alpha)=\varepsilon(F_\alpha)=0,\\
& S(K_\alpha)& &=K_\alpha^{-1},\quad S(E_\alpha)=-K_\alpha^{-1}E_\alpha,\quad S(F_\alpha)=-F_\alpha K_\alpha
\end{aligned}
\end{align}
\end{prop}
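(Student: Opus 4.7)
My plan is to define $\Delta$, $\varepsilon$ and $S$ on the free algebra generated by $\{E_\alpha, F_\alpha, K_\alpha, K_\alpha^{-1}\}_{\alpha \in \Pi}$, verify that they descend to $U_q(\mathfrak{g})$ by respecting the defining relations of Definition \ref{quantenvalg}, and then check the bialgebra and antipode axioms on generators (from which they propagate to all of $U_q(\mathfrak{g})$ by multiplicativity). More precisely, $\Delta$ and $\varepsilon$ are to be extended as algebra homomorphisms $U_q(\mathfrak{g}) \to U_q(\mathfrak{g}) \otimes U_q(\mathfrak{g})$ and $U_q(\mathfrak{g}) \to k$, while $S$ is to be extended as an algebra anti-homomorphism $U_q(\mathfrak{g}) \to U_q(\mathfrak{g})$ (equivalently, an algebra map into the opposite algebra).

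First I would check well-definedness of $\Delta$. The relations $K_\alpha K_\alpha^{-1} = 1$ and $K_\alpha K_\beta = K_\beta K_\alpha$ are immediate since $\Delta(K_\alpha) = K_\alpha \otimes K_\alpha$. For the relation $K_\alpha E_\beta K_\alpha^{-1} = q_\alpha^{a_{\alpha \beta}} E_\beta$, a direct computation gives
\begin{equation*}
\Delta(K_\alpha)\Delta(E_\beta)\Delta(K_\alpha^{-1}) = K_\alpha E_\beta K_\alpha^{-1} \otimes 1 + K_\alpha K_\beta K_\alpha^{-1} \otimes K_\alpha E_\beta K_\alpha^{-1} = q_\alpha^{a_{\alpha\beta}} \Delta(E_\beta),
\end{equation*}
and similarly for $F_\beta$. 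The cross-relation $[E_\alpha, F_\beta] = \delta_{\alpha\beta}(K_\alpha - K_\alpha^{-1})/(q_\alpha - q_\alpha^{-1})$ requires expanding $\Delta(E_\alpha)\Delta(F_\beta) - \Delta(F_\beta)\Delta(E_\alpha)$; the four cross terms pair off using $[K_\alpha, E_\beta]$ and $[K_\alpha, F_\beta]$ identities to leave exactly $\delta_{\alpha\beta}(K_\alpha \otimes K_\alpha - K_\alpha^{-1} \otimes K_\alpha^{-1})/(q_\alpha - q_\alpha^{-1}) = \delta_{\alpha\beta}\Delta\bigl((K_\alpha - K_\alpha^{-1})/(q_\alpha - q_\alpha^{-1})\bigr)$. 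Compatibility of $\Delta$ with the quantum Serre relations is the main obstacle and requires $q$-binomial identities together with the fact that in $U_q(\mathfrak{g}) \otimes U_q(\mathfrak{g})$ the elements $E_\alpha \otimes 1$ and $K_\alpha \otimes E_\alpha$ $q$-commute through the same combinatorics as the corresponding Serre sum; I would invoke \cite[Chapter 4]{jantzen96} or carry out the Lusztig-type bracketing argument. Well-definedness of $\varepsilon$ is trivial because all relations become tautologies on sending $E_\alpha, F_\alpha \mapsto 0$ and $K_\alpha \mapsto 1$. Well-definedness of $S$ as an anti-homomorphism is checked by reversing products: for instance $S(K_\alpha)S(E_\beta)S(K_\alpha^{-1}) = K_\alpha^{-1}(-K_\beta^{-1}E_\beta)K_\alpha = -q_\alpha^{-a_{\alpha\beta}} K_\beta^{-1}E_\beta = q_\alpha^{a_{\alpha\beta}} S(E_\beta)$ after applying $S$ to both sides of the relation and reversing the order.

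Next I would verify the bialgebra axioms. Coassociativity and the counit axiom are $k$-algebra identities, so it suffices to check them on generators. For $E_\alpha$,
\begin{equation*}
(\Delta \otimes \text{id})\Delta(E_\alpha) = E_\alpha \otimes 1 \otimes 1 + K_\alpha \otimes E_\alpha \otimes 1 + K_\alpha \otimes K_\alpha \otimes E_\alpha = (\text{id}\otimes \Delta)\Delta(E_\alpha),
\end{equation*}
with analogous computations for $F_\alpha$ and the obvious one for $K_\alpha$. The counit identities $(\varepsilon \otimes \text{id})\Delta(x) = x = (\text{id}\otimes\varepsilon)\Delta(x)$ reduce on each generator to a one-line check.

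Finally I would verify the antipode axiom $\mu \circ (S \otimes \text{id}) \circ \Delta = \mu \circ (\text{id}\otimes S)\circ \Delta = \eta \circ \varepsilon$ on generators; since both sides are convolution products and the set of elements on which they agree is a subalgebra, this suffices. For example, $\mu(S \otimes \text{id})\Delta(E_\alpha) = S(E_\alpha) + S(K_\alpha)E_\alpha = -K_\alpha^{-1}E_\alpha + K_\alpha^{-1}E_\alpha = 0 = \eta\varepsilon(E_\alpha)$, and similarly $\mu(\text{id}\otimes S)\Delta(F_\alpha) = F_\alpha S(K_\alpha^{-1}) + S(F_\alpha) = F_\alpha K_\alpha - F_\alpha K_\alpha = 0$, while $\mu(S \otimes \text{id})\Delta(K_\alpha) = K_\alpha^{-1}K_\alpha = 1 = \eta\varepsilon(K_\alpha)$. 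I expect the quantum Serre compatibility for $\Delta$ to be the only technically demanding step; everything else is a direct verification that the given formulas succeed precisely because the weight factors $K_\alpha$ in $\Delta(E_\alpha)$ and $K_\alpha^{-1}$ in $\Delta(F_\alpha)$ are calibrated to cancel against the commutation relations between $K_\alpha$ and the other generators.
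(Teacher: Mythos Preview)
Your proposal is correct and is the standard direct proof (essentially the argument in \cite[Chapter 4]{jantzen96}). Note, however, that the paper itself does not supply a proof of this proposition: it is stated with a citation to Jantzen and preceded only by the motivational remark that the Hopf structure is chosen so that each embedding $U_{q_\alpha}(\mathfrak{sl}_2)\hookrightarrow U_q(\mathfrak{g})$ of Definition~\ref{uqasl2} becomes a Hopf algebra homomorphism. So there is nothing to compare against beyond that remark; your sketch is more complete than what the paper offers, and your identification of the quantum Serre compatibility for $\Delta$ as the only nontrivial step is accurate.
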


\section{The Quantised Coordinate Ring}\label{thequantisedcoordinatering}
We now discuss an important second type of quantum group, the quantised coordinate ring of an algebraic group.

\subsection{Coordinate Rings of Algebraic Groups}

An algebraic group $G$ is defined to be an affine algebraic variety whose group structure is given by morphisms of affine varieties. Its coordinate ring $O(G)$ consists of the set of polynomial functions $f:G\rightarrow{k}$ and can be endowed with a Hopf algebra structure similar to that of $k(G)$ in Example \ref{exmp1}. 

\begin{exmp}\cite[Example I.1.5]{browngoodearl02}\label{O(sl2)} The \textbfit{special linear group of order $2$, $SL_2$}, is the algebraic group of $2\times 2$ matrices with determinant $1$, equipped with matrix multiplication. Its associated Lie algebra is $\mathfrak{sl}_2$. Define functions $x_{ij}:M_2\rightarrow{k}$ taking $2\times 2$ matrices and mapping them to their $(i,j)^{th}$ entry. In terms of these generators, the coordinate ring of $SL_2$ is \begin{equation}O(SL_2)=k[x_{11},x_{12},x_{21},x_{22}]/\langle x_{11}x_{22}-x_{12}x_{21}-1\rangle\end{equation}
$O(SL_2)$ has a Hopf algebra structure satisfying
\begin{align}
\begin{aligned}\label{osl2hopf}
& \Delta(x_{ij})& &=x_{i1}\otimes x_{1j}+x_{i2}\otimes x_{2j},\\
&\varepsilon(x_{ij})&  &=\delta_{ij},\\
& S(x_{11})& &=x_{22},\quad S(x_{12})=-x_{12},\quad S(x_{21})=-x_{21},\quad S(x_{22})=x_{11}
\end{aligned}
\end{align}
\end{exmp}

An algebraic group $G$ has an associated Lie algebra $\mathfrak{g}$. The following proposition highlights the duality between $O(G)$ and $U(\mathfrak{g})$.

\begin{prop}\cite[Theorem 3.1]{hochschild81}\label{pairingogug}
Suppose that $G$ is a semisimple, connected, simply connected algebraic group and that $\mathfrak{g}$ is its Lie algebra, then there is a perfect Hopf pairing between $O(G)$ and $U(\mathfrak{g})$. Moreover, \begin{equation}
O(G)\simeq U(\mathfrak{g})^\circ    
\end{equation}
\end{prop}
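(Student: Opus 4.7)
The plan is to construct the pairing via the natural action of $U(\mathfrak{g})$ on $O(G)$ by left-invariant differential operators, verify the Hopf pairing axioms by translating group-theoretic facts into algebraic ones, and then identify $O(G)$ with $U(\mathfrak{g})^\circ$ by using that every polynomial function on $G$ is a matrix coefficient of a finite-dimensional rational representation.

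First I identify $\mathfrak{g}$ with the space of left-invariant derivations of $O(G)$, equivalently with the tangent space $T_e G$ at the identity. By the universal property of $U(\mathfrak{g})$, this action extends to an algebra homomorphism $U(\mathfrak{g}) \to \mathrm{End}_k(O(G))$ whose image consists of left-invariant differential operators on $G$. Setting
\begin{equation}
(\cdot\,,\cdot) : O(G) \times U(\mathfrak{g}) \to k, \qquad (f,u) := (u \cdot f)(e),
\end{equation}
multiplicativity in $U(\mathfrak{g})$, namely $(f,uv) = \sum (f_{(1)},u)(f_{(2)},v)$, reduces via the identity $\Delta(f)(g_1,g_2) = f(g_1 g_2)$ to the fact that applying $uv$ at $e$ is the same as applying $u$ at $e$ to the function $g \mapsto (v \cdot f)(g)$. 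Multiplicativity in $O(G)$ is the Leibniz rule, propagated through the coproduct $\Delta(x) = x \otimes 1 + 1 \otimes x$ on $\mathfrak{g}$. The counit conditions $(1,u) = \varepsilon(u)$ and $(f,1) = f(e)$ are immediate, and antipode compatibility reflects that inversion on $G$ differentiates to negation on $\mathfrak{g}$.

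For perfectness, if $f \in O(G)$ satisfies $(u \cdot f)(e) = 0$ for every $u \in U(\mathfrak{g})$, then every jet of $f$ at $e$ vanishes; since $G$ is connected and $f$ is a regular function, this forces $f = 0$, giving injectivity of $\psi : O(G) \to U(\mathfrak{g})^*$. Conversely, injectivity of $\varphi : U(\mathfrak{g}) \to O(G)^*$ amounts to faithfulness of the differential-operator action, which follows from PBW together with the existence of local coordinates realising a basis of $\mathfrak{g}$ as independent vector fields near $e$.

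The identification $O(G) \simeq U(\mathfrak{g})^\circ$ is the heart of the proposition and where the two hypotheses on $G$ enter crucially. The inclusion $\psi(O(G)) \subseteq U(\mathfrak{g})^\circ$ uses that $O(G)$ is spanned by matrix coefficients of finite-dimensional rational $G$-modules $V$; for such an $f$, the kernel of $U(\mathfrak{g}) \to \mathrm{End}_k(V)$ is an ideal of finite codimension annihilating $\psi(f)$. The reverse inclusion is the \textbf{main obstacle}: given $\phi \in U(\mathfrak{g})^\circ$ vanishing on an ideal $I$ of finite codimension, the finite-dimensional $U(\mathfrak{g})$-module $U(\mathfrak{g})/I$ must integrate to a rational representation of $G$, whereupon $\phi$ becomes visible as a matrix coefficient in $O(G)$. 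This integration step is precisely where simple-connectedness (so that $\mathfrak{g}$-representations lift uniquely to $G$) and semisimplicity (for complete reducibility and to guarantee that all finite-dimensional $\mathfrak{g}$-modules are rational) are both essential; I would invoke the standard equivalence of categories between finite-dimensional rational $G$-modules and finite-dimensional $\mathfrak{g}$-modules for semisimple, simply connected algebraic groups in characteristic zero.
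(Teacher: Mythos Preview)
The paper does not supply a proof of this proposition; it is simply stated with a citation to Hochschild and followed immediately by a remark, so there is no argument in the paper against which to compare your attempt.

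That said, your outline is the standard one and is correct. The pairing via left-invariant differential operators evaluated at $e$, the verification of the Hopf-pairing axioms from the Leibniz rule and the group law, and the perfectness argument (vanishing of all jets at $e$ on a connected variety in characteristic zero forces $f=0$; PBW plus local coordinates gives faithfulness on the $U(\mathfrak{g})$ side) are all sound. You have also correctly isolated the substantive step for the isomorphism $O(G)\simeq U(\mathfrak{g})^\circ$: every finite-dimensional $U(\mathfrak{g})$-module must integrate to a rational $G$-module, which is exactly the content of the cited result in Hochschild. One minor sharpening of your last paragraph: simple-connectedness is what makes the character lattice of a maximal torus coincide with the full weight lattice, so that every dominant integral weight exponentiates; semisimplicity then supplies Weyl's complete reducibility so that an arbitrary finite-dimensional $\mathfrak{g}$-module decomposes into such highest-weight pieces. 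You have both ingredients, just slightly entangled in the phrasing.
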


\begin{remark} \label{dualitymodcomod}
Via this duality the comodules of $O(G)$ correspond to the modules of $U(\mathfrak{g})$ by the remarks in Lemma \ref{modcomod}.
\end{remark}

However, it is not true that $O(G)^\circ\simeq U(\mathfrak{g})$. We have the weaker result that $U(\mathfrak{g})$ is isomorphic to the set of elements of $O(G)^*$ which vanish on the augmentation ideal of $O(G)$ \cite[p. 90]{browngoodearl02}.

\subsection{The Quantised Coordinate Ring}
In our following discussions, $G$ denotes a semisimple,connected, simply connected algebraic group and $\mathfrak{g}$ its corresponding Lie algebra. The motivation with our following definition of the quantised coordinate ring $O_q(G)$ is to construct a Hopf algebra yielding a Hopf pairing with $U_q(\mathfrak{g})$, mirroring the classical case discussed in Proposition \ref{pairingogug}. We make a few preliminary definitions.

\begin{defn}\cite[Definition I.7.2]{browngoodearl02}
Suppose that $V$ is a $U_q(\mathfrak{g})$-module. Given $v\in V$ and $f\in V^*$, a \textbfit{coordinate function} $c_{f,v}\in U_q(\mathfrak{g})^*$ is a linear form with \begin{equation}
c_{f,v}(u)=f(uv),\quad \text{ for all }u\in U_q(\mathfrak{g})  
\end{equation} 
\end{defn}

Let $A$ denote the set generated by the coordinate functions of a set $\mathcal{C}$ of finite dimensional $U_q(\mathfrak{g})$-modules. Suppose that $V$ is a finite-dimensional $U_q(\mathfrak{g})$-module. Its annihilator, $\text{Ann}_{U_q(\mathfrak{g})}(V)=\{u\in U_q(\mathfrak{g})\mid uv=0\text{ for all }v\in V\}$ is an ideal of finite codimension over $U_q(\mathfrak{g})$ contained in the kernel of $c_{f,v}$. Therefore, $A$ is a subspace of $U_q(\mathfrak{g})^\circ$ by Definition \ref{rd}.  Moreover, by \cite[Corollary I.7.4]{browngoodearl02} $A$ is a sub-bialgebra of $U_q(\mathfrak{g})^\circ$, and if $\mathcal{C}$ is closed under duals then $A$ is a Hopf subalgebra of $U_q(\mathfrak{g})^\circ$.\\

The finite dimensional representation theory of $U_q(\mathfrak{g})$ is very similar to that of $U(\mathfrak{g})$. We use the notation from Section \ref{quantiseduniversalenvelopingalgebra}. So, $\mathfrak{g}$ has a fixed Cartan subalgebra $\mathfrak{h}$, a root system $\Phi$, with simple roots $\Pi$.

\begin{defn}\cite[p. 42]{browngoodearl02}
\begin{enumerate}
    \item The \textbfit{weight lattice} $\Lambda$ of $\mathfrak{g}$ is $
    \Lambda=\bigoplus_{\alpha\in\Pi}\mathbb{Z}\omega_\alpha\subseteq\mathfrak{h}^*$ where $\omega_\alpha\in\mathfrak{h}^*$ are \textit{fundamental weights} satisfying $(\omega_\alpha,\beta)=\delta_{\alpha\beta}\frac{(\beta,\beta)}{2}$ for $\alpha,\beta\in\Pi$. 
    \item The set of \textbfit{dominant integral weights}, $\Lambda^+$, is the set
    $\Lambda^+=\sum_{\alpha\in\Pi}\mathbb{Z}_{\geq 0}\omega_\alpha$
\end{enumerate}
\end{defn}

\begin{defn}\cite[p. 70 ]{jantzen96}\label{type1}
Suppose that V is a finite dimensional $U_q(\mathfrak{g})$-module. We say that V is of \textbfit{type 1} if \begin{equation}V=\bigoplus_{\lambda\in\Lambda}V(\lambda), \text{ where } V(\lambda)=\{v\in V\mid K_{\mu}v=q^{(\lambda,\mu)}v \text{ for all } \mu\in \mathbb{Z}\Phi\}.\end{equation}
We say that $V(\lambda)$ is a \textit{weight space} of $V$ with \textit{weight} $\lambda$. Given $v\in V$, if $E_\alpha v=0$ for all $\alpha\in\Pi$ then $v$ is a \textit{highest weight vector} of \textit{highest weight} $\lambda$ and the module generated by $v$ is a \textit{highest weight module}. 
\end{defn}
The following theorem shows that the finite dimensional simple type 1 $U_q(\mathfrak{g})$-modules are indexed by dominant weights and are highest weight modules.
\begin{thm}\cite[p. 56]{browngoodearl02}
There is a bijection $\lambda\rightarrow{V(\lambda)}$ between the set of dominant integral weights $\Lambda^+$ and finite dimensional simple type 1 $U_q(\mathfrak{g})$-modules. Further, $V(\lambda)$ has a highest weight vector $u_\lambda$ of weight $\lambda$.
\end{thm}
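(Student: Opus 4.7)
The plan is to mimic the classical classification of finite dimensional simple modules for semisimple Lie algebras, using Verma modules and the embeddings $U_{q_\alpha}(\mathfrak{sl}_2) \hookrightarrow U_q(\mathfrak{g})$ from Definition \ref{uqasl2} to import the rank-one result as the main input.

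First, I would construct, for each $\lambda \in \Lambda$, a Verma module $M(\lambda)$. Let $k_\lambda$ be the one-dimensional $U_q^{\geq 0}(\mathfrak{g})$-module on which $E_\alpha$ acts as zero and $K_\mu$ acts as $q^{(\lambda,\mu)}$, and set $M(\lambda) = U_q(\mathfrak{g}) \otimes_{U_q^{\geq 0}(\mathfrak{g})} k_\lambda$. The triangular decomposition (Proposition \ref{triangulardecomp}) shows $M(\lambda) \cong U_q^-(\mathfrak{g}) \otimes k_\lambda$ as a vector space and gives a weight space decomposition with $1 \otimes 1$ a highest weight vector $u_\lambda$ of weight $\lambda$. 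A standard argument shows $M(\lambda)$ has a unique maximal proper submodule, whose quotient $V(\lambda)$ is simple of type 1 with highest weight vector $u_\lambda$.

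Next, I would show $V(\lambda)$ is finite dimensional precisely when $\lambda \in \Lambda^+$. For $\lambda \in \Lambda^+$, set $n_\alpha = (\lambda, \alpha^\vee) \in \mathbb{Z}_{\geq 0}$. Using Definition \ref{uqasl2}, the element $F_\alpha^{n_\alpha+1} u_\lambda$ lies in the kernel of every $E_\beta$ (this uses the quantum $\mathfrak{sl}_2$-theory applied to the copy $U_q^\alpha(\mathfrak{g})$ to get $E_\alpha F_\alpha^{n_\alpha+1} u_\lambda = 0$, and the commutation relations to handle $\beta \neq \alpha$), so it generates a proper submodule of $M(\lambda)$, and hence is zero in $V(\lambda)$. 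From this one deduces that the set of weights of $V(\lambda)$ is stable under the Weyl group and contained in $\lambda - \sum_\alpha \mathbb{Z}_{\geq 0}\alpha$, hence finite; combined with finite-dimensionality of each weight space, $\dim V(\lambda) < \infty$. Conversely, if $V(\lambda)$ is finite dimensional, then restricting to each $U_q^\alpha(\mathfrak{g}) \cong U_{q_\alpha}(\mathfrak{sl}_2)$ and using the rank-one classification forces $(\lambda, \alpha^\vee) \in \mathbb{Z}_{\geq 0}$.

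To close the bijection, I would take a finite dimensional simple type 1 module $V$ and show it is isomorphic to some $V(\lambda)$. The type 1 hypothesis gives the weight decomposition $V = \bigoplus_{\mu \in \Lambda} V(\mu)$. Since $V$ is finite dimensional, we may pick a weight $\lambda$ maximal in the partial order $\mu \leq \mu' \iff \mu' - \mu \in \sum_\alpha \mathbb{Z}_{\geq 0}\alpha$; for any nonzero $v \in V(\lambda)$, maximality forces $E_\alpha v = 0$ (since $E_\alpha v \in V(\lambda + \alpha)$), so $v$ is a highest weight vector. By the universal property of $M(\lambda)$ and simplicity of $V$, we obtain a surjection $V(\lambda) \twoheadrightarrow V$, which is an isomorphism by simplicity of $V(\lambda)$. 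Finally, $\lambda$ is recovered from $V(\lambda)$ as the highest weight, so the assignment $\lambda \mapsto V(\lambda)$ is injective.

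The main obstacle is the finite-dimensionality direction of step two: establishing that $F_\alpha^{n_\alpha+1} u_\lambda$ is a highest weight vector in $M(\lambda)$ requires the quantum Serre relations to propagate the $\mathfrak{sl}_2$-level vanishing $E_\alpha F_\alpha^{n_\alpha+1} u_\lambda = 0$ to the vanishing against all $E_\beta$, and this is the step where the $q$-combinatorial identities (encoded in the $q$-binomial coefficients of Definition \ref{quantenvalg}) genuinely enter, rather than following formally from the $q \to 1$ analogy.
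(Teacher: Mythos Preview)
The paper does not give a proof of this theorem; it simply quotes the result from Brown--Goodearl. Your outline is the standard argument one finds there (and in Jantzen's \emph{Lectures on Quantum Groups}), so in that sense your approach agrees with the cited source.

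One correction to your final paragraph: showing that $F_\alpha^{n_\alpha+1}u_\lambda$ is a highest weight vector in $M(\lambda)$ does \emph{not} use the quantum Serre relations. For $\beta\neq\alpha$ the defining relation $E_\beta F_\alpha - F_\alpha E_\beta = 0$ already gives $E_\beta F_\alpha^{n_\alpha+1}u_\lambda = F_\alpha^{n_\alpha+1}E_\beta u_\lambda = 0$; only the vanishing $E_\alpha F_\alpha^{n_\alpha+1}u_\lambda = 0$ needs any work, and that is the rank-one $U_{q_\alpha}(\mathfrak{sl}_2)$ identity involving $q$-integers. The Serre relations enter at a later step: once you know $F_\alpha^{n_\alpha+1}u_\lambda=0$ in $V(\lambda)$ for each $\alpha$, you still need each $F_\alpha$ to act locally nilpotently on \emph{all} of $V(\lambda)=U_q^-(\mathfrak{g})u_\lambda$, not just on $u_\lambda$, in order to obtain integrability, Weyl-group symmetry of the set of weights, and hence finiteness of that set. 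Propagating local nilpotency from $u_\lambda$ to arbitrary products $F_{\beta_1}\cdots F_{\beta_r}u_\lambda$ is precisely where the $F$-Serre relations $(\mathrm{ad}\,F_\alpha)^{1-a_{\alpha\beta}}F_\beta=0$ are genuinely used.
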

We are now ready to define $O_q(G)$.
\begin{defn}\label{quantisedcoordring}
The \textbfit{quantised coordinate ring} $O_q(G)$ is the subalgebra of $U_q(\mathfrak{g})^{\circ}$ generated by the coordinate functions of  $V(\lambda)$ for $\lambda\in\Lambda^+$.
\end{defn}

We note that there is a Hopf pairing.
\begin{equation}
(\cdot,\cdot):U_q(\mathfrak{g})\times O_q(G)\rightarrow{k}, \text{ given by }(u,c_{f,v})=f(uv)    
\end{equation}
for $u\in U_q(\mathfrak{g}),c_{f,v}\in O_q(G)$. Further, as the $V(\lambda)$ are closed under duals by \cite[p. 57]{browngoodearl02}, we see that $O_q(G)$ is a Hopf subalgebra of $U_q(\mathfrak{g})^\circ$. Therefore, $O_q(G)$ inherits a Hopf algebra structure from that of $U_q(\mathfrak{g})^\circ$, as defined in Section \ref{hopfdual}. \\

The following theorem is a quantum analogue of the classical Peter-Weyl theorem. It allows us to examine the crystal basis theory of $O_q(G)$, discussed in Section \ref{thecategoryofcrystals}.

\begin{thm}\cite[Proposition 4.1]{ganev16}\label{qpw}
(Quantum Peter-Weyl Theorem) There is an isomorphism of $U_q(\mathfrak{g})$-$U_q(\mathfrak{g})$-bimodules \begin{equation}\phi:\bigoplus_{\lambda\in\Lambda^+}V(\lambda)\otimes V(\lambda)^*\rightarrow{O_q(G)}\end{equation}
\end{thm}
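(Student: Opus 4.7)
The plan is to define $\phi$ on pure tensors by $\phi(v \otimes f) = c_{f,v}$, where $c_{f,v}(u) = f(uv)$ is the coordinate function attached to $v \in V(\lambda)$, $f \in V(\lambda)^*$. Each such $c_{f,v}$ lies in $O_q(G)$ by Definition \ref{quantisedcoordring}, so $\phi$ is a well-defined linear map from the direct sum into $O_q(G)$. For surjectivity, I observe that $O_q(G)$ is generated as a subalgebra of $U_q(\mathfrak{g})^\circ$ by these coordinate functions, and the multiplication in $U_q(\mathfrak{g})^\circ$ (dual to $\Delta$) satisfies $c_{f,v}\, c_{g,w} = c_{f \otimes g,\, v \otimes w}$, where $v \otimes w$ is viewed inside the tensor product module $V(\lambda) \otimes V(\mu)$. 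Complete reducibility of type 1 $U_q(\mathfrak{g})$-modules yields $V(\lambda) \otimes V(\mu) \cong \bigoplus_\nu V(\nu)^{\oplus m_{\lambda\mu}^\nu}$, so such a product decomposes as a finite sum of coordinate functions on simple modules; iterating shows every element of $O_q(G)$ is in the image of $\phi$.

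For injectivity I would use Jacobson density. Any $z \in \bigoplus_\lambda V(\lambda) \otimes V(\lambda)^*$ is supported on a finite set $F \subseteq \Lambda^+$. Since $k$ is algebraically closed and each $V(\lambda)$ is a finite-dimensional simple $U_q(\mathfrak{g})$-module with $\mathrm{End}_{U_q(\mathfrak{g})}(V(\lambda)) = k$, and the $V(\lambda)$ for $\lambda \in F$ are pairwise non-isomorphic, density provides a surjection $U_q(\mathfrak{g}) \twoheadrightarrow \prod_{\lambda \in F} \mathrm{End}_k(V(\lambda))$. Dualising this, together with the canonical identification $V(\lambda) \otimes V(\lambda)^* \cong \mathrm{End}_k(V(\lambda))^*$, yields an injection $\bigoplus_{\lambda \in F} V(\lambda) \otimes V(\lambda)^* \hookrightarrow U_q(\mathfrak{g})^*$ which agrees with $\phi$ restricted to those summands. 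Hence $\phi(z) = 0$ forces $z = 0$.

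Finally I would verify bimodule equivariance. Equip $V(\lambda) \otimes V(\lambda)^*$ with the left action $u \cdot (v \otimes f) = (uv) \otimes f$ and the right action $(v \otimes f) \cdot u = v \otimes (f \cdot u)$, where $(f \cdot u)(w) = f(uw)$, and give $O_q(G) \subseteq U_q(\mathfrak{g})^\circ$ the dual $U_q(\mathfrak{g})$-bimodule structure $(u \triangleright c \triangleleft u')(x) = c(u' x u)$. Then $\phi(u \cdot (v \otimes f))(x) = f(xuv) = (u \triangleright c_{f,v})(x)$ and $\phi((v \otimes f) \cdot u)(x) = f(uxv) = (c_{f,v} \triangleleft u)(x)$, so $\phi$ is a bimodule homomorphism. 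The main obstacle will be the density step: it relies on the (standard but essential) inputs that in the quantum setting each $V(\lambda)$ is absolutely simple with endomorphism algebra $k$ and that $V(\lambda) \not\cong V(\mu)$ for $\lambda \neq \mu$. Once those facts are available, the argument becomes a faithful quantum transport of the classical algebraic Peter--Weyl theorem for $G$.
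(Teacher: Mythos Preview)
The paper does not supply its own proof of this theorem: the statement is simply quoted from \cite[Proposition 4.1]{ganev16} and followed immediately by a remark on the bimodule action, with no argument given. There is therefore nothing in the paper to compare your proposal against.

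That said, your argument is the standard one and is essentially correct. The map $\phi(v\otimes f)=c_{f,v}$, the convolution identity $c_{f,v}\,c_{g,w}=c_{f\otimes g,\,v\otimes w}$, complete reducibility of finite-dimensional type~1 modules for surjectivity, and the Jacobson density step for injectivity are exactly the ingredients used in the cited reference and in other standard treatments. The inputs you flag as needing care---that each $V(\lambda)$ is absolutely simple with $\mathrm{End}_{U_q(\mathfrak{g})}(V(\lambda))=k$ and that $V(\lambda)\not\cong V(\mu)$ for $\lambda\neq\mu$---are available in this setting (they are part of the classification of finite-dimensional type~1 simples recalled just before Definition~\ref{quantisedcoordring}). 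Your bimodule verification also matches the convention recorded in the remark following the theorem and used later in Section~\ref{coidealquotient}. No gaps.
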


\begin{remark}
We see that the right $U_q(\mathfrak{g})$-module action on $O_q(G)$ comes from the action of $U_q(\mathfrak{g})$ on $V(\lambda)^*$, while the left action comes from the action of $U_q(\mathfrak{g})$ on $V(\lambda)$.
\end{remark}

\subsection{$O_q(SL_2)$: The Quantised Coordinate Ring of $SL_2$}

We would like to use Definition \ref{quantisedcoordring} to construct the quantised coordinate ring $O_q(SL_2)$, where $SL_2=SL_2(k)$. We first construct $O_q(SL_2(\mathbb{C}))$. The fundamental weight of $\mathfrak{sl}_2(\mathbb{C})$ is $\omega=\frac{\alpha}{2}$, where $\alpha$ is defined in Example \ref{usl2}. We identify the dominant weight lattice $\Lambda^+=\mathbb{Z}_{\geq 0}\omega$ with $\mathbb{Z}_{\geq 0}$ and state the following propositions regarding simple $U_q(\mathfrak{sl}_2)$-modules.

\begin{prop}\cite[Section 4.2.6]{hongkang02}\cite[Section 1.5]{kashiwara95}\label{irreducibles}
For each $n\in\mathbb{Z}_{\geq 0}$, there is an \\$(n+1)$-dimensional simple $U_q(\mathfrak{sl}_2)$-module $V(n)$ with basis $\{u,Fu,\dots,F^{(n)}u\}$ satisfying the relations
\begin{equation}
    Eu=0,\quad Ku=q^nu,\quad F^{(k)}u=\frac{1}{[k]_q!}F^ku
\end{equation}
where $[k]_q!$ is defined in Definition \ref{quantenvalg}.
\end{prop}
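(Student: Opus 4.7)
The plan is to construct $V(n)$ explicitly, verify that the defining relations of $U_q(\mathfrak{sl}_2)$ hold on the constructed module, and then prove simplicity. I would start with a formal $(n+1)$-dimensional vector space on basis $\{v_0, v_1, \dots, v_n\}$, where eventually $v_k$ will be identified with $F^{(k)}u$. The generators are defined to act by
\[
Kv_k = q^{n-2k}v_k, \quad K^{-1}v_k = q^{2k-n}v_k, \quad Fv_k = [k+1]_q\, v_{k+1}, \quad Ev_k = [n-k+1]_q\, v_{k-1},
\]
with the convention $v_{-1} = v_{n+1} = 0$. This is forced on us: once we insist $u=v_0$ satisfies $Eu=0$, $Ku=q^n u$, and $v_k = F^{(k)}u$, the relation $KF = q^{-2}FK$ (and induction) forces the $K$-action, and $F v_k = F^{(k+1)}u \cdot [k+1]_q$ is immediate from the definition of $F^{(k)}$.

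Next I would verify that the above formulas respect the defining relations of $U_q(\mathfrak{sl}_2)$ from Example \ref{Uq(sl2)}. The relation $KK^{-1}=K^{-1}K=1$ is immediate, and the relations $KEK^{-1}=q^2E$, $KFK^{-1}=q^{-2}F$ follow by direct substitution on each weight vector. I expect the main obstacle, and only nontrivial step, to be the bracket relation; one computes
\[
(EF - FE)v_k = \bigl([k+1]_q[n-k]_q - [k]_q[n-k+1]_q\bigr)v_k,
\]
which must equal $\tfrac{K-K^{-1}}{q-q^{-1}}v_k = [n-2k]_q\, v_k$. This reduces to the $q$-integer identity $[k+1]_q[n-k]_q - [k]_q[n-k+1]_q = [n-2k]_q$, a routine consequence of $[m]_q = (q^m - q^{-m})/(q-q^{-1})$.

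Finally, to establish simplicity, I would use the weight space decomposition $V(n) = \bigoplus_{k=0}^{n} k v_k$, whose weights $q^{n-2k}$ are pairwise distinct because $q$ is not a root of unity. Any submodule $W$ is therefore a direct sum of some subset of these weight lines. If $v_k \in W$ for some $k$, then since $[m]_q \neq 0$ for all $1 \leq m \leq n$ (again because $q$ is not a root of unity), successive applications of $E$ produce nonzero scalar multiples of $v_{k-1}, \dots, v_0$ and successive applications of $F$ produce $v_{k+1}, \dots, v_n$, forcing $W = V(n)$. Setting $u := v_0$, the conditions $Eu=0$ and $Ku=q^n u$ hold by construction, and a straightforward induction using $Fv_k = [k+1]_q v_{k+1}$ gives $F^k u = [k]_q!\, v_k$, so $v_k = F^{(k)}u$, yielding the claimed basis.
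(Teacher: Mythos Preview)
Your proposal is correct and is precisely the standard construction. Note, however, that the paper does not actually supply a proof of this proposition: it is stated with citations to \cite[Section 4.2.6]{hongkang02} and \cite[Section 1.5]{kashiwara95} and then used without further argument. Your explicit verification---defining the action on a basis, checking the $U_q(\mathfrak{sl}_2)$ relations via the $q$-integer identity $[k+1]_q[n-k]_q-[k]_q[n-k+1]_q=[n-2k]_q$, and deducing simplicity from the distinctness of the $K$-eigenvalues $q^{n-2k}$ when $q$ is not a root of unity---is exactly the argument found in those references (and in \cite[Chapter 2]{jantzen96}, also cited elsewhere in the paper). One cosmetic point: in the line ``$V(n)=\bigoplus_{k=0}^n k v_k$'' you presumably mean the one-dimensional subspaces $k\,v_k$ (with $k$ the ground field), which is slightly ambiguous given that $k$ is also your summation index; writing $kv_0\oplus\cdots\oplus kv_n$ or $\bigoplus_j k\,v_j$ would avoid the clash.
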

\begin{thm}\label{generate}\cite[Theorem 1.1]{kashiwara95} Every simple type 1 $(n+1)$-dimensional $U_q(\mathfrak{sl}_2)$-module is isomorphic to $V(n)$ for some $n\in\mathbb{Z}_{\geq 0}$.
\end{thm}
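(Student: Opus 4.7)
The plan is to prove this by producing a highest weight vector in $V$ and showing that the cyclic submodule it generates exhausts $V$, matching the description of $V(n)$ given in Proposition \ref{irreducibles}.

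First, I would exploit the type 1 hypothesis together with finite-dimensionality. The module decomposes as $V = \bigoplus_{\lambda \in \mathbb{Z}} V(\lambda)$, with only finitely many non-zero summands. Because the defining relation $K E K^{-1} = q^2 E$ implies that $E$ raises weights by $2$, any non-zero vector $v$ in a weight space $V(m)$ of maximal weight $m$ satisfies $E v = 0$. This $v$ will play the role of a highest weight vector.

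Next, I would consider the chain $v, Fv, F^2 v, \ldots$, which lives in the successive weight spaces $V(m), V(m-2), V(m-4), \ldots$ and is therefore linearly independent as long as the vectors are non-zero. By finite-dimensionality there is a smallest integer $N \geq 0$ with $F^{N+1} v = 0$. The technical core of the argument is the identity
\begin{equation*}
E F^k v \;=\; [k]_q \, [m-k+1]_q \, F^{k-1} v,
\end{equation*}
which I would establish by induction on $k$, using $[E,F] = (K - K^{-1})/(q - q^{-1})$ together with $K F = q^{-2} F K$ and the telescoping identity $\sum_{j=0}^{k-1} [m - 2j]_q = [k]_q [m-k+1]_q$. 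Applying $E$ to $F^{N+1} v = 0$ gives $[N+1]_q [m-N]_q F^N v = 0$; since $q$ is not a root of unity, $[a]_q$ vanishes only for $a=0$, and since $F^N v \neq 0$ we must have $m = N$.

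The displayed formula, combined with $F \cdot F^k v = F^{k+1} v$ and the action of $K^{\pm 1}$ on weight vectors, shows that $\mathrm{span}\{v, Fv, \ldots, F^N v\}$ is a non-zero $U_q(\mathfrak{sl}_2)$-submodule of $V$. Simplicity then forces this span to equal $V$, so $N+1 = \dim V = n+1$ and in particular $m = n$. Finally, the linear map sending the generator $u$ of $V(n)$ to $v$ and $F^{(k)} u$ to $\tfrac{1}{[k]_q!} F^k v$ intertwines the action of all generators by construction and by the commutator identity; it is surjective by the previous step and injective because both modules are simple of the same dimension. The main obstacle is the inductive verification of the $E F^k v$ formula, but this reduces to a routine $q$-integer manipulation and is the only non-formal ingredient.
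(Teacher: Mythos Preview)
The paper does not supply its own proof of this theorem; it is merely stated with a citation to Kashiwara, so there is nothing to compare against. Your argument is correct and is the standard textbook proof (essentially the one in Kassel or Jantzen): locate a highest weight vector via the weight decomposition, run the $F$-string, use the commutator identity to compute $E F^{k} v$, and invoke simplicity to conclude. The inductive identity $E F^{k} v = [k]_{q}\,[m-k+1]_{q}\,F^{k-1} v$ is exactly right, and the deduction $m=N$ from $[N+1]_{q}[m-N]_{q}=0$ correctly uses that $q$ is not a root of unity. One cosmetic remark: your final injectivity justification via simplicity is fine, but since you have already exhibited the map as a bijection between explicit bases, injectivity is immediate and you need not appeal to simplicity a second time.
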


We also have the following important proposition regarding modules over general $U_q(\mathfrak{g})$.
\begin{prop}\label{finitgen}\cite[Proposition I.7.8]{browngoodearl02}
If $\lambda_1,\dots,\lambda_k$ generate $\Lambda^+$, then the coordinate functions of $V(\lambda_1),\dots,V(\lambda_k)$ generate $O_q(G)$.
\end{prop}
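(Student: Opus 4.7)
The plan is to exploit the fact that the algebra structure on $O_q(G) \subseteq U_q(\mathfrak{g})^\circ$ is the convolution product, which at the level of coordinate functions corresponds to the tensor product of $U_q(\mathfrak{g})$-modules. Concretely, a direct computation using $\Delta(u) = \sum u_{(1)} \otimes u_{(2)}$ shows that for any type 1 modules $V, W$ and any $v \in V$, $w \in W$, $f \in V^*$, $g \in W^*$,
\begin{equation*}
c_{f,v} \cdot c_{g,w} \;=\; c_{f \otimes g,\, v \otimes w},
\end{equation*}
where the right-hand side is the coordinate function of $V \otimes W$ obtained by viewing $f \otimes g \in V^* \otimes W^* \subseteq (V \otimes W)^*$. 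This multiplicativity identity is the engine of the proof.

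First I would establish that for any $\lambda, \mu \in \Lambda^+$ the module $V(\lambda+\mu)$ appears as a direct summand of $V(\lambda) \otimes V(\mu)$. Using the coproduct formulas of Proposition \ref{quanthopf}, the vector $u_\lambda \otimes u_\mu$ satisfies $E_\alpha \cdot (u_\lambda \otimes u_\mu) = E_\alpha u_\lambda \otimes u_\mu + K_\alpha u_\lambda \otimes E_\alpha u_\mu = 0$, and has $K_\alpha$-weight $\lambda + \mu$. The submodule it generates is therefore a nonzero highest weight module of weight $\lambda+\mu$; since the $(\lambda+\mu)$-weight space of $V(\lambda) \otimes V(\mu)$ is one-dimensional (only $u_\lambda \otimes u_\mu$ attains the maximal weight) and finite-dimensional type 1 $U_q(\mathfrak{g})$-modules are completely reducible, this submodule must be isomorphic to $V(\lambda+\mu)$ and occurs as a direct summand. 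Iterating, for any decomposition $\lambda = \sum_i n_i \lambda_i$ with $n_i \in \mathbb{Z}_{\geq 0}$ I obtain an embedding $\iota : V(\lambda) \hookrightarrow V(\lambda_1)^{\otimes n_1} \otimes \cdots \otimes V(\lambda_k)^{\otimes n_k}$ onto a direct summand, together with a $U_q(\mathfrak{g})$-equivariant projection $\pi$ in the reverse direction.

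Next I would transport coordinate functions through this embedding. Given $v \in V(\lambda)$ and $f \in V(\lambda)^*$, we have $c_{f,v} = c_{f \circ \pi,\, \iota(v)}$, which is now a coordinate function of the tensor product. By finite-dimensionality, $(V(\lambda_1)^{\otimes n_1} \otimes \cdots \otimes V(\lambda_k)^{\otimes n_k})^*$ is canonically the tensor product of the duals, so I can expand $\iota(v)$ and $f \circ \pi$ as sums of pure tensors, and then apply the multiplicativity identity $n_1 + \cdots + n_k$ times to express $c_{f,v}$ as a polynomial in coordinate functions of $V(\lambda_1), \ldots, V(\lambda_k)$. Since $O_q(G)$ is by Definition \ref{quantisedcoordring} generated by the coordinate functions of the $V(\lambda)$ for $\lambda \in \Lambda^+$, this will prove the claim.

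The main obstacle is the Cartan-component step, namely the direct-summand embedding $V(\lambda + \mu) \hookrightarrow V(\lambda) \otimes V(\mu)$. This rests on two genuinely nontrivial facts: the specific form of $\Delta(E_\alpha)$ which ensures that the tensor product of highest weight vectors is again a highest weight vector, and complete reducibility for finite-dimensional type 1 modules (the quantum analogue of Weyl's theorem). Once these are granted, the remainder of the argument is formal manipulation of the multiplicativity identity for coordinate functions.
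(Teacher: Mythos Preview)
The paper does not actually prove this proposition; it is stated and cited from \cite[Proposition I.7.8]{browngoodearl02} without argument. Your proposal is correct and is essentially the proof given in the cited reference: the multiplicativity identity $c_{f,v}\cdot c_{g,w}=c_{f\otimes g,\,v\otimes w}$ together with the Cartan-component embedding $V(\lambda+\mu)\hookrightarrow V(\lambda)\otimes V(\mu)$ (via $u_\lambda\otimes u_\mu$ and complete reducibility) reduces every $c_{f,v}$ on $V(\lambda)$ to a polynomial in coordinate functions of the $V(\lambda_i)$.
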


Therefore, we see that $O_q(SL_2(\mathbb{C}))$ is the $k$-algebra generated by the coordinate functions of $V(1)$. If we relabel the basis as $\{v_1,v_2\}$ then our notation corresponds to that used in $\cite[Section 1.7]{browngoodearl02}$, where $O_q(SL_2(\mathbb{C}))$ is constructed using the coordinate functions $x_{ij}=c_{f_i,v_j}$, with $\{f_1,f_2\}$ the dual basis. Moreover, \cite[Theorem I.7.16]{browngoodearl02} shows that $O_q(SL_2(\mathbb{C}))$ is isomorphic to $O_q(SL_2(k))$, and so we can unambiguously define $O_q(SL_2)$ as follows.

\begin{defn}\cite[Section I.1.6]{browngoodearl02}\label{oq(sl2)}
$O_q(SL_2)$ is generated by $\{X_{11},X_{12},X_{21},X_{22}\}$ with the relations
\begin{align}
\begin{aligned}
& X_{11}X_{12}=qX_{12}X_{11}, \qquad X_{11}X_{21}=qX_{21}X_{11},\\
&X_{12}X_{22}=qX_{22}X_{12}, \qquad X_{21}X_{22}=qX_{22}X_{21},\\
&X_{12}X_{21}=X_{21}X_{12},\\
&X_{11}X_{22}-X_{22}X_{11}=(q-q^{-1})X_{12}X_{21},\\
&X_{11}X_{22}-qX_{12}X_{21}=1
\end{aligned}
\end{align}
\end{defn}

\begin{remark}
When $q=1$, we obtain our relations for the classical coordinate ring $O(SL_2)$ from Example \ref{O(sl2)}. We see that the first six express commutativity, whereas the last is the condition $X_{11}X_{22}-X_{12}X_{22}=1$.
\end{remark}

\begin{prop}\cite[Section I.7.10]{browngoodearl02}\label{hopfoqsl2}
$O_q(SL_2)$ has a Hopf algebra structure satisfying
\begin{align}
\begin{aligned}
&\Delta(X_{ij})& &=X_{i1}\otimes X_{1j}+X_{i2}\otimes X_{2j},\\
&\varepsilon(X_{ij})& &=\delta_{ij},\\
&S(X_{11})& &=X_{22},\quad S(X_{12})=-q^{-1}X_{12},\quad S(X_{21})=-qX_{21},\quad S(X_{22})=X_{11}
\end{aligned}
\end{align}
\end{prop}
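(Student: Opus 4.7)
The plan is to exploit the fact that, by the discussion following Definition \ref{quantisedcoordring}, $O_q(SL_2)$ sits inside $U_q(\mathfrak{sl}_2)^\circ$ as a Hopf subalgebra, since the family $\{V(\lambda)\}_{\lambda\in\Lambda^+}$ is closed under duals. Its Hopf algebra structure is therefore inherited from that of $U_q(\mathfrak{sl}_2)^\circ$ described in Section \ref{hopfdual}, with $\Delta$, $\varepsilon$, and $S$ obtained by restricting $\mu^*$, $\eta^*$, and $S^*$ respectively. The task reduces to evaluating these restrictions on the algebra generators $X_{ij}=c_{f_i,v_j}$, where $\{v_1,v_2\}$ is the basis of $V(1)$ fixed in Proposition \ref{irreducibles} and $\{f_1,f_2\}$ is its dual basis.

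For the coproduct, I would unpack the definition directly. For $u,u'\in U_q(\mathfrak{sl}_2)$ we have $\Delta(X_{ij})(u\otimes u')=X_{ij}(uu')=f_i(uu'v_j)$. Expanding $u'v_j\in V(1)$ in the basis as $u'v_j=\sum_k f_k(u'v_j)v_k=\sum_k X_{kj}(u')v_k$ and then applying $u$ followed by $f_i$ gives $\sum_k X_{ik}(u)X_{kj}(u')$, which identifies $\Delta(X_{ij})$ with $X_{i1}\otimes X_{1j}+X_{i2}\otimes X_{2j}$. The counit is immediate: $\varepsilon(X_{ij})=X_{ij}(1)=f_i(v_j)=\delta_{ij}$.

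For the antipode the cleanest route is to compute $S(X_{ij})=X_{ij}\circ S$ on each of the Hopf algebra generators $E,F,K^{\pm1}$ of $U_q(\mathfrak{sl}_2)$, using the antipode formulas from Proposition \ref{quanthopf} together with the explicit action on $V(1)$ from Proposition \ref{irreducibles} (namely $Ev_2=v_1$, $Ev_1=0$, $Fv_1=v_2$, $Fv_2=0$, $Kv_1=qv_1$, $Kv_2=q^{-1}v_2$). For example, $S(X_{12})(E)=X_{12}(-K^{-1}E)=-f_1(K^{-1}v_1)=-q^{-1}$, which matches $-q^{-1}X_{12}(E)=-q^{-1}$; checking the remaining generators confirms $S(X_{12})=-q^{-1}X_{12}$, and the other three formulas follow in the same way. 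As a cross-check, one may verify the antipode axiom $\sum_k X_{ik}S(X_{kj})=\delta_{ij}\cdot 1$ directly inside $O_q(SL_2)$, which reduces to the quantum determinant relation $X_{11}X_{22}-qX_{12}X_{21}=1$ and its consequence $X_{22}X_{11}-q^{-1}X_{12}X_{21}=1$, obtained by combining it with the commutation relation $X_{11}X_{22}-X_{22}X_{11}=(q-q^{-1})X_{12}X_{21}$ of Definition \ref{oq(sl2)}.

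The main subtlety lies in the antipode: the quantum corrections $-q^{-1}$ on $X_{12}$ and $-q$ on $X_{21}$ deform the naive classical formulas of Example \ref{O(sl2)}. These factors originate from the $K^{\pm1}$-twists in $S(E)=-K^{-1}E$ and $S(F)=-FK$ interacting with the weights of $v_1$ and $v_2$ under $K$, and they are precisely what is needed for the antipode axiom to close on the $q$-deformed commutation relations of Definition \ref{oq(sl2)}.
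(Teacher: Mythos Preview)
The paper does not supply its own proof for this proposition; it is stated with a citation to Brown--Goodearl and the Hopf structure is simply recorded. Your approach---reading off $\Delta$, $\varepsilon$, $S$ from the dual Hopf algebra structure on $U_q(\mathfrak{sl}_2)^\circ$ as described in Section~\ref{hopfdual}---is the standard route, and your computations for the comultiplication and counit are correct and cleanly written.

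One point in the antipode argument deserves tightening. You propose to verify $S(X_{12})=-q^{-1}X_{12}$ by checking that the two linear functionals agree on the algebra generators $E,F,K^{\pm1}$. But elements of $U_q(\mathfrak{sl}_2)^\circ$ are linear maps, not algebra homomorphisms, so agreement on a generating set of the algebra does not automatically force agreement on all of $U_q(\mathfrak{sl}_2)$. The argument is rescued by the observation that both $S(X_{ij})$ and the claimed right-hand sides factor through the image of $U_q(\mathfrak{sl}_2)$ in $\mathrm{End}\,V(1)\cong M_2(k)$ (since $V(1)^*\cong V(1)$, they share an annihilator), and the images of $E,F,K,K^{-1}$ already span $M_2(k)$ as a vector space; hence checking on those four elements does suffice, but this step should be made explicit. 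Alternatively, your ``cross-check'' via the antipode axiom $\sum_k X_{ik}S(X_{kj})=\delta_{ij}\cdot 1$ is in fact a self-contained proof once $\Delta$ and $\varepsilon$ are established, and could be promoted from a consistency check to the primary argument for $S$.
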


The duality between $O_q(SL_2)$ and $U_q(\mathfrak{sl}_2)$ is illustrated by the following proposition.

\begin{prop}\cite[Chapter VII.4]{kassel95}\label{pairingsl2}
There is a perfect Hopf pairing between $U_q(\mathfrak{sl}_2)$ and $O_q(SL_2)$.
\end{prop}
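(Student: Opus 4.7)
The plan is to build the pairing as evaluation coming from the inclusion $O_q(SL_2)\hookrightarrow U_q(\mathfrak{sl}_2)^{\circ}$, inherit the four Hopf pairing axioms essentially for free from the fact that $O_q(SL_2)$ is a Hopf subalgebra of the restricted dual, and then verify non-degeneracy on each side. By Definition \ref{quantisedcoordring} and Proposition \ref{finitgen}, $O_q(SL_2)\subseteq U_q(\mathfrak{sl}_2)^{\circ}\subseteq U_q(\mathfrak{sl}_2)^{*}$ is generated by the coordinate functions $X_{ij}=c_{f_i,v_j}$ of the two-dimensional simple module $V(1)$, where $\{v_1,v_2\}$ is a fixed basis with dual basis $\{f_1,f_2\}$. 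I would set
\begin{equation*}
(\cdot\,,\cdot)\colon U_q(\mathfrak{sl}_2)\times O_q(SL_2)\longrightarrow k,\qquad (u,\xi):=\xi(u),
\end{equation*}
which on generators reads $(u,X_{ij})=f_i(u\cdot v_j)$.

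For the four Hopf pairing axioms I would appeal directly to the Hopf structure on $U_q(\mathfrak{sl}_2)^{\circ}$ recorded in Section \ref{hopfdual}. Since the product in $O_q(SL_2)$ is the convolution product inherited from $U_q(\mathfrak{sl}_2)^{\circ}$,
\begin{equation*}
(h,\xi\eta)=(\xi\eta)(h)=\sum \xi(h_{(1)})\eta(h_{(2)})=\sum (h_{(1)},\xi)(h_{(2)},\eta),
\end{equation*}
while the comultiplication on $O_q(SL_2)$ is $\mu^{*}\restrict{U_q(\mathfrak{sl}_2)^\circ}$, which immediately gives $(gh,\xi)=\sum(g,\xi_{(1)})(h,\xi_{(2)})$. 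The unit/counit identities reduce to $\xi(1)=\varepsilon_{O_q}(\xi)$ and $(1,\xi)=\varepsilon_{U_q}^{*}(\xi)$, and the antipode compatibility follows from $S_{O_q}=S_{U_q}^{*}\restrict{O_q(SL_2)}$.

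For perfectness, injectivity of $O_q(SL_2)\to U_q(\mathfrak{sl}_2)^{*}$ is tautological because the pairing is evaluation. Conversely, if $u\in U_q(\mathfrak{sl}_2)$ annihilates all of $O_q(SL_2)$, then by Theorem \ref{generate} and Proposition \ref{finitgen} the coordinate functions of every simple type~1 module $V(n)$ lie in $O_q(SL_2)$, so $f(uv)=0$ for every $v\in V(n)$ and $f\in V(n)^{*}$; hence $u$ acts trivially on every $V(n)$. Faithfulness of the action on $\bigoplus_{n\geq 0}V(n)$ then forces $u=0$. This faithfulness is the main obstacle: one expands $u$ in the PBW-type basis $\{F^{i}K^{j}E^{k}\}$ supplied by Proposition \ref{triangulardecomp} and shows that any nonzero such combination detects a nonzero matrix coefficient on some sufficiently large $V(n)$, with the delicate step being the separation of the powers of $K$ via the distinct weights $q^{n}$ appearing on the different $V(n)$. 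I would dispatch this step by citing \cite[Chapter VII]{kassel95} rather than redoing the calculation.
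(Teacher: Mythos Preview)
Your argument is correct and, in fact, more carefully organised than the paper's own proof. The paper takes a concrete-values approach: it writes out the table of pairings $(E,X_{ij})$, $(F,X_{ij})$, $(K^{\pm 1},X_{ij})$ on generators, asserts that the bilinear form extends to a Hopf pairing, and then justifies perfectness by the observation that each generator on one side pairs nontrivially with some generator on the other, together with $q$ not being a root of unity, deferring the actual details to \cite[Chapter VII.4]{kassel95}. Your approach is structural: you define the pairing as evaluation via the inclusion $O_q(SL_2)\hookrightarrow U_q(\mathfrak{sl}_2)^{\circ}$, so the four Hopf pairing axioms are immediate consequences of the Hopf structure on the restricted dual (Section~\ref{hopfdual}), and you correctly isolate the genuine content of perfectness as faithfulness of $\bigoplus_{n}V(n)$, to be handled via the PBW basis and separation of $K$-eigenvalues. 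Both routes ultimately lean on Kassel for that last computation, but yours makes clearer \emph{why} the Hopf axioms hold and what the real obstruction to perfectness is; the paper's remark that nontrivial pairings exist between generators is, on its own, not a sufficient criterion for non-degeneracy, which your framing via faithfulness addresses directly.
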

\begin{proof}
The bilinear form  $(\cdot\,,\cdot):U_q(\mathfrak{sl}_2)\times O_q(SL_2)\rightarrow{k}$ acts on the generators of $U_q(\mathfrak{sl}_2)$ and $O_q(SL_2)$ as follows
\begin{align}
\begin{aligned}
(E,X_{11})&=0,  &  (E,X_{12})&=1,      &  (E,X_{21})&=0,   &  (E,X_{22})&=0\\
(F,X_{11})&=0,  &  (F,X_{12})&=0,      &  (F,X_{21})&=1,   &  (F,X_{22})&=0\\
(K,X_{11})&=q,  &  (K,X_{12})&=0,      &  (K,X_{21})&=0,   &  (K,X_{22})&=q^{-1}\\
(K^{-1},X_{11})&=q^{-1},  &  (K^{-1},X_{12})&=0, &  (K^{-1},X_{21})&=0,   &  (K^{-1},X_{22})&=q
\end{aligned}
\end{align}
This can be extended linearly to give the Hopf pairing between $U_q(\mathfrak{sl}_2)$ and $O_q(SL_2)$. We see that, as $q$ is not a root of unity, this pairing is perfect. For each generator of $U_q(\mathfrak{sl}_2)$, we can find a generator of $O_q(SL_2)$ such that their pairing is non-zero, and vice versa. See the cited text for more details.
\end{proof}

\section{Subgroups of Quantum Groups}\label{subgroupsofquantumgroups}
In this section, we consider a semisimple, connected, simply connected algebraic group $G$ with corresponding Lie algebra $\mathfrak{g}$. Their associated quantum groups are $O_q(G)$ and $U_q(\mathfrak{g})$ respectively.\\

The definition of a subgroup of a quantum group is quite hazy in the literature.  M\"uller in \cite{muller00} and Parshall and Wang in \cite{parshallwang91}, to name a few, consider subgroups of quantum groups as Hopf algebras satisfying some extra conditions. In this chapter, we discuss a definition of a subgroup of a quantum group discussed by Christodoulou in \cite{christodoulou16}. Her work particularly expands on work done by Letzter  \cite{letzter02} and M\"uller and Schneider \cite{mullerschneider99} on right coideal subalgebras.

\subsection{Coideal Subalgebras and Quotient Coalgebras}\label{coidealquotient}

We make the following definitions for a Hopf algebra $H=(H,\mu,\eta,\Delta,\varepsilon, S)$. We note that left/right can be interchanged consistently in our propositions and definitions in the following sections.

\begin{defn}\cite[Definition 2.1]{christodoulou16}
 A subspace $I$ of $H$ is a \textbfit{right coideal} if $\Delta(I)\subseteq I\otimes H$. A subspace $I$ of $H$ is a \textbfit{coideal} if $\Delta(I)\subseteq I\otimes H+H\otimes I\text{ and } \varepsilon(I)=0$.
\end{defn}

\begin{defn}
A subalgebra $A$ of $H$ is a \textbfit{right coideal subalgebra} of $H$ if $A$ is a right coideal of $H$.
\end{defn}
\begin{prop}\label{subalgebracoideal}\cite[Theorem 4.1]{letzter02}
If $\mathfrak{g}$ is a Lie algebra then the set of one-sided coideal subalgebras of $U(\mathfrak{g})$ is the set of enveloping algebras $U(\mathfrak{g'})$ for Lie subalgebras $\mathfrak{g'}$ of $\mathfrak{g}$.
\end{prop}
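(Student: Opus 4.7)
The plan is to prove both inclusions separately. For the forward direction, let $\mathfrak{g}'$ be a Lie subalgebra of $\mathfrak{g}$. The inclusion $\mathfrak{g}' \hookrightarrow U(\mathfrak{g})$ satisfies the universal property hypothesis, so we obtain an algebra morphism $U(\mathfrak{g}') \to U(\mathfrak{g})$; injectivity follows from the PBW theorem applied compatibly to $\mathfrak{g}' \subseteq \mathfrak{g}$, realising $U(\mathfrak{g}')$ as a subalgebra. Because $\Delta$ is an algebra morphism and $\Delta(x) = x \otimes 1 + 1 \otimes x \in U(\mathfrak{g}') \otimes U(\mathfrak{g}')$ for every generator $x \in \mathfrak{g}'$, we get $\Delta(U(\mathfrak{g}')) \subseteq U(\mathfrak{g}') \otimes U(\mathfrak{g}') \subseteq U(\mathfrak{g}') \otimes U(\mathfrak{g})$, and symmetrically on the left. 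Thus $U(\mathfrak{g}')$ is a one-sided coideal subalgebra (indeed a Hopf subalgebra, since $S(x) = -x \in \mathfrak{g}'$).

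For the reverse direction, let $A$ be a right coideal subalgebra of $U(\mathfrak{g})$. I would define
\begin{equation}
\mathfrak{g}' := \{\, x \in \mathfrak{g} \mid i_\mathfrak{g}(x) \in A \,\},
\end{equation}
identifying $\mathfrak{g}$ with its image in $U(\mathfrak{g})$. For $x, y \in \mathfrak{g}'$, the identity $[x,y] = xy - yx$ holds in $U(\mathfrak{g})$, and since $A$ is a subalgebra while $\mathfrak{g}$ is closed under the bracket, we have $[x,y] \in A \cap \mathfrak{g} = \mathfrak{g}'$, so $\mathfrak{g}'$ is a Lie subalgebra of $\mathfrak{g}$. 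Applying the universal property to $\mathfrak{g}' \hookrightarrow A$ yields an algebra morphism $U(\mathfrak{g}') \to A$ whose image is the subalgebra of $A$ generated by $\mathfrak{g}'$; this gives $U(\mathfrak{g}') \subseteq A$.

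The main obstacle is the reverse containment $A \subseteq U(\mathfrak{g}')$. The key idea is to exploit the cocommutativity of $U(\mathfrak{g})$: since $\Delta$ is symmetric on the generators $\mathfrak{g}$, we have $\tau \circ \Delta = \Delta$ on all of $U(\mathfrak{g})$, where $\tau$ is the tensor flip. For any $a \in A$, the right coideal condition gives $\Delta(a) \in A \otimes U(\mathfrak{g})$, while cocommutativity then forces $\Delta(a) = \tau \Delta(a) \in U(\mathfrak{g}) \otimes A$. A straightforward linear-algebra lemma, proved by extending a basis of $A$ to a basis of $U(\mathfrak{g})$ and comparing coefficients, shows
\begin{equation}
(A \otimes U(\mathfrak{g})) \cap (U(\mathfrak{g}) \otimes A) = A \otimes A,
\end{equation}
so in fact $\Delta(A) \subseteq A \otimes A$, i.e.\ $A$ is a sub-bialgebra. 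This sub-bialgebra is cocommutative (inherited from $U(\mathfrak{g})$) and connected as a coalgebra, since its coradical is contained in that of $U(\mathfrak{g})$, which equals $k \cdot 1$ ($U(\mathfrak{g})$ being pointed irreducible in characteristic zero). The Cartier--Milnor--Moore theorem then identifies $A \simeq U(P(A))$, where the primitives satisfy $P(A) = A \cap P(U(\mathfrak{g})) = A \cap \mathfrak{g} = \mathfrak{g}'$, completing the proof. It is worth noting that this argument is specific to the classical setting: cocommutativity fails for $U_q(\mathfrak{g})$, which is precisely why one-sided coideal subalgebras form a strictly richer class than Hopf subalgebras in the quantum case and serve as the correct notion of a subgroup there.
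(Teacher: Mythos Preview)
The paper does not actually prove this proposition; it is stated with a citation to \cite[Theorem 4.1]{letzter02} and no argument is supplied. So there is nothing in the paper to compare your proof against directly.

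That said, your argument is correct and is essentially the standard one. The forward direction is routine. For the reverse direction, the crucial observation is exactly the one you make: cocommutativity of $U(\mathfrak{g})$ forces any one-sided coideal subalgebra to be a genuine subcoalgebra, via the intersection identity $(A\otimes U(\mathfrak{g}))\cap(U(\mathfrak{g})\otimes A)=A\otimes A$. Once $A$ is a connected cocommutative sub-bialgebra in characteristic zero, the structure theorem (Cartier--Kostant--Milnor--Moore) gives $A\simeq U(P(A))$, and $P(A)=A\cap P(U(\mathfrak{g}))=A\cap\mathfrak{g}=\mathfrak{g}'$ finishes it. One small point worth making explicit: you invoke the theorem for a bialgebra rather than a Hopf algebra, but this is harmless since a connected bialgebra automatically admits an antipode (or, equivalently, the structure theorem already shows $A$ is generated as an algebra by its primitives $\mathfrak{g}'$, whence $A\subseteq U(\mathfrak{g}')$ without needing $S(A)\subseteq A$). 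Your closing remark that this argument breaks down for $U_q(\mathfrak{g})$ precisely because cocommutativity fails is apt and matches the paper's motivation for taking coideal subalgebras, rather than Hopf subalgebras, as the notion of quantum subgroup.
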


\begin{defn}\cite[Definition 2.3]{christodoulou16}
$C$ is a \textbfit{quotient left $H$-module coalgebra} if $C$ is the quotient of $H$ by a coideal and left ideal $I$.
\end{defn}

\begin{remark}
The coalgebra structure is given by comultiplication, $\bar{\Delta}(C)\rightarrow{C\otimes C}$ with $\bar{\Delta}(h+I)=\Delta(h)+I\otimes H+H\otimes I$ for $h\in H$, and counit $\bar{\varepsilon}:C\rightarrow{k}$, with $\bar{\varepsilon}(h+I)=\varepsilon(h)$ for $h\in H$. 
\end{remark}

We state the following proposition showing the one-to-one correspondence between right coideal subalgebras of $H$ and quotient left $H$-module coalgebras.

\begin{prop}\cite[Proposition 1]{takeuchi79}\label{prop2}
\begin{enumerate}
    \item Suppose that $A$ is a right coideal subalgebra of $H$. Let $A^+=A\cap {\normalfont{\textrm{Ker }}}\varepsilon$. Then $C=H/HA^+$ is a quotient left module coalgebra of $H$.
    \item If $C$ is a quotient left $H$-module coalgebra, denote by $\pi:H\rightarrow{C}$ the quotient map. Then, $A={}^CH=\{x\in H\mid \sum\pi(x_{(1)})\otimes x_{(2)}=\pi(1)\otimes x\}$ is a right coideal subalgebra of $H$.
\end{enumerate}
\end{prop}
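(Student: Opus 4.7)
The plan is to prove the two directions of the correspondence separately. For part (1), the goal is to verify that the subspace $HA^+$ is simultaneously a left ideal and a coideal of $H$, because then $C = H/HA^+$ automatically inherits a quotient coalgebra structure together with the residual left $H$-module structure. That $HA^+$ is a left ideal is immediate from its definition, and the counit vanishes on it because $\varepsilon(A^+) = 0$ by construction and $\varepsilon$ is multiplicative. The substantive step is the inclusion $\Delta(A^+) \subseteq A^+ \otimes H + H \otimes A^+$. To see this, take $a \in A^+$ and use the Sweedler rewriting
$$\Delta(a) = \sum \bigl(a_{(1)} - \varepsilon(a_{(1)})\,1_H\bigr) \otimes a_{(2)} \,+\, 1_H \otimes a,$$
noting that $A$ being a right coideal forces $a_{(1)} \in A$, so each $a_{(1)} - \varepsilon(a_{(1)})\,1_H$ lies in $A^+$, while the remaining summand $1_H \otimes a$ lies in $H \otimes A^+$. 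Multiplying on the left by $\Delta(h)$ then transports this to $\Delta(HA^+) \subseteq HA^+ \otimes H + H \otimes HA^+$.

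For part (2), the cleanest framing is to introduce the auxiliary linear map $\phi = (\pi \otimes \mathrm{id}_H) \circ \Delta : H \to C \otimes H$; by definition $A = {}^C H$ is the set on which $\phi$ coincides with the map $x \mapsto \pi(1) \otimes x$. To show $A$ is a subalgebra, note that $1_H \in A$ holds since $\Delta(1) = 1 \otimes 1$. For closure under multiplication, take $x, y \in A$ and expand $\phi(xy) = \sum \pi(x_{(1)} y_{(1)}) \otimes x_{(2)} y_{(2)}$. Using that $\pi$ intertwines the left $H$-actions (so $\pi(hh') = h \cdot \pi(h')$, which is part of the definition of a quotient left $H$-module coalgebra), this rewrites as $\sum x_{(1)} \cdot \pi(y_{(1)}) \otimes x_{(2)} y_{(2)}$. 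Substituting first $\sum \pi(y_{(1)}) \otimes y_{(2)} = \pi(1) \otimes y$ from $y \in A$, and then $\sum \pi(x_{(1)}) \otimes x_{(2)} = \pi(1) \otimes x$ from $x \in A$, collapses this to $\pi(1) \otimes xy$, as required.

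For the right coideal property, the strategy is to apply $\mathrm{id}_C \otimes \Delta$ to both sides of the defining identity $\sum \pi(x_{(1)}) \otimes x_{(2)} = \pi(1) \otimes x$ for $x \in A$, and to invoke coassociativity to shuffle the Sweedler indices. The resulting identity is exactly $(\phi \otimes \mathrm{id}_H)(\Delta(x)) = \pi(1) \otimes \Delta(x)$, which is the condition characterising $\Delta(x) \in A \otimes H$: over a field, $A \otimes H$ is precisely the kernel of $(\phi - \pi(1)\otimes{-}) \otimes \mathrm{id}_H$ acting on $H \otimes H$. I expect the trickiest part to be this last bookkeeping step, where one must keep clear whether a given Sweedler index refers to $\Delta$ applied to $x$ or to the iterated coproduct arising after applying $\mathrm{id}_C \otimes \Delta$; reserving fresh Sweedler indices $x_{(1)} \otimes x_{(2)} \otimes x_{(3)}$ for the triple coproduct, and invoking the coassociativity identity $(\Delta \otimes \mathrm{id})\Delta = (\mathrm{id} \otimes \Delta)\Delta$ explicitly, is what makes the argument compile cleanly.
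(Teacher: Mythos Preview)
The paper does not supply its own proof of this proposition; it is stated with a citation to Takeuchi and then used as a black box. So there is no in-paper argument to compare against, and your proposal should be judged on its own merits.

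Your argument is correct. For part~(1), the key identity
\[
\Delta(a) \;=\; \sum \bigl(a_{(1)} - \varepsilon(a_{(1)})\,1_H\bigr)\otimes a_{(2)} \;+\; 1_H\otimes a
\]
does exactly what is needed: the right coideal condition puts each $a_{(1)}$ in $A$, so the first summand lands in $A^+\otimes H$, and then multiplying by $\Delta(H)$ on the left gives $\Delta(HA^+)\subseteq HA^+\otimes H + H\otimes HA^+$. Together with $\varepsilon(HA^+)=0$ and the obvious left-ideal property, this is the full coideal check.

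For part~(2), your use of the auxiliary map $\phi=(\pi\otimes\mathrm{id}_H)\circ\Delta$ is the standard clean approach. The multiplication closure step is fine once you remember that $\pi$ is a left $H$-module map, so $\pi(x_{(1)}y_{(1)})=x_{(1)}\cdot\pi(y_{(1)})$; your two successive substitutions (first for $y$, then for $x$) go through because the relevant expression factors as a linear map applied to $\Delta(x)\otimes\bigl(\sum\pi(y_{(1)})\otimes y_{(2)}\bigr)$. For the coideal step, applying $\mathrm{id}_C\otimes\Delta$ and invoking coassociativity gives
\[
(\phi\otimes\mathrm{id}_H)(\Delta(x)) \;=\; \pi(1)\otimes\Delta(x),
\]
and since over a field the kernel of $(\phi-\pi(1)\otimes{-})\otimes\mathrm{id}_H$ on $H\otimes H$ is exactly $A\otimes H$, this yields $\Delta(x)\in A\otimes H$. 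That last linear-algebra fact (that $\ker(\psi\otimes\mathrm{id}_H)=\ker\psi\otimes H$ for a linear map $\psi$ between vector spaces) is worth stating explicitly in a final write-up, but you have identified it correctly.
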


Recall that there is a Hopf pairing $(\cdot,\cdot):U_q(\mathfrak{g})\times O_q(G)\rightarrow{k}$. For $a\in O_q(G)$, $\Delta(a)=\sum a_{(1)}\otimes a_{(2)}$ and so we can define a $U_q(\mathfrak{g})$-$U_q(\mathfrak{g})$-bimodule structure on $O_q(G)$ in the following way
\begin{equation}a\cdot u=\sum (u,a_{(1)})a_{(2)},\quad u\cdot a=\sum a_{(1)}(u,a_{(2)})\end{equation}
for $a\in O_q(G)$ and $u\in U_q(\mathfrak{g})$. Since multiplication in $O_q(G)$ is a convolution we have that
\begin{equation}(ab)\cdot u=\sum (a\cdot u_{(1)})(b\cdot u_{(2)})\quad u\cdot (ab)=\sum (u_{(1)}\cdot a)(u_{(2)}\cdot b)\end{equation}
for $a,b\in O_q(G)$ and $u\in U_q(\mathfrak{g})$.

\begin{prop}\cite[Theorem 3.1]{letzter02}\label{prop1}
For any right coideal $I$ of $U_q(\mathfrak{g})$ the subset $A=\{a\in O_q(G)\mid a\cdot u=\varepsilon(u)a \text{ for all } u\in I\} $ is a right coideal subalgebra of $O_q(G)$.
\end{prop}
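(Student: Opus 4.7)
The plan is to verify the two defining conditions directly: that $A$ is a unital subalgebra of $O_q(G)$ and that $\Delta(A)\subseteq A\otimes O_q(G)$.

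First, $1\in A$ is immediate: $\Delta(1)=1\otimes 1$ together with the Hopf pairing axiom $(u,1)=\varepsilon(u)$ gives $1\cdot u=\varepsilon(u)\cdot 1$. Closure under multiplication uses the formula $(ab)\cdot u=\sum(a\cdot u_{(1)})(b\cdot u_{(2)})$, already noted for the bimodule action on $O_q(G)$. Since $I$ is a \emph{right} coideal of $U_q(\mathfrak{g})$, we have $\Delta(u)\in I\otimes U_q(\mathfrak{g})$, so each $u_{(1)}$ lies in $I$. For $a\in A$ this forces $a\cdot u_{(1)}=\varepsilon(u_{(1)})a$; pulling the scalar $\varepsilon(u_{(1)})$ out and applying the counit identity $\sum\varepsilon(u_{(1)})u_{(2)}=u$ collapses the sum to $a(b\cdot u)$. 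If $b\in A$ as well, then $b\cdot u=\varepsilon(u)b$, and hence $(ab)\cdot u=\varepsilon(u)\,ab$, so $ab\in A$.

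For the right coideal property I would establish the key identity
\begin{equation*}
\sum (a_{(1)}\cdot u)\otimes a_{(2)} \;=\; \Delta(a\cdot u),
\end{equation*}
which follows from the definition $a\cdot u=\sum(u,a_{(1)})a_{(2)}$ together with coassociativity $\sum a_{(1)(1)}\otimes a_{(1)(2)}\otimes a_{(2)}=\sum a_{(1)}\otimes a_{(2)(1)}\otimes a_{(2)(2)}$. For $a\in A$ and $u\in I$ the right-hand side becomes $\varepsilon(u)\Delta(a)$, so
\begin{equation*}
\sum\bigl(a_{(1)}\cdot u-\varepsilon(u)a_{(1)}\bigr)\otimes a_{(2)}=0
\end{equation*}
in $O_q(G)\otimes O_q(G)$. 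Expanding $\Delta(a)=\sum_i x_i\otimes y_i$ with $\{y_i\}$ linearly independent then forces $x_i\cdot u=\varepsilon(u)x_i$ for every $i$ and every $u\in I$, i.e.\ each $x_i$ lies in $A$, and therefore $\Delta(a)\in A\otimes O_q(G)$.

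The main point to watch is that \emph{right} in ``right coideal of $U_q(\mathfrak{g})$'' is exactly what is needed: it delivers $u_{(1)}\in I$, which in turn drives the closure-under-products computation. The coideal step is essentially automatic once one spots the identity linking $\sum(a_{(1)}\cdot u)\otimes a_{(2)}$ with $\Delta(a\cdot u)$; this is the one small piece of inspiration required, and no other serious obstacle appears.
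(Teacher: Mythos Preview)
Your proof is correct and follows essentially the same route as the paper's: the subalgebra check via $(ab)\cdot u=\sum(a\cdot u_{(1)})(b\cdot u_{(2)})$ with $u_{(1)}\in I$, and the coideal check via the identity $\sum(a_{(1)}\cdot u)\otimes a_{(2)}=\Delta(a\cdot u)$ together with a linear-independence argument on the second tensor factors. Your treatment is in fact slightly more careful, since you explicitly rewrite $\Delta(a)=\sum_i x_i\otimes y_i$ with the $y_i$ independent rather than simply ``assuming the $a_{(2)}$ are linearly independent''.
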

\begin{proof}
We first show that $A$ is a subalgebra. Suppose that $a,b\in A$ and $u\in I$. We want to show that $ab\in A$.
\begin{align}
(ab)\cdot u &=\sum (a \cdot u_{(1)})(b\cdot u_{(2)})\nonumber\\
\intertext{Since $I$ is a right coideal of $U_q(\mathfrak{g})$, $u_{(1)}\in I$. So, since $a\in A$,}\nonumber
            &=\sum (\varepsilon(u_{(1)}) a)(b\cdot u_{(2)})\nonumber\\
            &= \sum a(b\cdot\varepsilon(u_{(1)})u_{(2)})
\intertext{Since $\sum \varepsilon(u_{(1)})u_{(2)}=u$,}               &= a(b\cdot u)\nonumber\\
            &= a\varepsilon(u)b\nonumber\\
            &= \varepsilon(u)(ab)  \nonumber
\end{align}

So $ab\in A$. Also, $1_{O_q(G)}\in A$ since $1_{O_q(G)}$ is the coordinate function of the trivial representation and hence, $1_{O_q(\mathfrak{g})}\cdot u=\varepsilon(u)$ for all $u\in I$. Therefore, $A$ is a subalgebra. \\

We now check that $A$ is a right coideal. Suppose that $\Delta(a)=\sum a_{(1)}\otimes a_{(2)}$ for $a\in A$. We want to show that $a_{(1)}\in A$. For $u\in I$ we have 
\begin{align}
\Delta(a\cdot u)&=\Delta(a)(u\otimes 1_{U_q(\mathfrak{g})})=\sum a_{(1)}\cdot u\otimes a_{(2)} \\
\intertext{and,}
\Delta(a\cdot u)&=\Delta(\varepsilon(u)a)=\sum \varepsilon(u)a_{(1)}\otimes a_{(2)}.
\end{align}

Therefore, if we assume that the $a_{(2)}$ are linearly independent
\begin{equation}
    a_{(1)}\cdot u=\varepsilon(u)a_{(1)}
\end{equation} So, $a_{(1)}\in A$. Hence $\Delta(A)\subseteq A\otimes O_q(G)$.
\end{proof}

\begin{remark}
The subalgebra $A$ is the set of right invariants of $I$ on $O_q(G)$ and is often denoted $O_q(G)^I$.
\end{remark}

We see from Propositions \ref{prop1} and \ref{prop2} that for any right coideal $I$ in $U_q(\mathfrak{g})$, we can define a right coideal subalgebra of $O_q(G)$, equivalently a quotient left $O_q(G)$-module coalgebra.

\subsection{Definition of a Subgroup of a Quantum Group}\label{subgroups}

Suppose that we have a subgroup $G'$ of $G$ with corresponding Lie subalgebra $\mathfrak{g'}$ of $\mathfrak{g}$. We want to find their corresponding quantum groups $O_q(G')$ and $U_q(\mathfrak{g'})$ respectively. The enveloping algebra $U(\mathfrak{g}')$ is a subalgebra of $U(\mathfrak{g})$ by Lemma \ref{subalgebracoideal}, moreover it is a Hopf subalgebra. Therefore, mirroring the classical case, it seems intuitive to first look at corresponding Hopf subalgebras of $U_q(\mathfrak{g})$ or, dually, quotient Hopf algebras of $O_q(G)$.

\begin{exmp}\label{borelllll}
Suppose that we have a Borel subgroup $G^{\geq 0}$ of $G$ with Borel Lie subalgebra $\mathfrak{g}^{\geq 0}=\mathfrak{h}\oplus\sum_{\alpha\in\Phi^+}\mathfrak{g}_\alpha$ of $\mathfrak{g}$, see Equation \ref{cartandecomp}. The corresponding universal enveloping algebra is $U^{\geq 0}(\mathfrak{g})$, with quantum analogue $U^{\geq 0}_q(\mathfrak{g})$ as defined in Section \ref{quantiseduniversalenvelopingalgebra}. $U^{\geq 0}_q(\mathfrak{g})$ is a Hopf subalgebra of $U_q(\mathfrak{g})$ since it is closed under $\Delta,S$. The quantised coordinate ring $O_q^{\geq 0}(G)$ can be constructed using $U_q^{\geq 0}(\mathfrak{g})$-modules as in Section \ref{thequantisedcoordinatering} and can be considered as the quantum subgroup corresponding to $G^{\geq 0}$.
\end{exmp}

In the above example, $U_q^{\geq 0}(\mathfrak{g})$ is a Hopf subalgebra of $U_q(\mathfrak{g})$. However, generally, $U_q(\mathfrak{g'})$ is not isomorphic to a Hopf subalgebra of $U_q(\mathfrak{g})$. Moreover, there are many subalgebras of $U_q(\mathfrak{g})$ which are not Hopf subalgebras but can  still be considered as quantisations of $U(\mathfrak{g'})$.

\begin{exmp}\label{example}
Suppose that we have a nilpotent subgroup $G^-$ of $G$ with nilpotent Lie subalgebra $\mathfrak{g}^-=\sum_{\alpha\in\Phi^-}\mathfrak{g}_\alpha$ of $\mathfrak{g}$. The corresponding universal enveloping algebra is $U^-(\mathfrak{g})$, with quantum analogue $U^-_q(\mathfrak{g})$ as defined in Section \ref{quantiseduniversalenvelopingalgebra}. However, $U^-_q(\mathfrak{g})$ is not a Hopf subalgebra of $U_q(\mathfrak{g})$ since $S(F_\alpha)=-F_\alpha K_\alpha\not\in U_q^-(\mathfrak{g})$ for each $\alpha\in\Pi$. 
\end{exmp}
\begin{remark}
We remark that $U_q^-(\mathfrak{g})$ is a right coideal subalgebra of $U_q(\mathfrak{g})$.
\end{remark}

The above example shows that defining subgroups of $U_q(\mathfrak{g})$ to be Hopf subalgebras is too restrictive and therefore we need to broaden our definition.

By Lemma \ref{subalgebracoideal}, $U(\mathfrak{g'})$ is a one-sided coideal subalgebra of $U(\mathfrak{g})$. Suppose, without loss of generality, that it is a left coideal subalgebra. We can often find a right coideal subalgebra of $U_q(\mathfrak{g})$ corresponding to $U(\mathfrak{g'})$ \cite[p. 9]{letzter02}. This gives rise to a right coideal subalgebra of $O_q(G)$ by Proposition \ref{prop1}, and by Proposition \ref{prop2} a corresponding quotient left $O_q(G)$-module coalgebra. This informs the following definition.
\begin{defn}
A \textbfit{subgroup of $O_q(G)$} is a quotient left $O_q(G)$-module coalgebra. 
\end{defn}

As $O_q(G)$ lies in the dual of $U_q(\mathfrak{g})$, we would like our definitions of subgroups to line up with the general ideal that `sub-objects' induce `quotient objects' in the dual. We have changed the definition of a subgroup of $U_q(\mathfrak{g})$ given in \cite{christodoulou16} to emphasise this duality.

\begin{defn}
A \textbfit{subgroup of $U_q(\mathfrak{g})$} is a right coideal subalgebra of $U_q(\mathfrak{g})$.
\end{defn}

\subsection{Subgroups of $U_q(\mathfrak{sl}_2)$}\label{subgroupsuq(sl2)}

We want to find subgroups of $U_q(\mathfrak{sl}_2)$.  In \cite{vocke18}, Vocke discusses right coideal subalgebras, equivalently subgroups, of $U_q(\mathfrak{g})$, and in particular finds all the right coideal subalgebras of $U_q(\mathfrak{sl}_2)$. Her work builds upon work done in \cite{heckenbergerkolb11} on right coideal subalgebras of $U_q(\mathfrak{g})$ containing $U_q^0(\mathfrak{g})$ \\

Vocke constructs a set of possible generating elements of right coideal subalgebras of $U_q(\mathfrak{sl}_2)$
\begin{equation}
\{K^j,EK^{-1},EK^{-1}+\lambda K^{-1},EK^{-1}+c_FF+c_KK^{-1}\}    
\end{equation}
up to symmetry in $E$ and $F$ with $j\in\mathbb{Z}$ and $\lambda,c_F,c_K\in k$.\\

The complete list of right coideal subalgebras of $U_q(\mathfrak{sl}_2)$ is
\begin{equation}
U_q(\mathfrak{sl}_2), \quad U_q^0(\mathfrak{sl}_2),\quad U^{\geq 0}_q(\mathfrak{sl}_2),\quad U^{\leq 0}_q(\mathfrak{sl}_2),
\end{equation}
And, the subalgebras generated by the following sets
\begin{equation}
\{EK^{-1},K^2,K^{-2},F\}   
\end{equation}
For $j\in\mathbb{Z}$
\begin{equation}
\{ EK^{-1},K^j\}, \qquad \{ F,K^j\},  
\end{equation}
For $\lambda,\lambda'\in k$
\begin{equation}
\{ EK^{-1}+\lambda K^{-1}\}, \qquad  \{ F+\lambda'K^{-1}\}
\end{equation}
For $c_F,c_K\in k$
\begin{equation}
    \{ EK^{-1}+c_FF+c_KK^{-1}\}
\end{equation}
And, for $\lambda,\lambda'\in k$ with $\lambda\lambda'=\frac{q^2}{(1-q^2)(q-q^{-1})}$ 
\begin{equation}
\{ EK^{-1}+\lambda K^{-1},F+\lambda'K^{-1}\}    
\end{equation}

\subsection{Subgroups of $O_q(SL_2)$}

We now want to find subgroups of $O_q(SL_2)$ corresponding to the right coideal subalgebras of $U_q(\mathfrak{sl}_2)$. One method is to find the set of invariants of the action of each right coideal $I$ of $U_q(\mathfrak{sl}_2)$ on $O_q(SL_2)$. By Proposition \ref{prop2}, this is a right coideal subalgebra of $O_q(SL_2)$, which gives rise to a quotient left $O_q(SL_2)$-module coalgebra.\\\

Consider the $U_q(\mathfrak{sl_2})$-$U_q(\mathfrak{sl}_2)$-bimodule action on $O_q(SL_2)$ discussed in Section \ref{coidealquotient}. Since $\Delta(X_{ij})=X_{i1}\otimes X_{1j}+X_{i2}\otimes X_{2j}$,
\begin{equation}X_{ij}\cdot u=(u,X_{i1})X_{1j}+(u,X_{i2})X_{2j}\end{equation}
and, using Proposition \ref{pairingsl2}, 
\begin{align}
\begin{aligned}
X_{1j}\cdot E&=X_{2j}, & X_{1j}\cdot F&=0,& X_{1j}\cdot K&=qX_{1j}, &X_{1j}\cdot K^{-1}&=q^{-1}X_{1j}\\
X_{2j}\cdot E&=0, & X_{2j}\cdot F&=X_{1j},& X_{2j}\cdot K&=q^{-1}X_{2j}, &X_{2j}\cdot K^{-1}&=qX_{2j}
\end{aligned}
\end{align}
for $j=1,2$. And, for $u,v\in U_q(\mathfrak{sl}_2)$, we have
\begin{align}
X_{ij}\cdot (uv)&=(uv,X_{i1})X_{1j}+(uv,X_{i2})X_{2j}
\intertext{with, by definition of the Hopf pairing,}
(uv,X_{ij})&=(u,X_{i1})(v,X_{1j})+(u,X_{i2})(v,X_{2j})
\end{align}

\begin{exmp}
Consider the case when $I$ is the right coideal subalgebra generated by $EK^{-1}$. Since $\varepsilon(EK^{-1})=0$, we want to find \begin{equation}
A=\{a\in O_q(SL_2)\mid a\cdot u=0 \text{ for all } u\in I\}    
\end{equation} 
As, \begin{equation}(EK^{-1},X_{11})=0,\,(EK^{-1},X_{12})=q,\,(EK^{-1},X_{21})=0,\, (EK^{-1},X_{22})=0
\end{equation}
we have
\begin{equation}X_{11}\cdot (EK^{-1})=qX_{21},\quad X_{12}\cdot (EK^{-1})=qX_{22},\quad X_{21}\cdot (EK^{-1})=0,\quad X_{22}\cdot (EK^{-1})=0\end{equation}
From \cite[Example I.11.8]{browngoodearl02}, we see that:
\begin{equation}
    \{X_{22}^iX_{11}^jX_{21}^k\mid i,j,k\in\mathbb{Z}_{\geq 0}\}\cup \{X_{22}^iX_{22}^lX_{21}^k\mid i,k,l\in\mathbb{Z}_{\geq 0}, l>0\}
\end{equation}
is a basis for $O_q(SL_2)$. Since $X_{21},X_{22}\in A$ but $X_{11},X_{12}\not\in A$, the only basis elements contained in $A$ are those containing $X_{22}$ or $X_{21}$. Therefore, $A$ is the ideal of $O_q(SL_2)$ generated by $X_{21},X_{22}$. This is a right coideal subalgebra of $O_q(SL_2)$ by Proposition \ref{prop1} and further, since $A\cap$ {\normalfont{Ker}} $\varepsilon =A$, 
\begin{equation}
O_q(SL_2)/\langle X_{21},X_{22}\rangle    
\end{equation}
is a subgroup of $O_q(SL_2)$ by Proposition \ref{prop2}.

\end{exmp}

We have seen that $U_q^\pm(\mathfrak{sl}_2)$, $U_q^0(\mathfrak{sl}_2)$, $U_q^{\geq 0}(\mathfrak{sl_2})$, $U_q^{\leq 0}(\mathfrak{sl_2})$ are subgroups of $U_q(\mathfrak{sl_2})$. We now discuss the corresponding subgroups of $O_q(SL_2)$.\\\

The Borel subgroups $SL_2^{\geq 0}$, $SL_2^{\leq 0}$ consist of upper, respectively lower, triangular matrices. The unipotent radicals $SL_2^+$, $SL_2^+$ in $SL_2^{\geq 0}$, $SL_2^{\leq 0}$ respectively consist of unipotent matrices.
The coordinate rings of these subgroups are
\begin{align}
\begin{aligned}
O^{\geq 0}(SL_2)&\simeq O(SL_2)/\langle X_{21}\rangle, &  O^+(SL_2)&\simeq O^{\geq 0}(SL_2)/\langle X_{11}-1,X_{22}-1\rangle,
\\O^{\leq 0}(SL_2)&\simeq O(SL_2)/\langle X_{12}\rangle,
 &  O^-(SL_2)&\simeq O^{\leq 0}(SL_2)/\langle X_{11}-1,X_{22}-1\rangle
\end{aligned}
\end{align}
The standard maximal torus $SL_2^0$ is the subgroup of $SL_2$ consisting of diagonal matrices. Its coordinate ring is defined by
\begin{equation}O^0(SL_2)=O(SL_2)/\langle X_{12},X_{21}\rangle\end{equation}
In analogy with the classical case, we make the following definitions \cite[p. 227- 228]{jaramillo14}
\begin{equation}
\begin{aligned}
O^{\geq 0}_q(SL_2)&=O_q(SL_2)/{\langle X_{21} \rangle},& \quad O_q^+(SL_2)&=O_q^{\geq 0}(SL_2)/\langle X_{11}-1,X_{22}-1\rangle\\
O^{\leq 0}_q(SL_2)&=O_q(SL_2)/{\langle X_{12} \rangle},&\quad O_q^-(SL_2)&= O_q^{\leq 0}(SL_2)/\langle X_{11}-1,X_{22}-1\rangle\\
\end{aligned}
\end{equation}
And, 
\begin{equation}
O_q^0(SL_2)=O_q(SL_2)/\langle X_{12},X_{21}\rangle    
\end{equation}
where we use the notation $\langle\quad\rangle$ to denote the left ideal generated by a set. It is an easy check to show that the ideals are coideals and therefore the above equations define quotient left $O_q(SL_2)$-module coalgebras, equivalently subgroups of $O_q(SL_2)$. We note that as $q\rightarrow{1}$, we obtain our classical coordinate rings. \\

\section{A Categorical Approach to Subgroups of Quantum Groups}\label{categoricalapproachtosubgroupsofquantumgroups}
The aim of this section is to provide a categorical characterisation of a subgroup of a quantum group. The basic definitions can be found in \cite{safronov18,maclane71}.

\subsection{Monoidal Categories}

Here we discuss a particular type of category known as a monoidal category. It can be considered as a `categorification' of the notion of a monoid, a set with an associative multiplication and an identity element. We loosely interpret `categorification' as the process of turning sets into categories by adding morphisms.

\begin{defn}\cite[Definition 2.2.8]{etingof15}
A \textbfit{monoidal category} $(\mathcal{C}, \otimes,I, \alpha, l, r)$ consists of:
\begin{itemize}
    \item A category $\mathcal{C}$,
    \item A bifunctor $\otimes:\mathcal{C}\times \mathcal{C}\rightarrow{\mathcal{C}}$,
    \item A unit object $I\in \mathcal{C}$,
    \item A natural isomorphism $\alpha$ with morphism $\alpha_{X,Y,Z}:(X\otimes Y)\otimes Z\rightarrow{X\otimes (Y\otimes Z)}$ for $X,Y,Z \in$ Ob $\mathcal{C}$,
    \item Natural isomorphisms $l$ and $r$ such that, for each $X\in$ Ob $\mathcal{C}$, we have morphisms: $l_X:I\otimes X\rightarrow{X}$ and $r_X:X\otimes I\rightarrow{X}$.
\end{itemize}
Such that the following diagrams
\begin{center}
{\begin{tikzpicture}
    [
      border rotated/.style = {shape border rotate=180}
    ]
    \node[
      regular polygon,
      regular polygon sides=5,
      border rotated,
      minimum width=70mm,
    ] (PG) {}
      (PG.corner 1) node (PG9) {$X\otimes((Y\otimes Z)\otimes W$)}
      (PG.corner 2) node (PG3) {$X\otimes (Y\otimes (Z\otimes W$))}
      (PG.corner 3) node (PG2) {$(X\otimes Y)\otimes (Z\otimes W)$}
      (PG.corner 4) node (PG1) {$((X\otimes Y)\otimes Z)\otimes W$}
      (PG.corner 5) node (PG0) {$(X\otimes (Y\otimes Z))\otimes W$}
    ;
     \draw[transform canvas={yshift=0.5ex},->] (PG1) -- (PG2) node [midway,above] {\footnotesize $\alpha_{X\otimes Y,Z,W}$};
     \draw[->] (PG0) -- (PG9) node [midway,sloped,below] {\footnotesize $\alpha_{X,Y\otimes Z,W}$};
     \draw[->] (PG1) -- (PG0) node [midway,sloped,above] {\footnotesize $\alpha_{X,Y,Z}\otimes \text{id}_W$};
     \draw[->] (PG2) -- (PG3) node [midway,sloped,above] {\footnotesize $\alpha_{X,Y,Z\otimes W}$};
     \draw[->] (PG9) -- (PG3) node [midway,sloped,below] {\footnotesize $\text{id}_X \otimes \alpha_{Y,Z,W}$};
\end{tikzpicture}}
The Pentagonal Identity
\end{center}
and
\begin{center}
\begin{tikzcd}
    (X\otimes I)\otimes Y \arrow{rr}{ \alpha_{X,I,Y}} \arrow[swap]{dr}{ r_X\otimes \text{id}_Y} & & X\otimes(I\otimes Y) \arrow{dl}{ \text{id}_X\otimes l_Y } \\
    & X\otimes Y
\end{tikzcd}

The Triangular Identity
\end{center}
commute.
\end{defn}

\begin{exmp}\label{repg} If $G$ is a group, then {\normalfont{\textbf{Rep}$(G)$}} is the category with representations of $G$ as its objects and \textit{equivariant maps} as morphisms. An equivariant map between two representations $(V,\rho)$ and $(W,\mu)$ is a linear map $f:V\rightarrow{W}$ satisfying,
    \begin{equation}f\circ\rho(g)=\mu(g)\circ f\end{equation}
    for all $g\in G$.
The category {\normalfont{\textbf{Rep$(G$)}}} can be given the structure of a monoidal category. We define $\otimes$ to be the tensor product of representations. For $(V,\rho)$ and $(W,\mu)$ we have
    \begin{equation}\rho\otimes\mu:G\rightarrow{GL(V\otimes W)},\quad(\rho\otimes\mu)(g)=\rho(g)\otimes\mu(g)\end{equation}
    The unit object, $I$, is the trivial representation $I=(k,\rho)$ where $\rho(g)v=v$ for all $g\in G, v\in k$. The associativity and identity conditions follow from properties of the tensor product.
\end{exmp}

\subsection{Module Categories}

We now consider module categories, which can be interpreted as the `categorification' of the notion of a module over a monoid. 

\begin{defn}\cite[Definition 7.1.2]{etingof15}
A \textbfit{(left) module category} $(\mathcal{M}, \otimes, \mu, l)$ over a monoidal category $\mathcal{C}$ consists of:
\begin{itemize}
    \item A category $\mathcal{M}$,
    \item A bifunctor $\otimes:\mathcal{C\times M}\rightarrow{\mathcal{M}}$,
    \item A natural isomorphism $\mu$ with morphism $\mu_{X,Y,M}:(X\otimes Y)\otimes M\rightarrow{X\otimes(Y\otimes M)}$ for $X,Y\in\mathcal{C}$ and $M\in\mathcal{M}$,
    \item A natural isomorphism $l$ such that, for each $M\in$ Ob $\mathcal{M}$, we have a morphism $l_M:I\otimes M\rightarrow{M}$.
\end{itemize}    
Such that the following diagrams
\begin{center}
\begin{tikzpicture}
    [
      border rotated/.style = {shape border rotate=180}
    ]
    \node[
      regular polygon,
      regular polygon sides=5,
      border rotated,
      minimum width=70mm,
    ] (PG) {}
      (PG.corner 1) node (PG9) {$X\otimes((Y\otimes Z)\otimes M$)}
      (PG.corner 2) node (PG3) {$X\otimes (Y\otimes (Z\otimes M$))}
      (PG.corner 3) node (PG2) {$(X\otimes Y)\otimes (Z\otimes M)$}
      (PG.corner 4) node (PG1) {$((X\otimes Y)\otimes Z)\otimes M$}
      (PG.corner 5) node (PG0) {$(X\otimes (Y\otimes Z))\otimes M$}
    ;
     \draw[transform canvas={yshift=0.5ex},->] (PG1) -- (PG2) node [midway,above] {\footnotesize $\mu_{X\otimes Y,Z,M}$};
     \draw[->] (PG0) -- (PG9) node [midway,sloped,below] {\footnotesize $\mu_{X,Y\otimes Z,M}$};
     \draw[->] (PG1) -- (PG0) node [midway,sloped,above] {\footnotesize $\mu_{X,Y,Z}\otimes \text{id}_M$};
     \draw[->] (PG2) -- (PG3) node [midway,sloped,above] {\footnotesize $\mu_{X,Y,Z\otimes M}$};
     \draw[->] (PG9) -- (PG3) node [midway,sloped,below] {\footnotesize $\text{id}_X \otimes \mu_{Y,Z,M}$};
\end{tikzpicture}
\end{center}
and
\medskip
\begin{center}
\begin{tikzcd}
    (X\otimes I)\otimes M \arrow{rr}{\mu_{X,I,M}} \arrow[swap]{dr}{r_X\otimes \text{id}_M} & & X\otimes(I\otimes M) \arrow{dl}{\text{id}_X\otimes l_M } \\
    & X\otimes M
\end{tikzcd}

\end{center}
commute.
\end{defn}

\begin{prop}\label{rephrepg}
If $G$ is a finite group and $H$ is a subgroup of $G$, then \normalfont{\textbf{Rep}$(H)$} \textit{is a module category over} \normalfont{\textbf{Rep}$(G)$}.
\end{prop}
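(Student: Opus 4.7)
The plan is to equip $\mathbf{Rep}(H)$ with a left action of the monoidal category $\mathbf{Rep}(G)$ by exploiting the restriction of scalars along the inclusion $H \hookrightarrow G$. Define the bifunctor $\otimes\colon \mathbf{Rep}(G)\times \mathbf{Rep}(H) \to \mathbf{Rep}(H)$ on objects by
\[
(V,\rho)\otimes (M,\sigma) = \bigl(V\otimes_k M,\ h\mapsto \rho(h)\otimes \sigma(h)\bigr),
\]
where on the left $\rho(h)$ really means $\rho$ restricted to $H$. On morphisms, send a pair $(f,g)$ of equivariant maps to the linear tensor product $f\otimes g$; this is $H$-equivariant because $f$ is $G$-equivariant and hence $H$-equivariant upon restriction, while $g$ is $H$-equivariant by hypothesis. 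Functoriality in each variable follows from functoriality of the usual tensor product of linear maps.

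Next, transport the associator and unitor from the underlying vector space category. Take $\mu_{X,Y,M}\colon (X\otimes Y)\otimes M \to X\otimes (Y\otimes M)$ to be the canonical $k$-linear associator, and $l_M\colon I\otimes M \to M$ to be the canonical isomorphism $k\otimes M\to M$, where $I=(k,\mathrm{triv})$ is the unit object of $\mathbf{Rep}(G)$. I must check that these linear maps are $H$-equivariant. For $\mu$ this is automatic because the $H$-action on both sides is the diagonal action $h\mapsto \rho_X(h)\otimes \rho_Y(h)\otimes \sigma_M(h)$, and the linear associator commutes with such diagonal actions. For $l_M$, the $H$-action on $I\otimes M$ is $h\mapsto \mathrm{id}_k\otimes \sigma_M(h)$, which clearly corresponds to $\sigma_M(h)$ on $M$ under the canonical identification $k\otimes M\cong M$. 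Naturality of $\mu$ and $l$ in all variables is inherited directly from the vector space case.

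Finally, the pentagon and triangle axioms for the module category reduce to the corresponding axioms in $\mathbf{Vect}_k$: since all the structure isomorphisms are literally the underlying linear associators and unitors, and both diagrams are diagrams of $k$-linear maps between tensor products of vector spaces, commutativity follows at once from the coherence theorem for $\mathbf{Vect}_k$.

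The construction presents no serious obstacle — the only substantive point is the observation that restricting a $G$-representation along $H\hookrightarrow G$ turns a $G$-equivariant linear map into an $H$-equivariant one, so that the diagonal action really does make $V\otimes M$ into an $H$-representation and all of $\mathbf{Rep}(G)$'s coherence data descends. I note that finiteness of $G$ is in fact unused in the argument; the hypothesis becomes relevant only when one later wants to relate this module category to comodules over the finite-dimensional Hopf algebra $O(G)$, which is the motivation for the analogous statement in the quantum setting.
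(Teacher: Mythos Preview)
Your proof is correct and follows essentially the same approach as the paper: define the action bifunctor via restriction along $H\hookrightarrow G$ followed by the ordinary tensor product of $H$-representations, and inherit the coherence isomorphisms from $\mathbf{Vect}_k$. Your version is simply more explicit than the paper's---you actually verify $H$-equivariance of the associator and unitor and appeal to coherence in $\mathbf{Vect}_k$, whereas the paper only says ``the associativity and identity conditions are satisfied by properties of the tensor product''---and your closing remark that finiteness of $G$ plays no role in the argument is a correct and worthwhile observation.
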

\begin{proof}
Define the restriction functor
\begin{equation}
\text{Res}^G_H:\textbf{Rep$(G)$}\rightarrow{\textbf{Rep$(H)$}},\quad \text{Res}^G_H(\rho)=\rho\restrict{H}   
\end{equation}We define our bifunctor 
$\otimes_{\textbf{Rep}(H)}:\textbf{Rep}(G)\times \textbf{Rep}(H)\rightarrow{\textbf{Rep}(H)}$ by \begin{equation}
\otimes_{\textbf{Rep}(H)}(\rho\times \mu)=\text{Res}^G_H(\rho)\otimes \mu  
\end{equation} where $\otimes$ is the tensor product of representations. We see that $\text{Res}^G_H(\rho)\otimes \mu$ is a representation of $H$. The associativity and identity conditions are satisfied by properties of the tensor product. 
\end{proof}

\subsection{The Categories of Modules and Comodules}

Let $(A,\mu,\eta)$ denote an algebra and let $(C,\Delta, \varepsilon)$ denote a coalgebra.

\begin{defn}
The \textbfit{category of right $A$-modules}  $\mathcal{M}_A$ is the category whose objects are right $A$-modules and whose morphisms are module homomorphisms between right $A$-modules. The category of left $A$-modules is denoted ${}_A\mathcal{M}$.
\end{defn}
\begin{defn}
The \textbfit{category of right $C$-comodules}  $\mathcal{M}^C$ is the category whose objects are right $C$-comodules and whose morphisms are comodule homomorphisms between right $C$-modules. The category of left $C$-comodules is denoted ${}^C\mathcal{M}$.
\end{defn}

\begin{prop}\cite[Proposition 4.6]{joyalstreet91}\label{equivcatrep}
The category of representations of $G$, $\normalfont{\textbf{Rep}}(G)$, is equivalent to the category $\mathcal{M}^{O(G)}$.
\end{prop}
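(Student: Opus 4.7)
The plan is to construct mutually inverse functors $\Phi\colon \textbf{Rep}(G) \to \mathcal{M}^{O(G)}$ and $\Psi\colon \mathcal{M}^{O(G)} \to \textbf{Rep}(G)$ that act as the identity on underlying linear maps of morphisms. The essential input is the observation that, for $V$ finite-dimensional, a morphism of varieties $\rho\colon G \to \mathrm{End}(V)$ is the same data as a linear map $V \to V \otimes O(G)$, under the identification $\mathrm{End}(V) \otimes O(G) \cong \mathrm{Hom}_{\mathrm{Var}}(G, \mathrm{End}(V))$. Given a (rational) representation $(V,\rho)$, I define $\Delta_V$ by $\Delta_V(v)(g) = \rho(g)v$ under this identification; concretely, in a basis $\{v_i\}$ with $\rho(g)v_j = \sum_i \rho_{ij}(g) v_i$, this reads $\Delta_V(v_j) = \sum_i v_i \otimes \rho_{ij}$. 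Conversely, given a right comodule $(V, \Delta_V)$ with $\Delta_V(v) = \sum v_{(0)} \otimes v_{(1)}$ in Sweedler notation, I recover a representation by $\rho(g)v = \sum v_{(0)}\,v_{(1)}(g)$, which is well-defined because each $v_{(1)} \in O(G)$ is a regular function.

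The core verification is to match the comodule axioms against the group-representation axioms using the Hopf structure on $O(G)$ from Example \ref{exmp1}. Since $\Delta(f)(g,h) = f(gh)$, coassociativity
\[ (\Delta_V \otimes \mathrm{id}_{O(G)}) \circ \Delta_V = (\mathrm{id}_V \otimes \Delta) \circ \Delta_V \]
unpacks, under the above identification, to the multiplicativity $\rho(gh)v = \rho(g)\rho(h)v$; and since $\varepsilon(f) = f(e)$, the counit axiom $\sum v_{(0)}\varepsilon(v_{(1)}) = v$ becomes $\rho(e)v = v$. On morphisms, the equivariance condition $f \circ \rho_V(g) = \rho_W(g) \circ f$ and the comodule compatibility $(f \otimes \mathrm{id}_{O(G)}) \circ \Delta_V = \Delta_W \circ f$ are interchangeable via exactly the same translation, so $\Phi$ and $\Psi$ are well-defined functors, and because their object-level constructions invert each other at the level of matrix coefficients, $\Phi\Psi$ and $\Psi\Phi$ equal the identity tautologically.

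The main obstacle is handling infinite-dimensional objects, since for $V$ infinite-dimensional the identification $\mathrm{End}(V) \otimes O(G) \cong \mathrm{Hom}_{\mathrm{Var}}(G, \mathrm{End}(V))$ no longer holds; here $\textbf{Rep}(G)$ must be read as the category of rational representations. The remedy is the fundamental theorem on comodules: every object of $\mathcal{M}^{O(G)}$ is a filtered union of its finite-dimensional subcomodules, and dually every rational $G$-representation is locally finite. The finite-dimensional equivalence established above then extends along filtered colimits to deliver the full equivalence of categories.
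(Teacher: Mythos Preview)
The paper does not supply its own proof of this proposition; it is stated with a citation to \cite[Proposition 4.6]{joyalstreet91} and used as a black box. Your argument is the standard one and is correct: matrix coefficients of a rational representation give a right $O(G)$-comodule structure, evaluation at group elements recovers the representation, and the group axioms match the comodule axioms via the Hopf structure maps $\Delta(f)(g,h)=f(gh)$ and $\varepsilon(f)=f(e)$. Your remark that $\textbf{Rep}(G)$ must be read as rational representations, and your appeal to local finiteness (the fundamental theorem on comodules) to pass from the finite-dimensional case to the general one, are exactly the points that need attention and you have handled them properly. There is nothing to compare against in the paper itself.
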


\begin{defn}\cite[p. 90]{riehl16}
A functor $F$ \textbfit{preserves and reflects} short exact sequences if  $0\rightarrow{A}\rightarrow{B}\rightarrow{C}\rightarrow{0}$ is a short exact sequence if and only if $0\rightarrow{F(A)}\rightarrow{F(B)}\rightarrow{F(C)}\rightarrow{0}$ is a short exact sequence.
\end{defn}

\begin{defn}\cite[p. 160]{mullerschneider99}
We say that a right comodule $N$ over a coalgebra $C$ is \textbfit{faithfully coflat} if the \textit{cotensor product} $N\underset{C}{\square}*:{}^C\mathcal{M}\rightarrow {}_k\mathcal{M}$ preserves and reflects short exact sequences. For $(N_1,\Delta_{N_1})\in \mathcal{M}^C$ and $(N_2,\Delta_{N_2})\in {}^C\mathcal{M}$, the cotensor product $N_1\underset{C}{\square} N_2$ is the kernel of 
\begin{equation}\Delta_{N_1}\otimes id_{N_1} -id_{N_2}\otimes \Delta_{N_2}:{N_1}\otimes N_2 \rightarrow{N_1\otimes C\otimes N_2}\end{equation}
\end{defn}

Suppose that we have a Hopf algebra $H$ with multiplication $\mu$. We have the following proposition regarding the category of right $H$-comodules.

\begin{prop}\label{catcomodules}\cite[Example 3.1.3]{christodoulou16}
$\mathcal{M}^H$ is a monoidal category. 
\end{prop}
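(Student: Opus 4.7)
The plan is to exhibit the monoidal structure on $\mathcal{M}^H$ explicitly and reduce the axiom checks to properties of the underlying vector space monoidal structure together with the bialgebra axioms on $H$. The key construction is the tensor product of two right $H$-comodules $(M,\Delta_M)$ and $(N,\Delta_N)$: I would take the vector space $M\otimes N$ equipped with the coaction
\begin{equation}
\Delta_{M\otimes N}:M\otimes N\rightarrow (M\otimes N)\otimes H,\qquad m\otimes n\mapsto \sum m_{(0)}\otimes n_{(0)}\otimes m_{(1)}n_{(1)},
\end{equation}
where the product $m_{(1)}n_{(1)}$ uses the multiplication $\mu$ of $H$. This is where the bialgebra structure (not just a coalgebra) is essential.

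First I would verify that $(M\otimes N,\Delta_{M\otimes N})$ is indeed a right $H$-comodule: coassociativity reduces to a diagram chase that uses coassociativity of $\Delta_M$ and $\Delta_N$ together with associativity of $\mu$, while the counit axiom uses the counit axioms for $M$ and $N$ together with $\varepsilon(m_{(1)}n_{(1)})=\varepsilon(m_{(1)})\varepsilon(n_{(1)})$, which holds because $\varepsilon$ is an algebra morphism. Next I would check that $\otimes$ is a bifunctor by verifying that if $f:M\to M'$ and $g:N\to N'$ are comodule morphisms, then $f\otimes g:M\otimes N\to M'\otimes N'$ is also a comodule morphism; this is an immediate tensor-product computation.

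For the unit object, I would take $I=k$ with coaction $\Delta_k:k\to k\otimes H$, $c\mapsto c\otimes 1_H$, noting this is a comodule thanks to $\varepsilon(1_H)=1$ and the bialgebra compatibility. The associator $\alpha_{X,Y,Z}$ and unitors $l_X,r_X$ are taken to be the canonical vector space isomorphisms. The substantive thing to check is that each of these is a morphism in $\mathcal{M}^H$, i.e.\ commutes with the coactions. For the associator this comes down to the identity $(m_{(1)}n_{(1)})p_{(1)}=m_{(1)}(n_{(1)}p_{(1)})$ in $H$, i.e.\ associativity of $\mu$; for the unitors one uses $1_H\cdot h=h=h\cdot 1_H$, i.e.\ the unit axiom of the algebra structure on $H$.

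Finally, the pentagon and triangle axioms: since the associator and unitors are, on the level of underlying vector spaces, exactly the canonical associator and unitors of $\mathbf{Vect}_k$, and since these already satisfy the pentagon and triangle axioms in $\mathbf{Vect}_k$, the axioms hold in $\mathcal{M}^H$ automatically (the forgetful functor $\mathcal{M}^H\to\mathbf{Vect}_k$ is faithful and detects commutativity of diagrams built from $\alpha,l,r$). The only real obstacle in the argument is the well-definedness of the coaction on $M\otimes N$: this is where one must genuinely use that $H$ is a bialgebra, and it is the one calculation worth writing out carefully. Everything else is either a transport of structure from $\mathbf{Vect}_k$ or a direct application of the Hopf algebra axioms (the antipode $S$ is not needed for the monoidal structure itself, only for additional features such as duals).
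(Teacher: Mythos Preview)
Your proposal is correct and follows essentially the same approach as the paper: the paper defines the tensor coaction by the composite $(1\otimes\mu)\circ(1\otimes\tau\otimes 1)\circ(\Delta_{N_1}\otimes\Delta_{N_2})$, which in Sweedler notation is exactly your $m\otimes n\mapsto\sum m_{(0)}\otimes n_{(0)}\otimes m_{(1)}n_{(1)}$, and then remarks that the associativity and identity conditions are inherited from the tensor product of vector spaces. Your write-up is in fact more thorough than the paper's sketch, since you explicitly identify the unit object, verify that the associator and unitors are comodule morphisms, and explain why the pentagon and triangle axioms transfer from $\mathbf{Vect}_k$.
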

\begin{proof}
We define the product $\otimes:\mathcal{M}^H\times \mathcal{M}^H \rightarrow{\mathcal{M}^H}$ to be the usual tensor product of vector spaces along with the following coaction. For right $H$-comodules $N_1,N_2$ with coactions $\Delta_{N_1},\Delta_{N_2}$ respectively, we define the coaction $\Delta_{N_1\otimes N_2}$ of $N_1\otimes N_2$ on $H$ to be the map
\begin{equation}\Delta_{N_1\otimes N_2}=(1_{N_1\otimes N_2}\otimes\mu )\circ(1_{N_1}\otimes \tau_{H,N_2}\otimes 1_H)\circ(\Delta_{N_1}\otimes \Delta_{N_2})=f_3\otimes f_2\otimes f_1\end{equation}
where $\tau_{H,N_2}(h\otimes n_2)=n_2\otimes h$. To illustrate this coaction,
\begin{equation}(N_1\otimes N_2)\xrightarrow{f_1}{N_1\otimes H\otimes N_2\otimes H}\xrightarrow{f_2}{N_1\otimes N_2\otimes H\otimes H}\xrightarrow{f_3}N_1\otimes N_2\otimes H\end{equation}
The associativity and identity conditions are inherited from those of the tensor product.  
\end{proof}

\begin{remark}
Similarly, the category of left $H$-modules is a monoidal category. For left $H$-modules $M_1,M_2$, the action of $M_1\otimes M_2$ on $H$ is given by
\begin{equation}h\cdot(m_1\otimes m_2)=\sum h_{(1)}\cdot m_1\otimes h_{(2)}\cdot m_2\end{equation} for $h\in H,m_1\in M_1,m_2\in M_2$.
\end{remark}

Suppose that $A$ is a right coideal subalgebra of $H$,

\begin{defn}\cite[p. 454]{takeuchi79}
Denote by $\mathcal{M}^H_A$ the category whose objects are right $A$-modules, right $H$-comodules such that $\mu_A:M\otimes A\rightarrow{M}$ is a map of $H$-comodules, and whose morphisms are right $A$-module homomorphisms, right $H$-comodule homomorphisms.
\end{defn}

\subsection{Relationship between Subgroups of Quantum Groups and Module Categories}\label{subgroupcategories}

In Proposition \ref{rephrepg}, we showed that, for a subgroup $H$ of $G$, $\textbf{\textrm{Rep}}(H)$ is a module category over the category $\textbf{\textrm{Rep}}(G)$. Since $\textbf{\textrm{Rep}}(G)$ is equivalent to $\mathcal{M}^{O(G)}$ by Proposition \ref{equivcatrep}, $\mathcal{M}^{O(H)}$ is a module category over $\mathcal{M}^{O(G)}$. We expect to find similar results for the categories of comodules over quantum groups. In \cite{christodoulou16}, Christodoulou considers the quantum version of this result for reductive subgroups.
\begin{defn}\label{reductivesubgroup}
A \textbfit{reductive subgroup} $C$ of $O_q(G)$ is a subgroup such that $O_q(G)$ is faithfully coflat over $C$.
\end{defn}

\begin{remark}
In \cite[Definition 3.3]{christodoulou16},  Christodoulou defines subgroups of $O_q(G)$ and $U_q(\mathfrak{g})$ to be what we instead call `reductive subgroups' of $O_q(G)$. We broadened her definition of a subgroup to include subgroups such as the Borel subgroups, which do not satisfy the faithful coflatness condition.
\end{remark}
We state the following results, adapted from \cite{christodoulou16}, which provide us with our categorical characterisations of quantum subgroups. We note that \cite[Proposition 6.12]{christodoulou16} can be extended to all subgroups of $O_q(G)$.

\begin{thm} \cite[Proposition 6.12]{christodoulou16}\label{modcatsub}
If $C$ is a subgroup of $O_q(G)$, then $\mathcal{M}^C$ is a module category over $\mathcal{M}^{O_q(G)}$.
\end{thm}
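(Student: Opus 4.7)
The plan is to build the module category structure by mimicking the monoidal structure on $\mathcal{M}^{O_q(G)}$ from Proposition \ref{catcomodules}, replacing the multiplication $\mu: O_q(G) \otimes O_q(G) \to O_q(G)$ with the left action of $O_q(G)$ on $C$. Write $C = O_q(G)/I$ with $I$ a coideal and left ideal, and let $\pi: O_q(G) \to C$ be the quotient map. Since $I$ is a left ideal, multiplication in $O_q(G)$ descends to a left action $\cdot: O_q(G) \otimes C \to C$ with $h \cdot \pi(h') = \pi(hh')$, and since $I$ is a coideal, $\pi$ is a coalgebra map. Because $\Delta$ is an algebra morphism on $O_q(G)$, a short computation then gives the key compatibility
\begin{equation}
\bar\Delta(h \cdot c) = \sum (h_{(1)} \cdot c_{(1)}) \otimes (h_{(2)} \cdot c_{(2)}), \qquad \bar\varepsilon(h \cdot c) = \varepsilon(h)\,\bar\varepsilon(c),
\end{equation}
i.e., $\cdot$ is itself a coalgebra morphism with respect to the tensor coalgebra structure on $O_q(G) \otimes C$.

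With this in hand, I define the bifunctor $\otimes: \mathcal{M}^{O_q(G)} \times \mathcal{M}^C \to \mathcal{M}^C$ on objects to be the usual tensor product of vector spaces, equipped, for $(N,\Delta_N) \in \mathcal{M}^{O_q(G)}$ and $(M,\Delta_M) \in \mathcal{M}^C$, with the coaction
\begin{equation}
\Delta_{N\otimes M} = (\mathrm{id}_N \otimes \mathrm{id}_M \otimes \,\cdot\,) \circ (\mathrm{id}_N \otimes \tau_{O_q(G), M} \otimes \mathrm{id}_C) \circ (\Delta_N \otimes \Delta_M),
\end{equation}
which in Sweedler notation reads $n \otimes m \mapsto \sum n_{(0)} \otimes m_{(0)} \otimes (n_{(1)} \cdot m_{(1)})$. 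Coassociativity then reduces to verifying that applying $\bar\Delta$ to the last factor and applying $\Delta_{N\otimes M}$ to the first two factors produce the same element of $N \otimes M \otimes C \otimes C$; this follows by combining coassociativity of $\Delta_N$ and $\Delta_M$ with the compatibility displayed above. The counit axiom follows immediately from $\bar\varepsilon(h \cdot c) = \varepsilon(h)\bar\varepsilon(c)$ together with the counit axioms for $\Delta_N$ and $\Delta_M$. Bifunctoriality on morphisms is routine, since a tensor product of comodule maps is again a comodule map for this coaction.

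For the coherence data, take $\mu_{N_1,N_2,M}$ and $l_M$ to be the underlying vector space associator and unitor, with the unit object being $k$ endowed with its canonical $O_q(G)$-coaction $1 \mapsto 1 \otimes 1_{O_q(G)}$. To check that these are $C$-comodule morphisms, one verifies that both $((N_1 \otimes N_2) \otimes M)$ and $(N_1 \otimes (N_2 \otimes M))$ carry the coaction $n_1 \otimes n_2 \otimes m \mapsto \sum (n_1)_{(0)} \otimes (n_2)_{(0)} \otimes m_{(0)} \otimes ((n_1)_{(1)}(n_2)_{(1)} \cdot m_{(1)})$, which is the single nontrivial point and is exactly associativity of the $O_q(G)$-action on $C$, inherited from associativity of multiplication in $O_q(G)$: $(hh') \cdot c = h \cdot (h' \cdot c)$. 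The triangle identity similarly reduces to $1_{O_q(G)} \cdot c = c$. The pentagon and triangle axioms for the module category then follow from the corresponding axioms for vector spaces, since all the structural isomorphisms are the vector space ones. The main obstacle in the whole argument is precisely the compatibility of $\cdot$ with $\bar\Delta$ in the first paragraph: it is exactly the \emph{module coalgebra} condition on $C$ that makes the tensor product coaction well-defined, and it is what distinguishes this construction from the monoidal case (where one needs the full bialgebra structure).
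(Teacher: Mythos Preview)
Your proof is correct and complete. The paper, however, does not actually supply a proof of this theorem: it is stated with a citation to \cite[Proposition 6.12]{christodoulou16} and no argument is given in the text. So there is no proof in the paper to compare against.

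That said, your approach is exactly the expected one and dovetails with the paper's presentation: you take the template from Proposition~\ref{catcomodules} (which the paper \emph{does} prove) and replace the multiplication $\mu:O_q(G)\otimes O_q(G)\to O_q(G)$ by the induced left action $\cdot:O_q(G)\otimes C\to C$. The only substantive step is the module--coalgebra compatibility $\bar\Delta(h\cdot c)=\sum(h_{(1)}\cdot c_{(1)})\otimes(h_{(2)}\cdot c_{(2)})$, which you correctly derive from the facts that $\Delta$ is an algebra map on $O_q(G)$ and that $\pi$ is a coalgebra surjection (since $I$ is a coideal). Everything downstream---coassociativity of the mixed coaction, the associator reducing to associativity of the $O_q(G)$-action, and the coherence axioms reducing to those of vector spaces---is handled cleanly. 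This is precisely the argument one would expect, and it is in the spirit of the paper's own proof of the monoidal case.
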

\begin{prop}\cite[Corollary 4.15]{christodoulou16}
If $A$ is a subgroup of $U_q(\mathfrak{g})$, then $\mathcal{M}^{U_q(\mathfrak{g})}_A$ is a module category over $\mathcal{M}^{U_q(\mathfrak{g})}$.
\end{prop}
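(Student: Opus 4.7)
The plan is to equip $\mathcal{M}^{U_q(\mathfrak{g})}_A$ with the structure of a left module category over the monoidal category $\mathcal{M}^{U_q(\mathfrak{g})}$ (monoidal by Proposition \ref{catcomodules}) by transporting the tensor-product structure used there. Write $H=U_q(\mathfrak{g})$, and note that since $A$ is a right coideal subalgebra, $\Delta|_A\colon A\to A\otimes H$ is well-defined, so $A$ itself is a right $H$-comodule. Given $N\in\mathcal{M}^H$ and $M\in\mathcal{M}^H_A$, define $N\otimes M$ to be the underlying vector-space tensor product, with the diagonal right $H$-coaction $n\otimes m\mapsto \sum n_{(0)}\otimes m_{(0)}\otimes n_{(1)}m_{(1)}$ from Proposition \ref{catcomodules}, and with right $A$-action acting only on the second factor, $(n\otimes m)\cdot a = n\otimes(m\cdot a)$.

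The key step, which I expect to be the main obstacle, is verifying that $N\otimes M$ actually lies in $\mathcal{M}^H_A$, that is, that the action $\mu_{N\otimes M}$ is a morphism of right $H$-comodules. Writing $\Delta(a)=\sum a_{(1)}\otimes a_{(2)}$ with $a_{(1)}\in A$ (using the right coideal property), one computes on one hand
\begin{equation}
\Delta_{N\otimes M}\bigl(n\otimes(m\cdot a)\bigr)=\sum n_{(0)}\otimes m_{(0)}a_{(1)}\otimes n_{(1)}m_{(1)}a_{(2)},
\end{equation}
using the compatibility of $\mu_M$ with $\Delta_M$ (which is exactly the hypothesis $M\in\mathcal{M}^H_A$). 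On the other hand, coacting first on $(N\otimes M)\otimes A$ and then acting gives
\begin{equation}
\sum n_{(0)}\otimes(m_{(0)}\cdot a_{(1)})\otimes n_{(1)}m_{(1)}a_{(2)},
\end{equation}
and the two agree. This is where one must use both that $A$ is a right coideal (so $a_{(1)}\in A$, making the action $m_{(0)}\cdot a_{(1)}$ defined) and the relative-Hopf-module compatibility of $M$; nothing else would suffice.

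The remaining data are routine. On morphisms, $(f,g)\mapsto f\otimes g$ is clearly a morphism of right $A$-modules and of right $H$-comodules, so it lies in $\mathcal{M}^H_A$, giving a bifunctor $\otimes\colon\mathcal{M}^H\times\mathcal{M}^H_A\to\mathcal{M}^H_A$. The associator $\mu_{X,Y,M}\colon(X\otimes Y)\otimes M\to X\otimes(Y\otimes M)$ and the unit isomorphism $l_M\colon k\otimes M\to M$ are taken to be the underlying vector-space natural isomorphisms; one checks that each is both $A$-linear and $H$-colinear, the colinearity being identical to the verification carried out in Proposition \ref{catcomodules}. Finally, the pentagon and triangle axioms reduce to the corresponding identities for the tensor product of $k$-vector spaces, hence are immediate. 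Combining these with the compatibility verified above yields the module category structure.
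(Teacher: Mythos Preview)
The paper does not supply its own proof of this proposition; it is stated with a citation to \cite[Corollary 4.15]{christodoulou16} and left at that. Your argument is correct and is exactly the standard one: let $\mathcal{M}^H$ act on $\mathcal{M}^H_A$ via the tensor product of comodules with $A$ acting on the right factor only, and verify that the right coideal condition on $A$ together with the relative Hopf module compatibility for $M$ make the action map $H$-colinear. One tiny notational slip: in your first displayed equation you write $m_{(0)}a_{(1)}$ where you mean $m_{(0)}\cdot a_{(1)}$, as you do in the second.
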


\section{The Category of Crystals}\label{thecategoryofcrystals}
We can also use the tools of category theory to study the combinatorial structure of certain representations of quantum groups.

\subsection{Crystals}

Suppose that the Lie algebra $\mathfrak{g}$ has a set of simple roots $\Pi$ and weight lattice $\Lambda$. The \textit{simple coroots} are given by $\alpha^\vee=\frac{2}{(\alpha,\alpha)}\alpha$ for $\alpha\in\Pi$.

\begin{defn}\cite[Section 7.2]{kashiwara95}
A \textbfit{$U_q(\mathfrak{g})$-crystal} consists of a finite set $B$ equipped with maps 
\begin{align}
\begin{aligned}
\text{wt} &: B\rightarrow{\Lambda}\\
\varepsilon_\alpha,\varphi_\alpha &:B\rightarrow{\mathbb{Z}\sqcup \{-\infty\}}\\
\tilde{E}_\alpha,\tilde{F}_\alpha &:B\rightarrow{B\sqcup \{0\}}
\end{aligned}
\end{align}
for $\alpha\in\Pi$, such that, for all $b,b'\in B$, 
\begin{align}
&\varphi_\alpha(b)=\varepsilon_\alpha(b)+(\alpha^\vee,\text{wt}(b)),\\
&b'=\tilde{F}_\alpha b\text{ if and only if }b=\tilde{E}_\alpha b',\\
&\text{If }\varphi_\alpha(b)=-\infty,\text{ then }\tilde{E}_\alpha b=\tilde{F}_\alpha b=0 
\end{align}
If $\tilde{E}_\alpha b\neq 0$ then 
\begin{align}
\varepsilon_\alpha(\tilde{E}_\alpha b)&=\varepsilon_\alpha(b)-1,\quad & \varphi_\alpha(\tilde{E}_\alpha b) &=\varphi_\alpha(b)+1,\quad & \text{wt}(\tilde{E}_\alpha b) &=\text{wt}(b)+\alpha,\\
\intertext{If $\tilde{F}_\alpha b\neq 0$ then}
\varepsilon_\alpha(\tilde{F}_\alpha b)&=\varepsilon_\alpha(b)+1,\quad & \varphi_\alpha(\tilde{F}_\alpha b) &=\varphi_\alpha(b)-1,\quad & \text{wt}(\tilde{F}_\alpha b) &=\text{wt}(b)-\alpha,
\end{align}

\end{defn}

\begin{remark}
When there is no ambiguity we simply refer to $U_q(\mathfrak{g})$-crystals as crystals.
\end{remark} 
We visualise crystals using crystal graphs.

\begin{defn}\cite[Section 4.2]{kashiwara95}
A \textbfit{crystal graph} has non-zero $b\in B$ as its vertices, with arrows labelled by $\alpha\in\Pi$ such that
\begin{equation}b\xrightarrow{\alpha}{b'}\text{ if and only if } \tilde{F}_\alpha b=b'\end{equation}
 
\end{defn}

\begin{exmp}\label{tlambda}
For $\lambda\in\Lambda$, let $T_\lambda$ be the crystal with finite set $T_\lambda=\{t_\lambda\}$ along with maps
\begin{equation}\tilde{E}_\alpha(t_\lambda)=\tilde{F}_\alpha(t_\lambda)=0,\quad {\normalfont{\text{wt}}}(t_\lambda)=\lambda,\quad \varepsilon_\alpha(t_\lambda)=\varphi_\alpha(t_\lambda)=-\infty\end{equation}
The crystal graph of $T_\lambda$ consists of a single point.
\end{exmp}

\begin{defn}\label{crysmorphism}\cite[Section 7.2]{kashiwara95}
For crystals $B_1, B_2$, a \textbfit{morphism of crystals} $\psi:B_1\rightarrow{B_2}$ is a map $\psi:B_1\sqcup \{0\}\rightarrow{B_2\sqcup \{0\}}$ such that, for all $b\in B_1$ and $\alpha\in\Pi$, we have
\begin{equation}
\psi(0)=0,    
\end{equation}
For $\psi(b)\neq 0$,
\begin{equation}
\text{wt}(\psi(b))=\text{wt}(b),\qquad \varepsilon_\alpha(\psi(b))=\varepsilon_\alpha(b),\qquad  \varphi_\alpha(\psi(b))=\varphi_\alpha(b),
\end{equation}
And,
\begin{align}
\begin{aligned}
\text{If }\psi(\tilde{E}_\alpha b)&\neq 0,\,\psi(b)\neq 0\text{ then }\psi(\tilde{E}_\alpha b)=\tilde{E}_\alpha\psi (b),\\
\text{If }\psi(\tilde{F}_\alpha b)&\neq 0,\,\psi(b)\neq 0\text{ then }\psi(\tilde{F}_\alpha b)=\tilde{F}_\alpha\psi (b),\\
\end{aligned}
\end{align}

\end{defn}

\begin{defn}
We define the \textbfit{category of crystals} \textbf{Crys} to be the category with crystals as its objects and morphisms as defined above.
\end{defn}
We can also define a tensor product of crystals.

\begin{defn}\cite[Section 7.3]{kashiwara95}\label{monoidalcrystalcat}
Let $B_1,B_2$ be crystals. Their \textbfit{tensor product $B_1\otimes B_2$} is the set $\{b_1\otimes b_2\mid b_1\in B_1, b_2\in B_2\}$ with crystal structure defined by
\begin{align}
    \begin{aligned}
    \text{wt}(b_1\otimes b_2)&=\text{wt}(b_1)+\text{wt}(b_2),\\
    \varepsilon_\alpha(b_1\otimes b_2)&=\text{max}(\varepsilon_\alpha(b_1), \varepsilon_\alpha(b_2)-(\alpha^\vee,\text{wt}(b_1))),\\
    \varphi_\alpha(b_1\otimes b_2)&=\text{max}(\varphi_\alpha(b_2), \varphi_\alpha(b_1)+(\alpha^\vee,\text{wt}(b_2))),\\
    \tilde{E}_\alpha(b_1\otimes b_2)&=\begin{cases}
        \tilde{E}_\alpha b_1\otimes b_2, & \text{if } \varphi_\alpha(b_1)\geq \varepsilon_\alpha(b_2), \\b_1\otimes\tilde{E}_\alpha b_2, & \text{if } \varphi_\alpha(b_1)< \varepsilon_\alpha(b_2),
        \end{cases},\\
    \tilde{F}_\alpha(b_1\otimes b_2)&=\begin{cases}
        \tilde{F}_\alpha b_1\otimes b_2, & \text{if } \varphi_\alpha(b_1)> \varepsilon_\alpha(b_2), \\
        b_1\otimes\tilde{F}_\alpha b_2, & \text{if } \varphi_\alpha (b_1)\leq \varepsilon_\alpha(b_2).
        \end{cases}  
    \end{aligned}
\end{align}
\end{defn}

\begin{prop}\label{crysmonoidal}
The category {\normalfont{\textbf{Crys}}} has the structure of a monoidal category.\label{crys}
\end{prop}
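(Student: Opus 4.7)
The plan is to equip \textbf{Crys} with the tensor product from Definition \ref{monoidalcrystalcat} as the bifunctor, take the unit object to be $T_0$ from Example \ref{tlambda}, and use the set-theoretic ``reparenthesising'' maps $(b_1\otimes b_2)\otimes b_3 \mapsto b_1\otimes (b_2\otimes b_3)$ and $t_0\otimes b\mapsto b$, $b\otimes t_0\mapsto b$ as the associator $\alpha$ and the unitors $l,r$. Once these maps are shown to be crystal isomorphisms, the pentagon and triangle identities come for free, since they are equalities of the underlying bijections of sets.

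First I would verify that $\otimes$ is a bifunctor. On objects, one checks that the formulas in Definition \ref{monoidalcrystalcat} define a crystal structure on $B_1\otimes B_2$: additivity of $\mathrm{wt}$ handles the relation $\varphi_\alpha = \varepsilon_\alpha + (\alpha^\vee,\mathrm{wt})$ after a brief computation, the condition that $\tilde{F}_\alpha$ and $\tilde{E}_\alpha$ are mutual inverses reduces to the two cases separated by comparing $\varphi_\alpha(b_1)$ with $\varepsilon_\alpha(b_2)$, and the shifts of $\varepsilon_\alpha,\varphi_\alpha,\mathrm{wt}$ under $\tilde{E}_\alpha,\tilde{F}_\alpha$ are inherited from the factor that actually gets acted upon. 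On morphisms, for $\psi_i\colon B_i\to B_i'$ I would define $\psi_1\otimes\psi_2$ on $b_1\otimes b_2$ to be $\psi_1(b_1)\otimes\psi_2(b_2)$ when both images are non-zero and $0$ otherwise; preservation of $\mathrm{wt},\varepsilon_\alpha,\varphi_\alpha$ is immediate from the definitions and preservation of $\tilde{E}_\alpha,\tilde{F}_\alpha$ is a direct case check using the comparison of $\varphi_\alpha(\psi_1(b_1))$ with $\varepsilon_\alpha(\psi_2(b_2))$, which equals the comparison in $B_1\otimes B_2$ since $\psi_i$ preserves these functions.

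Next I would verify the unit axioms using $T_0$. Since $\mathrm{wt}(t_0)=0$ and $\varepsilon_\alpha(t_0)=\varphi_\alpha(t_0)=-\infty$, the formulas in Definition \ref{monoidalcrystalcat} collapse: for $T_0\otimes B$, the maximum defining $\varepsilon_\alpha(t_0\otimes b)$ is attained at $\varepsilon_\alpha(b)$, the comparison $\varphi_\alpha(t_0)<\varepsilon_\alpha(b)$ always holds (conventionally $-\infty < n$), and one obtains $\tilde{E}_\alpha(t_0\otimes b)=t_0\otimes\tilde{E}_\alpha b$ together with analogous formulas for $\tilde{F}_\alpha$ and for $B\otimes T_0$. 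The bijection $t_0\otimes b\leftrightarrow b$ is therefore a crystal isomorphism, giving natural isomorphisms $l_B$ and $r_B$.

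The substantive step, and the one I expect to be the main obstacle, is verifying that the reparenthesising bijection $\alpha_{B_1,B_2,B_3}\colon (B_1\otimes B_2)\otimes B_3\to B_1\otimes (B_2\otimes B_3)$ is a crystal morphism. Preservation of $\mathrm{wt}$ is immediate by additivity. For $\varepsilon_\alpha$ and $\varphi_\alpha$ one must show that iterated maxima of the form
\begin{equation}
\max\bigl(\max(\varepsilon_\alpha(b_1),\varepsilon_\alpha(b_2)-(\alpha^\vee,\mathrm{wt}(b_1))),\;\varepsilon_\alpha(b_3)-(\alpha^\vee,\mathrm{wt}(b_1)+\mathrm{wt}(b_2))\bigr)
\end{equation}
equal the corresponding expression bracketed the other way; this is a routine unfolding using associativity of $\max$ and bilinearity of $(\alpha^\vee,-)$, with an analogous computation for $\varphi_\alpha$. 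The real work is the action of $\tilde{E}_\alpha$ and $\tilde{F}_\alpha$: one performs a case analysis based on the three-way comparison of $\varphi_\alpha(b_1)$, $\varepsilon_\alpha(b_2)$, $\varphi_\alpha(b_2)+(\alpha^\vee,\mathrm{wt}(b_2))-\varepsilon_\alpha(b_2)$ and $\varepsilon_\alpha(b_3)$, and shows that in each regime the Kashiwara operator acts on the same tensor factor on both sides of $\alpha$. Once this is checked, naturality of $\alpha,l,r$ follows from their being identity maps on underlying sets, and the pentagon and triangle diagrams commute because both legs equal the evident identification on elements.
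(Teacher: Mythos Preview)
Your approach is essentially the same as the paper's: take the tensor product of Definition \ref{monoidalcrystalcat} as the bifunctor, a one-point crystal as unit, and verify the coherence axioms directly. The paper's proof is a two-line sketch, so your detailed case analysis of associativity and the unit laws goes well beyond what the paper actually writes down.

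One small point of divergence worth noting: you take the unit to be $T_0$ from Example \ref{tlambda} (with $\varepsilon_\alpha=\varphi_\alpha=-\infty$), whereas the paper names the unit $B(0)=\{b_0\}$ with $\tilde{E}_\alpha b_0=\tilde{F}_\alpha b_0=0$ and $\mathrm{wt}(b_0)=0$, without specifying $\varepsilon_\alpha,\varphi_\alpha$. Your choice is the safer one for the category \textbf{Crys} as defined: if $B(0)$ were taken with $\varepsilon_\alpha=\varphi_\alpha=0$ (the crystal of the trivial module), then for an arbitrary abstract crystal $B$ the identity $\varepsilon_\alpha(b\otimes b_0)=\max(\varepsilon_\alpha(b),-(\alpha^\vee,\mathrm{wt}(b)))$ need not equal $\varepsilon_\alpha(b)$, so $B(0)$ would only be a unit on the subcategory of seminormal crystals. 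With $T_0$ the collapse to $\varepsilon_\alpha(b)$, $\varphi_\alpha(b)$ is automatic, exactly as you argue.
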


\begin{proof}
The tensor product of crystals is a bifunctor on\\ \normalfont{\textbf{Crys}}$\otimes$\normalfont{\textbf{Crys}}. The unit object is the crystal $B(0)=\{b_0\}$ where $\tilde{E}_\alpha b_0=0=\tilde{F}_\alpha b_0$ and $\text{wt}(b_0)=0$. The associativity and identity conditions can easily be verified.
\end{proof}

\subsection{Crystal Bases}

The main source of examples of objects in \textbf{Crys} are \textbfit{crystal bases} of integrable $U_q(\mathfrak{g})$-modules $V$ in the category $\mathcal{O}_{int}^q(\mathfrak{g})$.

\begin{defn}\cite[Definition 2.2]{kashiwara95}
An \textbfit{integrable (left) $U_q(\mathfrak{g})$-module} $V$ is a type 1 module such that, for any $\alpha\in\Pi$, $V$ is a union of finite-dimensional $U_q^\alpha(\mathfrak{g})$-submodules (Definition \ref{uqasl2}). We let $\mathcal{O}_{int}^q(\mathfrak{g})$ be the category consisting of integrable $U_q(\mathfrak{g})$-modules with ${\normalfont{\text{dim }}}U_q^+(\mathfrak{g})u <\infty$ for all $u\in V$.
\end{defn}

The definition of a crystal base is too involved to state here, for more information see Kashiwara's survey text \cite{kashiwara95}. Kashiwara's crystal bases can be understood as bases at $q=0$ and can be extended to global bases covering all values of $q$. Independently, Lusztig \cite{lusztig90ii} introduced canonical bases which coincide with these global bases.

We note that a crystal base consists of a pair $(L,B)$ with $L$ a `crystal lattice' and $B$ a basis of $L/qL$. $B$ is acted on by maps $\tilde{E}_\alpha,\tilde{F}_\alpha$ which leave each $U_q^\alpha(\mathfrak{g})$-submodule invariant. Furthermore, each $b\in B$ has an associated weight $\text{wt}(b)$. Therefore a crystal base $(L,B)$ gives a crystal $B$ in \textbf{Crys}.

We state the following proposition regarding modules in $\mathcal{O}_{int}^q(\mathfrak{g})$.

\begin{prop}\label{irreduciblereps}\cite[Theorem 2.1]{kashiwara95}
Modules in $\mathcal{O}_{int}^q(\mathfrak{g})$ are semisimple, and all simple modules  are isomorphic to $V(\lambda)$, for some $\lambda\in\Lambda^+$, where $V(\lambda)$ is the $U_q(\mathfrak{g})$-module generated by a highest weight vector $u_\lambda$ with the following relations
\begin{equation}
E_\alpha u_\lambda=0=F_\alpha^{1+a_{\alpha\lambda}}u_\lambda, \quad K_\mu u_\lambda=q^{(\lambda,\mu)}u_\lambda    
\end{equation}
For all $\alpha\in\Pi,\quad \mu\in\mathbb{Z}\Phi$.
\end{prop}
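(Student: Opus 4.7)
The statement packages two separate claims---semisimplicity of $\mathcal{O}^q_{int}(\mathfrak{g})$ and a classification of its simple objects---so I would dispatch them in turn, handling classification first and then attacking semisimplicity, which is the main obstacle.

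For the classification, let $V$ be simple in $\mathcal{O}^q_{int}(\mathfrak{g})$. By type 1 (Definition \ref{type1}), $V = \bigoplus_{\mu \in \Lambda} V(\mu)$, and the integrability condition $\dim U_q^+(\mathfrak{g}) v < \infty$ forces any $U_q^+(\mathfrak{g})$-orbit to have only finitely many weights; hence a maximal weight $\lambda$ occurs in $V$, and any nonzero $v_\lambda \in V(\lambda)$ is automatically annihilated by every $E_\alpha$ since $\lambda + \alpha$ is then not a weight. Restricting to the subalgebra $U_q^\alpha(\mathfrak{g}) \simeq U_{q_\alpha}(\mathfrak{sl}_2)$ of Definition \ref{uqasl2}, integrability makes the cyclic $U_q^\alpha(\mathfrak{g})$-module on $v_\lambda$ finite-dimensional and type 1, so Theorem \ref{generate} together with Proposition \ref{irreducibles} forces $(\alpha^\vee, \lambda) \in \mathbb{Z}_{\geq 0}$ and the annihilation $F_\alpha^{1+(\alpha^\vee,\lambda)} v_\lambda = 0$. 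Thus $\lambda \in \Lambda^+$ and $v_\lambda$ satisfies the stated relations. Uniqueness of $V$ given $\lambda$ follows by the usual Verma-module argument: the quotient $M(\lambda)$ of $U_q(\mathfrak{g})$ enforcing only the highest-weight and weight relations has, by the triangular decomposition of Proposition \ref{triangulardecomp}, a unique maximal proper submodule and hence a unique simple quotient, and both $V$ and the prescribed $V(\lambda)$ must be that quotient.

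For semisimplicity, the plan is to combine a quantum Casimir argument with an equal-weight splitting. One constructs a central element $C_q \in U_q(\mathfrak{g})$ (a $q$-analogue of the classical Casimir) acting on each $V(\lambda)$ by a scalar $c_q(\lambda)$, and verifies that $c_q$ separates distinct dominant integral weights after the usual $\rho$-shift; this splits any short exact sequence between non-isomorphic simples via the generalised eigenspace decomposition for $C_q$. For an equal-weight extension $0 \to V(\lambda) \to V \to V(\lambda) \to 0$, the $\lambda$-weight space of $V$ is two-dimensional and, since $\lambda + \alpha$ is not a weight of $V$, every nonzero vector in it is automatically a highest weight vector; lifting the quotient's highest weight vector to such a vector $v'_\lambda$ produces a submodule $U_q(\mathfrak{g}) v'_\lambda$ which by the classification is isomorphic to $V(\lambda)$ and maps isomorphically onto the quotient, giving the splitting. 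Iterating across isotypic components exhibits any object of $\mathcal{O}^q_{int}(\mathfrak{g})$ as a direct sum of simples. The technically delicate part is writing $C_q$ down in a PBW basis and verifying the eigenvalue separation; I would defer that calculation to the treatment in \cite{kashiwara95}.
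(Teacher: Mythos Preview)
The paper does not give its own proof of this proposition; it is simply stated with a citation to \cite[Theorem 2.1]{kashiwara95}, so there is nothing in-paper to compare your argument against. Your sketch follows the standard textbook route (essentially that of Lusztig, reproduced in \cite{jantzen96} and \cite{hongkang02}): locate a highest weight vector via the local finiteness of $U_q^+(\mathfrak{g})$-orbits, use restriction to each $U_q^\alpha(\mathfrak{g})\simeq U_{q_\alpha}(\mathfrak{sl}_2)$ to force dominance and the relation $F_\alpha^{1+(\alpha^\vee,\lambda)}v_\lambda=0$, and then split extensions by a quantum Casimir argument. The outline is sound. Two small points worth tightening: in the equal-highest-weight case you should make explicit, via the triangular decomposition, that $W=U_q(\mathfrak{g})v'_\lambda=U_q^-(\mathfrak{g})v'_\lambda$ has $W_\lambda=k v'_\lambda$ one-dimensional, since that is precisely what excludes $W=V$ and forces $W\cap V(\lambda)=0$; and the quantum Casimir is more naturally realised as an operator on modules in $\mathcal{O}^q_{int}(\mathfrak{g})$ than as a literal central element, though since you already defer its construction and eigenvalue separation to the reference this is consistent with the level of your sketch.
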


These simple modules have crystal bases.

\begin{thm}\cite[Theorem 4.2]{kashiwara95}\cite[Section 9.5]{jantzen96}\label{crystalv(lambda)}
$V(\lambda)$ has crystal lattice $L(\lambda)$ equal to the $\textbf{A}$-submodule of $V(\lambda)$ spanned by vectors of the form \begin{equation}
\tilde{F}_{\alpha_1}\tilde{F}_{\alpha_2}\dots \tilde{F}_{\alpha_k}u_\lambda\text{  for } \alpha_i\in\Pi,k\geq 0,   
\end{equation}where $\textbf{A}=\{g/h\mid g,h\in k[q],\, h(0)\neq0\}$. The associated crystal $B(\lambda)$ is the set 
\begin{equation}B(\lambda)=\{\tilde{F}_{\alpha_1}\tilde{F}_{\alpha_2}\dots \tilde{F}_{\alpha_k}u_\lambda{\normalfont{\text{ mod }}}qL(\lambda) \mid \alpha_i\in \Pi, k\geq 0\}\backslash \{0\}\end{equation} 
\end{thm}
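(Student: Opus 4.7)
The plan is to follow Kashiwara's \emph{grand loop} argument, which simultaneously establishes the two claims (the lattice description of $L(\lambda)$ and the basis property of $B(\lambda)$) by induction on the weight, using the $\mathfrak{sl}_2$-case as the base. Throughout, I would use Proposition \ref{irreduciblereps} to present $V(\lambda)$ as the highest weight module generated by $u_\lambda$, together with the definition of Kashiwara's operators $\tilde{E}_\alpha,\tilde{F}_\alpha$: given any $U_q^\alpha(\mathfrak{g})$-submodule, every weight vector decomposes uniquely as $v=\sum_{n\geq 0} F_\alpha^{(n)}\, u_n$ with $u_n$ a highest weight vector for $U_q^\alpha(\mathfrak{g})$, and one sets $\tilde{F}_\alpha v=\sum_{n\geq 0} F_\alpha^{(n+1)} u_n$ and $\tilde{E}_\alpha v=\sum_{n\geq 1} F_\alpha^{(n-1)} u_n$.

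First I would handle the rank-one case $\mathfrak{g}=\mathfrak{sl}_2$. Here $V(n)$ has the explicit basis $\{F^{(k)}u\mid 0\leq k\leq n\}$ from Proposition \ref{irreducibles}, and $\tilde{F}^k u_n = F^{(k)} u_n$. The $\textbf{A}$-span $L(n)$ of these vectors is manifestly $\tilde{E},\tilde{F}$-stable, and the images of the $F^{(k)} u_n$ in $L(n)/qL(n)$ form a $k$-basis, giving the base case. Next, for general $\mathfrak{g}$, I would define $L(\lambda)$ as in the statement and $\mathcal{B}(\lambda)=\{\tilde{F}_{\alpha_1}\cdots\tilde{F}_{\alpha_k}u_\lambda\bmod qL(\lambda)\}\setminus\{0\}$, and proceed by induction on the depth below $\lambda$, i.e.\ on the number $k$ of lowering operators applied. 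The inductive hypothesis $(P_k)$ bundles together: (i) stability of $L(\lambda)$ under every $\tilde{E}_\alpha,\tilde{F}_\alpha$ up to depth $k$, (ii) compatibility $\tilde{F}_\alpha\tilde{E}_\alpha b=b$ whenever $\tilde{E}_\alpha b\neq 0$ on the relevant vectors, and (iii) linear independence of the images of the depth-$k$ vectors in $L(\lambda)/qL(\lambda)$.

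The inductive step is driven by restriction to each subalgebra $U_q^\alpha(\mathfrak{g})\simeq U_{q_\alpha}(\mathfrak{sl}_2)$: any weight space is a direct sum of finite-dimensional $U_q^\alpha(\mathfrak{g})$-modules by integrability, so the $\mathfrak{sl}_2$-case can be applied stringwise along each $\alpha$-string. I would verify that applying $\tilde{F}_\beta$ to a vector obtained by a string of $\tilde{F}_{\alpha_i}$'s produces, modulo $qL(\lambda)$, either zero or another distinct element of the candidate basis, using the quantum Serre relations of Definition \ref{quantenvalg} to control how $\tilde{F}_\beta$ and $\tilde{F}_\alpha$ commute up to higher-order terms in $q$. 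The basis claim then follows by counting weight multiplicities: the spanning set has the correct cardinality in each weight space because the $q\to 1$ specialisation recovers the classical Weyl character formula for $V(\lambda)$.

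The main obstacle will be the simultaneous induction itself, specifically controlling mixed strings $\tilde{F}_{\alpha_{i_1}}\cdots\tilde{F}_{\alpha_{i_k}}u_\lambda$ when the $\alpha_{i_j}$ are distinct. Showing that distinct words either produce distinct classes mod $qL(\lambda)$ or produce zero, without circularity in the induction, is the delicate point of Kashiwara's argument and requires carefully tracking the interaction of $\tilde{E}_\alpha$ and $\tilde{F}_\beta$ via the $\mathfrak{sl}_2$-reduction. Once $(P_k)$ is proved for all $k$, taking the union yields that $\mathcal{B}(\lambda)$ is a $k$-basis of $L(\lambda)/qL(\lambda)$, and $(L(\lambda),\mathcal{B}(\lambda))$ satisfies the axioms of a crystal base, giving the crystal $B(\lambda)$ as described.
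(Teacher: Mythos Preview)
The paper does not give its own proof of this theorem; it is stated with citations to \cite{kashiwara95} and \cite{jantzen96} and left unproved. Your proposal follows Kashiwara's grand loop argument, which is indeed the proof in the cited references, so in that sense you are aligned with what the paper relies on.

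That said, two points in your sketch misrepresent how the grand loop actually works and would become genuine obstacles if you tried to execute the argument as written. First, the inductive step does not proceed by using the quantum Serre relations to commute $\tilde{F}_\alpha$ and $\tilde{F}_\beta$ ``up to higher-order terms in $q$''. The Kashiwara operators are not polynomial in the $F_\alpha$'s in any way that makes such a commutation tractable. The real engine is the tensor product: one runs a simultaneous induction over \emph{all} $V(\lambda)$ together with $U_q^-(\mathfrak{g})$, and the key moves are the embeddings $V(\lambda+\mu)\hookrightarrow V(\lambda)\otimes V(\mu)$ and the projections $\pi_\lambda:U_q^-(\mathfrak{g})\to V(\lambda)$, combined with the compatibility of $\tilde{E}_\alpha,\tilde{F}_\alpha$ with the tensor product rule. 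Stringwise $\mathfrak{sl}_2$-reduction alone, without the tensor product machinery, does not suffice to control mixed strings. Second, linear independence of $B(\lambda)$ is not established by a character-counting argument invoking the Weyl formula at the end; it is one of the bundled statements proved \emph{inside} the induction, and the Weyl character formula plays no role. If you relied on counting at the end you would be assuming what needs to be proved, since you do not yet know that distinct words give distinct nonzero classes.
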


Further, we can define crystal bases for all modules in $\mathcal{O}_{int}^q(\mathfrak{g})$.

\begin{thm}\cite[Theorem 4.3]{kashiwara95}\label{reducibility}
Suppose $V$ is an $U_q(\mathfrak{g})$-module in $\mathcal{O}_{int}^q(\mathfrak{g})$ with $V\simeq\oplus_{\lambda\in\Lambda^+}V(\lambda)$. If $(L,B)$ is a crystal basis of $V$ then there exists an isomorphism \begin{equation}
(L,B)\simeq \bigoplus_{\lambda\in\Lambda^+} (L(\lambda),B(\lambda))    
\end{equation}
\end{thm}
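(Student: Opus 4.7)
The plan is to combine the semisimplicity of the category $\mathcal{O}_{int}^q(\mathfrak{g})$ from Proposition \ref{irreduciblereps} with the explicit crystal basis of each simple summand supplied by Theorem \ref{crystalv(lambda)}, and then reduce the statement to a uniqueness property of crystal bases.

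First I would use Proposition \ref{irreduciblereps} to write $V \simeq \bigoplus_{\lambda \in \Lambda^+} V(\lambda)^{\oplus m_\lambda}$, where the hypothesis $\dim U_q^+(\mathfrak{g})u < \infty$ ensures that each weight space of $V$ is finite dimensional so the multiplicities $m_\lambda$ are well defined. Applying Theorem \ref{crystalv(lambda)} to each simple factor gives its canonical crystal basis $(L(\lambda), B(\lambda))$. Direct-summing yields a candidate
\begin{equation}
(L', B') \;=\; \bigoplus_{\lambda \in \Lambda^+} \bigl(L(\lambda), B(\lambda)\bigr)^{\oplus m_\lambda}
\end{equation}
which I would verify is indeed a crystal basis of $V$: the lattice $L'$ is an $\mathbf{A}$-submodule of $V$ stable under every Kashiwara operator because each $L(\lambda)$ is, and the image of $B'$ in $L'/qL'$ is a basis respecting the weight grading, with the induced action of $\tilde{E}_\alpha$ and $\tilde{F}_\alpha$ at $q=0$ matching the action on each $B(\lambda)$ componentwise.

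The substantive content of the theorem is then producing an isomorphism from the given crystal basis $(L, B)$ to the model basis $(L', B')$. The natural strategy is to identify the \emph{highest weight elements} of $B$, namely those $b \in B$ annihilated by every $\tilde{E}_\alpha$, and to show that each such $b$ lifts to an honest highest weight vector $v_b \in V$ of weight $\mathrm{wt}(b)$ lying in $L$ and representing $b$ modulo $qL$. The $U_q(\mathfrak{g})$-submodule generated by $v_b$ is then forced to be isomorphic to $V(\mathrm{wt}(b))$, and one argues that the sub-crystal of $B$ obtained by repeatedly applying the operators $\tilde{F}_\alpha$ to $b$ coincides with the image of $B(\mathrm{wt}(b))$ inside $L/qL$. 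Assembling these summands across all highest weight elements gives the required direct sum decomposition of $(L, B)$.

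The main obstacle is the lifting step, which is essentially Kashiwara's uniqueness theorem for crystal bases: one must show that a $\tilde{E}_\alpha$-annihilated class in $L/qL$ genuinely arises from a highest weight vector of $V$, and that the intersection of $L$ with the submodule it generates is precisely $L(\mathrm{wt}(b))$. This requires controlling the interplay between the Kashiwara operators and the $U_q(\mathfrak{g})$-action at $q=0$, and is typically proved either by downward induction on weight using the generation property of $L(\lambda)$ by the $\tilde{F}_\alpha$ from $u_\lambda$ in Theorem \ref{crystalv(lambda)}, or via a polarisation on $L/qL$ making the direct sum decomposition orthogonal. Once the lifting is established, the isomorphism $(L, B) \simeq (L', B')$ is constructed one summand at a time, and the result follows.
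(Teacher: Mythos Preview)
The paper does not supply its own proof of this theorem: it is stated with a bare citation to \cite[Theorem 4.3]{kashiwara95} and no argument is given. So there is nothing in the paper to compare your proposal against.

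That said, your outline is the standard one and matches Kashiwara's actual strategy. A word of caution, though: what you call the ``main obstacle'' --- showing that every $\tilde{E}_\alpha$-annihilated element of $B$ lifts to a genuine highest weight vector in $L$, and that $L$ restricted to the submodule it generates is exactly $L(\mathrm{wt}(b))$ --- is not a technical detail to be tidied up but the entire content of the theorem. Your gesture towards ``downward induction on weight'' or a polarisation argument is on the right track (Kashiwara uses both ingredients, with the polarisation playing the decisive role in showing that highest weight vectors split off compatibly with $L$), but as written you have described \emph{what} must be proved rather than \emph{how}. In particular, phrasing the lifting step as ``essentially Kashiwara's uniqueness theorem for crystal bases'' risks circularity, since that uniqueness statement is precisely Theorem~\ref{reducibility} itself.
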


\subsection{Crystal Bases for $U_q(\mathfrak{sl}_2)$-modules in the Category $\mathcal{O}_{int}^q(\mathfrak{sl}_2)$}\label{sl2crys}

We apply the definitions and theorems stated in the previous section to $U_q(\mathfrak{sl}_2)$-modules in the category $\mathcal{O}_{int}^q(\mathfrak{sl}_2)$. From Theorems \ref{irreducibles} and \ref{generate}, every simple $U_q(\mathfrak{sl}_2)$-module is isomorphic to $V(n)$ for some $n\in\mathbb{Z}_{\geq 0}$. Therefore, we just need to consider crystal bases for these $V(n)$ by Theorem \ref{reducibility}. These modules have bases $\{u,Fu,\dots F^{(n)}u\}$. Define operators $\tilde{E},\tilde{F}$ on the basis by
\begin{equation}
    \tilde{E}(F^{(k)}u)=F^{(k-1)}u,\quad \tilde{F}(F^{(k)})u=F^{(k+1)}u
\end{equation}
Thus, the crystal lattice $L(n)$ and crystal $B(n)$ are
\begin{equation}
    L(n)=\bigoplus_{k=0}^n\textbf{A}F^{(k)}u,\quad B(n)=\{\overline{u},\overline{Fu},\dots,\overline{F^{(n)}u}\}
\end{equation}
where $\overline{F^{(k)}u}=F^{(k)}u \text{ mod }qL(n)$. The maps giving $B(n)$ its crystal structure are given by 
\begin{equation}
    {\normalfont{\text{wt}}}(\overline{F^{(k)}u})=n-2k,\quad \varepsilon(\overline{F^{(k)}u})=k,\quad \varphi(\overline{F^{(k)}u})=n-k
\end{equation}
And, the crystal graph of $B(n)$ is given by 
\begin{equation}
    \overline{u}\rightarrow{\overline{Fu}}\rightarrow{\dots}\rightarrow{\overline{F^{(n)}u}}
\end{equation}
Crystal graphs provide us with a way of visualising the tensor product of crystals, as the following example shows. Consider the tensor product $B(2)\otimes B(2)$, where $B(2)$ has basis $\{\overline{u},\overline{Fu},\overline{F^{(2)}u}\}$; the crystal graph of $B(2)\otimes B(2)$ is

\begin{center}
\begin{tikzpicture}[scale =2.5]
\node(a) at (0,0.6) {$\boldsymbol{\overline{u}}$};
\node(b) at (1,0.6) {$\boldsymbol{\overline{Fu}}$};
\node(c) at (2,0.6) {$\boldsymbol{\overline{F^{(2)}u}}$};
\node(d) at (-0.4,0.6) {$\boldsymbol{B(2)}$};
\draw[->] (a) -- (b);
\draw[->] (b)--(c);

\node (e) at (-0.8, 0) {$\boldsymbol{\overline{u}}$};
\node (f) at (-0.8, -0.5) {$\boldsymbol{\overline{Fu}}$};
\node (g) at (-0.8, -1) {$\boldsymbol{\overline{F^{(2)}u}}$};
\node (h) at (-0.8,0.3) {$\boldsymbol{B(2)}$};
\draw[->] (e) -- (f);
\draw[->] (f) -- (g);

\node (A) at (0,0) {$\overline{u}\otimes \overline{u}$};
\node (B) at (1,0) {$\overline{Fu}\otimes \overline{u}$};
\node (C) at (2,0) {$\overline{F^{(2)}u}\otimes \overline{u}$};
\node (D) at (0,-0.5) {$\overline{u}\otimes \overline{Fu}$};
\node (E) at (1,-0.5) {$\overline{Fu}\otimes \overline{Fu}$};
\node (F) at (2,-0.5) {$\overline{F^{(2)}u}\otimes \overline{Fu}$};
\node (G) at (0,-1) {$\overline{u}\otimes \overline{F^{(2)}u}$} ;
\node (H) at (1,-1) {$\overline{Fu}\otimes \overline{F^{(2)}u}$} ;
\node (I) at (2,-1) {$\overline{F^{(2)}u}\otimes \overline{F^{(2)}u}$} ;
\draw[->] (A) -- (B);
\draw[->] (B) -- (C);
\draw[->] (C) -- (F);
\draw[->] (F) -- (I);
\draw[->] (D) -- (E);
\draw[->] (E) -- (H);
\end{tikzpicture}
\end{center}

In general, the crystal graph of $B(m)\otimes B(n)$ is

\begin{center}
\begin{tikzpicture}[scale= 0.9]
\node (A) at (-1,1.5) {$\boldsymbol{B(m)}$};
\node (a) at (0,1.5) {$\boldsymbol{\circ}$};
\node (b) at (1,1.5) {$\boldsymbol{\circ}$};
\node (c) at (2,1.5) {$\boldsymbol{\circ}$};
\node (d) at (3,1.5) {$\boldsymbol{\circ}$};
\node (e) at (4,1.5) {$\boldsymbol{\circ}$};
\node (f) at (5,1.5) {$\boldsymbol{\circ}$};
\node (g) at (6,1.5) {$\boldsymbol{\circ}$};

\draw[->] (a) -- (b);
\draw[->] (b) -- (c);
\draw[->] (c) -- (d);
\draw[dashed] (d) -- (e);
\draw[->] (e) -- (f);
\draw[->] (f) -- (g);

\node (B) at (-1.5,0.7) {$\boldsymbol{B(n)}$};
\node (h) at (-1.5,0) {$\boldsymbol{\circ}$};
\node (i) at (-1.5,-1) {$\boldsymbol{\circ}$};
\node (j) at (-1.5,-2) {$\boldsymbol{\circ}$};
\node (k) at (-1.5,-3) {$\boldsymbol{\circ}$};
\node (l) at (-1.5,-4) {$\boldsymbol{\circ}$};

\draw[->] (h) -- (i);
\draw[dashed] (i) -- (j);
\draw[dashed] (j) -- (k);
\draw[dashed] (k) -- (l);

\node (1) at (0,0) {$\circ$};
\node (2) at (1,0) {$\circ$};
\node (3) at (2,0) {$\circ$};
\node (4) at (3,0) {$\circ$};
\node (5) at (4,0) {$\circ$};
\node (6) at (5,0) {$\circ$};
\node (7) at (6,0) {$\circ$};

\draw[->] (1) -- (2);
\draw[->] (2) -- (3);
\draw[->] (3) -- (4);
\draw[dashed] (4) -- (5);
\draw[->] (5) -- (6);
\draw[->] (6) -- (7);

\node (8) at (0,-1) {$\circ$};
\node (9) at (1,-1) {$\circ$};
\node (10) at (2,-1) {$\circ$};
\node (11) at (3,-1) {$\circ$};
\node (12) at (4,-1) {$\circ$};
\node (13) at (5,-1) {$\circ$};
\node (14) at (6,-1) {$\circ$};

\draw[->] (8) -- (9);
\draw[->] (9) -- (10);
\draw[->] (10) -- (11);
\draw[dashed] (11) -- (12);
\draw[->] (12) -- (13);

\node (17) at (5,-2) {$\circ$};
\node (18) at (6,-2) {$\circ$};

\node (21) at (5,-3) {$\circ$};
\node (22) at (6,-3) {$\circ$};

\node (23) at (0,-4) {$\circ$};
\node (24) at (1,-4) {$\circ$};
\node (25) at (2,-4) {};

\node (27) at (5,-4) {$\circ$};
\node (28) at (6,-4) {$\circ$};

\draw [->] (23) -- (24);
\draw[dashed] (24) -- (25);

\draw[->] (7)-- (14);
\draw[->] (14)-- (18);
\draw[->] (18)-- (22);
\draw[->] (22)-- (28);
\draw[->] (13)-- (17);
\draw[->] (17)-- (21);
\draw[->] (21)-- (27);

\draw[dashed] (0,-2) -- (4,-2);
\draw[dashed] (0,-3) -- (3,-3);
\draw[dashed] (4,-2) -- (4,-4);
\draw[dashed] (3,-3) -- (3,-4);
\end{tikzpicture}
\end{center}

\subsection{Crystal Bases for $O_q(G)$}
We can translate our crystal basis theory from $U_q(\mathfrak{g})$ to $O_q(G)$. 

\begin{lemma}\cite[Section 0.1]{kashiwara93}\label{oqgcrystal}
Recall the Quantum Peter-Weyl Theorem for $O_q(G)$ given in \ref{qpw}. The corresponding crystal basis of $O_q(G)$ is

\begin{equation}
    (L(O_q(G)),B(O_q(G)))=\bigoplus_{\lambda\in\Lambda^+}(L(\lambda), B(\lambda))\otimes (L(\lambda)^*,B(\lambda)^*)
\end{equation}
So, the crystal associated with $O_q(G)$ is
\begin{equation}
    B(O_q(G))=\bigsqcup_{\lambda\in\Lambda^+}B(\lambda)\otimes B(-\lambda)
\end{equation}
where $B(-\lambda)$ is defined to be $B(\lambda)^\vee\simeq B(\lambda)^*$ , where $B(\lambda)^\vee=\{b^\vee\mid b\in B(\lambda)\}$ with $\tilde{E}_\alpha(b^\vee)=(\tilde{F}_\alpha(b))^\vee$, $\tilde{F}_\alpha(b^\vee)=(\tilde{E}_\alpha(b))^\vee$ and ${\normalfont{wt}}(b^\vee)=-{\normalfont{wt}}(b)$. 
\end{lemma}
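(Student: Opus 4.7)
The plan is to apply the Quantum Peter--Weyl Theorem \ref{qpw} to reduce the statement to a claim about crystal bases on the summands $V(\lambda)\otimes V(\lambda)^*$, and then to assemble these crystal bases using the tensor product and direct sum constructions available in \textbf{Crys}.

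First I would combine Theorem \ref{qpw} with Theorem \ref{reducibility}: the Peter--Weyl decomposition realises $O_q(G)$ as an object in (a completion of) $\mathcal{O}_{int}^q(\mathfrak{g})$, decomposing as $\bigoplus_{\lambda\in\Lambda^+}V(\lambda)\otimes V(\lambda)^*$, and Theorem \ref{reducibility} shows that a crystal basis of such a semisimple module is precisely the direct sum of crystal bases of the simple summands. Thus it suffices to produce a crystal basis of each $V(\lambda)\otimes V(\lambda)^*$ of the form $(L(\lambda),B(\lambda))\otimes(L(\lambda)^*,B(\lambda)^*)$.

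Second, I would write down a crystal basis of $V(\lambda)^*$ by dualising the crystal basis of $V(\lambda)$ furnished by Theorem \ref{crystalv(lambda)}. Concretely, $L(\lambda)^*$ is the $\mathbf{A}$-dual lattice inside $V(\lambda)^*$ and $B(\lambda)^*\subseteq L(\lambda)^*/qL(\lambda)^*$ is the basis dual to $B(\lambda)$; under the action induced by the antipode of $U_q(\mathfrak{g})$, the Kashiwara operators interchange roles, giving an isomorphism of crystals $B(\lambda)^*\simeq B(\lambda)^\vee=B(-\lambda)$ with $\mathrm{wt}(b^\vee)=-\mathrm{wt}(b)$ and $\tilde{E}_\alpha,\tilde{F}_\alpha$ swapped, exactly as in the statement. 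Then I would apply the standard result that the tensor product of two crystal bases is a crystal basis of the tensor product module, whose underlying crystal satisfies the combinatorial rule of Definition \ref{monoidalcrystalcat}; applied to $V(\lambda)\otimes V(\lambda)^*$ this gives the summand $(L(\lambda)\otimes L(\lambda)^*,B(\lambda)\otimes B(\lambda)^*)$.

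Finally, taking the direct sum over $\lambda\in\Lambda^+$ and transporting across the Peter--Weyl isomorphism $\phi$ yields the stated crystal basis of $O_q(G)$, and the crystal structure on the disjoint union $\bigsqcup_{\lambda\in\Lambda^+}B(\lambda)\otimes B(-\lambda)$ is inherited componentwise. The main obstacle I anticipate is the careful identification in the second step: one must verify that the Kashiwara operators on $V(\lambda)^*$ induced by the $U_q(\mathfrak{g})$-action really do correspond, modulo $qL(\lambda)^*$, to the combinatorial operators defining $B(\lambda)^\vee$, and that the resulting pair $(L(\lambda)^*,B(\lambda)^*)$ genuinely satisfies Kashiwara's axioms of a crystal basis rather than merely being a formal dual. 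Once this compatibility is in place, the remaining assembly via \ref{reducibility} and the tensor product rule is formal.
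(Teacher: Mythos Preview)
The paper does not actually give a proof of this lemma: it is stated with a citation to \cite{kashiwara93} and followed immediately by a remark and an example, with no proof environment. So there is nothing in the paper to compare your argument against.

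That said, your outline is the standard and correct way to establish the result, and it is essentially what underlies Kashiwara's statement. The logical skeleton---Peter--Weyl decomposition (Theorem~\ref{qpw}), crystal basis of each $V(\lambda)$ from Theorem~\ref{crystalv(lambda)}, dualisation to obtain $(L(\lambda)^*,B(\lambda)^*)\simeq B(\lambda)^\vee$, tensor product of crystal bases, and then assembly via Theorem~\ref{reducibility}---is exactly right. Your own identification of the delicate point is also accurate: the nontrivial content lies in checking that the Kashiwara operators on $V(\lambda)^*$, computed via the antipode-twisted action, really do reduce modulo $qL(\lambda)^*$ to the combinatorial operators of $B(\lambda)^\vee$. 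One further subtlety you should flag is that the Peter--Weyl isomorphism is one of $U_q(\mathfrak{g})$-$U_q(\mathfrak{g})$-\emph{bimodules}, so the two tensor factors carry the left and right actions separately; the crystal structure on $B(\lambda)\otimes B(-\lambda)$ is correspondingly a ``bicrystal'' structure rather than a single-sided one, and you should make clear which action you are tracking when you invoke the tensor-product rule of Definition~\ref{monoidalcrystalcat}.
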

\begin{remark}
We see that $B(\lambda)^\vee$ is the crystal obtained from $B(\lambda)$ by reversing the direction of arrows in the crystal graph.
\end{remark}
\begin{exmp}
Therefore, in the case of $O_q(SL_2)$, our corresponding crystal is
\begin{equation}B(O_q(SL_2))=\bigsqcup_{n\in\mathbb{Z}_{\geq 0}}B(n)\otimes B(-n)\end{equation}
However, we see from \ref{sl2crys} that $B(n)\simeq B(-n)$, and so 
\begin{equation}B\simeq\bigsqcup_{n\in\mathbb{Z}_{\geq 0}}B(n)^{\otimes 2}\end{equation}

\end{exmp}

\section{Crystal Bases for Subgroups of Quantum Groups}\label{crystalbasesforsubgroupsofquantumgroups}
Extending crystal basis theory to subgroups of quantum groups is an open and under-researched problem. Currently, only a handful of subgroups can be given crystal bases, including the case of a symmetric pair \cite{watanabe18,huanchenwang18}. Here we discuss constructions of crystal bases for certain subgroups of quantum groups and remark on why we cannot use similar methods for all subgroups.  

\subsection{The Crystal Bases of $U_q^\pm(\mathfrak{g})$}

We consider $U_q(\mathfrak{g})$ as a module over itself via the adjoint action and obtain the following weight space decomposition \cite[Definition I.8.11]{browngoodearl02}
\begin{equation}
    U_q(\mathfrak{g})=\bigoplus_{\xi\in\mathbb{Z}\Phi}U_q(\mathfrak{g})_\xi, \text{ where } U_q(\mathfrak{g})_\xi=\{ u\in U_q(\mathfrak{g})\mid K_\mu u K_\mu^{-1}=q^{(\xi,\mu)}u\text{ for }\mu\in\mathbb{Z}\Phi\}
\end{equation}
Moreover, this is a $\mathbb{Z}\Phi$-\textit{graded algebra} since $U_q(\mathfrak{g})_\xi U_q(\mathfrak{g})_\eta\subseteq U_q(\mathfrak{g})_{\xi+\eta}$. We notice that $K_\alpha^{\pm 1}\in U_q(\mathfrak{g})_0$, $E_\alpha\in U_q(\mathfrak{g})_\alpha$ and $F_\alpha\in U_q(\mathfrak{g})_{-\alpha}$ for all $\alpha\in\Pi$.
Therefore, we can restrict this grading to give $\mathbb{Z}^{\geq 0}\Phi$-gradings on the subgroups $U_q^{\geq 0}$, $U_q^+(\mathfrak{g})$ and $\mathbb{Z}^{\leq 0}\Phi$-gradings on the subgroups
$U_q^{\leq 0}$, $U_q^-(\mathfrak{g})$.\\

In \cite[Section 8]{kashiwara95}, M. Kashiwara discusses a construction of a crystal base of $U_q^-(\mathfrak{g})$, derived from the crystal base of $V(\lambda)$ as $\lambda$ tends to infinity. For $\lambda\in\Lambda^+$, consider the surjective $U_q^-(\mathfrak{g})$ homomorphism
\begin{equation}
\pi_\lambda:U_q^-(\mathfrak{g})\rightarrow{V(\lambda)},\text{  given by }\pi_\lambda(x)=x u_\lambda
\end{equation}
where $u_\lambda$ is the highest weight vector of $V(\lambda)$. $U_q^-(\mathfrak{g})$ has a $\mathbb{Z}_{\leq 0}$-grading given by 
\begin{equation}
U_q^-(\mathfrak{g})=\bigoplus_{\xi\in\mathbb{Z}_{\leq 0}\Phi}U_q(\mathfrak{g})_\xi    
\end{equation}
Suppose that $x\in U_q^-(\mathfrak{g})_\xi$. Then for all $\alpha\in\Pi$ we have 
\begin{align}
\begin{aligned}
K_\mu\pi_\lambda(x) &=K_\mu xu_\lambda,\\
                    &=q^{(\xi,\mu)}x K_\mu u_\lambda, \text{ since } K_\mu x K_\mu^{-1}=q^{(\xi,\mu)}x\text{ as } x\in U_q^-(\mathfrak{g})_\xi,\\
                    &=q^{(\xi,\mu)}xq^{(\lambda,\mu)}u_{\lambda},\text{ since } u_\lambda\in V(\lambda), \\
                    & =q^{(\xi+\lambda, \mu)} \pi_\lambda(x)
\end{aligned}
\end{align}

Therefore,
\begin{equation}
\pi_\lambda(U_q^-(\mathfrak{g})_\xi)\subseteq \{u\in V(\lambda):K_\mu u=q^{(\lambda+\xi,\mu)}u\text{ for all } \mu\in\mathbb{Z}\Phi \}=V(\lambda)_{\lambda+\xi}   
\end{equation}
From Definition \ref{irreduciblereps}, we see that the kernel of $\pi_\lambda$ is $\sum_{\alpha\in\Pi}U_q^-(\mathfrak{g})F_\alpha^{1+a_{\lambda\alpha}}$, as $u_\lambda$ is a highest weight vector and $U_q^-(\mathfrak{g})$ is spanned by $F_\alpha$ for $\alpha\in\Pi$. If $a_{\lambda\alpha}\gg 0$, then ${\normalfont{\text{Ker }}}\pi_\lambda\cap U_q^-(\mathfrak{g})_\xi=\{0\}$ so that, by the first isomorphism theorem
\begin{equation}
    U_q^-(\mathfrak{g})_\xi\simeq V(\lambda)_{\lambda+\xi}
\end{equation}
Therefore, using the grading of $U_q^-(\mathfrak{g})$,  we can regard $U_q^-(\mathfrak{g})$ as the limit of $V(\lambda)$ as $a_{\lambda\alpha}$ tends to infinity. \\

We now examine the behaviour of $V(\lambda)$ as $\lambda$ tends to infinity. Kashiwara shows that, for $\eta\in\Lambda^+$, there exists a  map $p_{\lambda,\lambda+\eta}:V(\lambda+\eta)\rightarrow{V(\lambda)}$ sending $L(\lambda+\eta)$ to $L(\lambda)$. This induces a morphism of crystals $B(\lambda+\eta)\rightarrow{B(\lambda)\otimes B(\eta)}\rightarrow{B(\lambda)\otimes T_\eta}$, where $T_\eta$ is defined in \ref{tlambda}. This morphism commutes with the $F_\alpha s$ but not the $E_\alpha s$. Therefore, as $\lambda$ tends to infinity, we obtain a crystal basis $(L(\infty),B(\infty))$ of $U^-_q(\mathfrak{g})$.

Explicitly, $L(\infty)$ is the $\textbf{A}$-submodule of $U_q^-(\mathfrak{g})$ spanned by vectors of the form $\tilde{F}_{\alpha_1}\tilde{F}_{\alpha_2}\dots \tilde{F}_{\alpha_k}\cdot 1$ $(k\geq 0, \alpha_i\in\Pi)$ and $B(\infty)$ is the set
\begin{equation}
B(\infty)=\{\tilde{F}_{\alpha_1}\tilde{F}_{\alpha_2}\dots \tilde{F}_{\alpha_k}\cdot 1{\normalfont{\text{ mod }}qL(\infty)}\mid \alpha_i\in\Pi, k\geq 0\}\backslash\{0\}   
\end{equation}
We obtain a similar basis $(L(-\infty),B(-\infty))$ of $U_q^+(\mathfrak{g})$

\begin{remark}
We see that the method used in this construction cannot generally be extended to other subgroups of $U_q(\mathfrak{g})$ since they don't always have the same graded structure. As such, we cannot always find appropriate isomorphisms with our simple modules $V(\lambda)$.
\end{remark}

\subsection{The Crystal Bases of the Subalgebra of Invariants of $U_q^\pm(\mathfrak{g})$ on $O_q(G)$}

We now consider the $O_q(G)$ case. Recall the quantum Peter-Weyl Theorem from Theorem \ref{qpw}. We examine the actions of $U_q^\pm(\mathfrak{g})$ on this decomposition.
\begin{thm}\cite[Lemma 4.15]{ganev16}\cite[Theorem 7.8]{letzter02}
The set of right invariants of $U^+_q(\mathfrak{g})$ on $O_q(G)$ can be expressed using the quantum Peter-Weyl Theorem as
\begin{equation}A=O_q(G)^{U_q^+(\mathfrak{g})}=\bigoplus_{\lambda\in\Lambda^+}V(\lambda)\otimes (V(\lambda)^*)^{U_q^+(\mathfrak{g})}\end{equation}
\end{thm}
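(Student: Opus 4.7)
The strategy is to invoke the quantum Peter-Weyl decomposition (Theorem~\ref{qpw}) and analyse how the right $U_q^+(\mathfrak{g})$-action distributes across the two tensor factors of each summand. By the remark following Theorem~\ref{qpw}, under the bimodule isomorphism
\[
\phi: \bigoplus_{\lambda\in\Lambda^+} V(\lambda) \otimes V(\lambda)^* \xrightarrow{\sim} O_q(G),
\]
the right $U_q(\mathfrak{g})$-action is carried to the action on the second factor $V(\lambda)^*$ alone, while the left action lives on $V(\lambda)$.

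First I would note that each Peter-Weyl summand $V(\lambda)\otimes V(\lambda)^*$ is stable under the right action of every $u\in U_q(\mathfrak{g})$, and in particular under $U_q^+(\mathfrak{g})$. Since taking right $U_q^+(\mathfrak{g})$-invariants is a left-exact additive functor that commutes with arbitrary direct sums, this gives
\[
O_q(G)^{U_q^+(\mathfrak{g})} \simeq \bigoplus_{\lambda\in\Lambda^+} \bigl(V(\lambda)\otimes V(\lambda)^*\bigr)^{U_q^+(\mathfrak{g})}.
\]

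Next I would compute each summand separately. Because the right action of $u\in U_q^+(\mathfrak{g})$ on $V(\lambda)\otimes V(\lambda)^*$ is $v\otimes f\mapsto v\otimes(f\cdot u)$, the invariance condition decouples: picking a basis $\{v_i\}$ of $V(\lambda)$ and writing a general element as $\sum_i v_i\otimes f_i$, the equation $(v\otimes f)\cdot u = \varepsilon(u)(v\otimes f)$ is equivalent to the system $f_i\cdot u = \varepsilon(u) f_i$ holding for every $i$ and every $u\in U_q^+(\mathfrak{g})$. Hence
\[
\bigl(V(\lambda)\otimes V(\lambda)^*\bigr)^{U_q^+(\mathfrak{g})} = V(\lambda)\otimes \bigl(V(\lambda)^*\bigr)^{U_q^+(\mathfrak{g})},
\]
and combining with the previous display yields the stated decomposition.

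The one point that genuinely needs care is the very first observation: that $\phi$ intertwines the right $U_q(\mathfrak{g})$-action on $O_q(G)$ (defined via the Hopf pairing by $a\cdot u = \sum(u, a_{(1)})a_{(2)}$) with the second-factor action on the Peter-Weyl sum. This comes down to a Sweedler-style computation on coordinate functions $c_{f,v}$, showing that $c_{f,v}\cdot u = c_{f\cdot u, v}$, where $f\cdot u$ denotes the dual action on $V(\lambda)^*$ induced from $V(\lambda)$. Once this compatibility is in hand, the rest of the argument is the essentially formal observation that invariants distribute over direct sums and commute with the trivially-acted tensor factor, so no further obstacle arises.
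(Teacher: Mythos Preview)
The paper does not actually prove this theorem: it is stated with citations to \cite[Lemma 4.15]{ganev16} and \cite[Theorem 7.8]{letzter02} and then used without further argument. So there is no in-paper proof to compare against.

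That said, your proposal is correct and is precisely the natural argument one would extract from the surrounding text. The paper already records the one fact you flag as needing care, namely that under the Peter-Weyl isomorphism the right $U_q(\mathfrak{g})$-action on $O_q(G)$ corresponds to the action on the factor $V(\lambda)^*$ (this is the Remark immediately after Theorem~\ref{qpw}). Once that is granted, your two remaining steps --- that taking $U_q^+(\mathfrak{g})$-invariants commutes with the direct sum, and that for an action touching only the second tensor factor one has $(V(\lambda)\otimes V(\lambda)^*)^{U_q^+(\mathfrak{g})} = V(\lambda)\otimes (V(\lambda)^*)^{U_q^+(\mathfrak{g})}$ --- are formal linear algebra. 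Your sketch of the compatibility $c_{f,v}\cdot u = c_{f\cdot u,\,v}$ is also the right computation to justify the Remark, and follows directly from the definition $c_{f,v}(u') = f(u'v)$ together with the Hopf-pairing formula for the right action.
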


Further, the space $(V(\lambda)^*)^{U_q^+(\mathfrak{g})}$ is the one-dimensional $U_q^0(\mathfrak{g})$-module generated by the dual of the highest weight vector of $V(\lambda)$, $u_\lambda^*$ \cite[Lemma 4.15]{ganev16}. Therefore, 
\begin{equation}
V(\lambda)\otimes (V(\lambda)^*)^{U_q^+(\mathfrak{g})}\simeq V(\lambda)
\end{equation}
And so, 
\begin{equation}
    A\simeq\bigoplus_{\lambda\in\Lambda^+}V(\lambda)
\end{equation}
Similarly, the set of left invariants of $U_q^+(\mathfrak{g})$ on $O_q(G)$ can be expressed as 

\begin{equation}
{}^{U_q^+(\mathfrak{g})} O_q(G)\simeq\bigoplus_{\lambda\in\Lambda^+}V(\lambda)^*
\end{equation}

We obtain a similar result by considering the invariants of the action of $U_q^-(\mathfrak{g})$.
$A$ is a right coideal subalgebra of $O_q(G)$ by Proposition \ref{prop1} and so $O_q(G)/O_q(G)A^+$ is a subgroup of $O_q(G)$ by Proposition \ref{prop2}. We can use Theorem \ref{crystalv(lambda)} to find the crystal base associated with $A$ and we find that the associated crystal is 
\begin{equation}B(A)=\bigsqcup_{\lambda\in\Lambda^+}B(\lambda)\end{equation}

\begin{remark}
However, we cannot find the crystals associated with all coideal subalgebras of $O_q(G)$ constructed as sets of invariants in this way. This is because the right-hand factor may consist of a module generated by a set of polynomials in the duals of the highest weight vectors, to which we cannot assign crystals. Moreover, we cannot find crystal bases for our quotient left $O_q(G)$-module coalgebras, equivalently subgroups of $O_q(G)$, using our current techniques. 
\end{remark}

\section{Topics for Further Research}\label{furtherresearch}
We consider a number of possible extensions of material in this essay.

\subsection{Subgroups of $U_q(\mathfrak{sl}_2)$ and $O_q(SL_2)$}

We saw in Section \ref{subgroupsuq(sl2)} that all right coideal subalgebras of $U_q(\mathfrak{sl}_2)$ are of the stated form. We constructed an example of a subgroup of $O_q(G)$ using one of these subgroups and Propositions \ref{prop1}, \ref{prop2}. However, this is quite a laborious process. We could perhaps apply similar reasoning to Vocke in \cite{vocke18} to find and classify the subgroups of $O_q(SL_2)$. We guess that there are fewer subgroups of $O_q(SL_2)$ than of $U_q(\mathfrak{sl}_2)$. Our intuition comes from the classical case, where there are fewer algebraic subgroups than semisimple Lie subalgebras.  
\subsection{Reductive Subgroups}

In Definition \ref{reductivesubgroup}, we defined `reductive subgroups' of $O_q(G)$ as subgroups satisfying certain coflatness conditions, using remarks in \cite{christodoulou16}. We would like to examine whether these subgroups correspond to classical reductive subgroups of $G$.

\subsection{A Categorical Definition of Subgroups of Quantum Groups}
In Section \ref{subgroupcategories}, we gave categorical characterisations of subgroups of $O_q(G)$ and $U_q(\mathfrak{g})$. It would be interesting to see under which conditions we can find a converse statement. We can then use our characterisation as a definition for a certain class of subgoups.\\

For example, in \cite[Theorem 6.11]{christodoulou16}, Christodoulou shows that if $\mathcal{M}^C$ is a module category over $\mathcal{M}^{O_q(G)}$ then, if the functor $\Psi:\mathcal{M}^{O_q(G)}\rightarrow{\mathcal{M}^C}$ satisfies certain conditions, which are too involved to state here,  then $C$ has the structure of a left $O_q(G)$-module quotient coalgebra. Moreover, $O_q(G)$ is faithfully coflat over $C$. Therefore, we should be able to obtain a restricted class of reductive subgroups of $O_q(G)$ such that we can find a converse to our categorical characterisation.\\

In \cite{christodoulou16}, Christodoulou defines subgroups of all quantum groups as quotient left module coalgebras. We changed her definition of subgroups of $U_q(\mathfrak{g})$ to emphasise the duality between $U_q(\mathfrak{g})$ and $O_q(G)$. Our categorical characterisation of subgroups $A$ of $U_q(\mathfrak{g})$ in terms of the category $\mathcal{M}^{U_q(\mathfrak{g})}_A$ does not seem restrictive enough. Therefore, we could try to develop a more restricted categorical characterisation for which we could find a converse; perhaps for a certain class of subgroups $A$ with $U_q(\mathfrak{g})$ faithfully coflat over $U_q(\mathfrak{g})$, a `reductive subgroup' of $U_q(\mathfrak{g})$. 

\subsection{Crystal Bases for Subgroups of Quantum Groups}
In Section \ref{thecategoryofcrystals}, we constructed crystal bases for integrable $U_q(\mathfrak{g})$-modules and for $O_q(G)$. In Section \ref{crystalbasesforsubgroupsofquantumgroups}, we discussed constructions of crystal bases for certain subgroups of $U_q(\mathfrak{g})$ and for specific invariants of subgroups of $U_q(\mathfrak{g})$ on $O_q(G)$, using Kashiwara's crystal bases for the simple $U_q(\mathfrak{g})$-modules in Definition \ref{crystalv(lambda)}. However, we see that our current understanding of crystal bases is limited, and that we need to develop new techniques to study crystal bases for our subgroups of $U_q(\mathfrak{g})$ and $O_q(G)$.\\

One possible solution could involve category theory. In Section \ref{subgroupcategories}, we discussed the connections between subgroups and module categories, and in Proposition \ref{crys}, we showed that the category of crystals, \textbf{Crys}, has a monoidal structure. Therefore, it seems intuitive to examine certain module categories over the category of crystals and see if they correspond to module categories associated with subgroups. We could call these `categories of module crystals'. \\

The comodules of $O_q(G)$ are precisely the $U_q(\mathfrak{g})$-modules in $\mathcal{O}_{int}^q(\mathfrak{g})$. This is because $O_q(G)$ is a direct sum of coalgebras $V(\lambda)\otimes V(\lambda)^*$, whose comodules are precisely direct sums of copies of $V(\lambda)$. We have a good understanding of the crystal bases $B(\lambda)$, as defined in Definition \ref{crystalv(lambda)}, of these $V(\lambda)$. Denote by $\textbf{Crys}_\mathfrak{g}$ the subcategory of $\textbf{Crys}$ whose objects are crystals which are coproducts 
of crystals of the form $B(\lambda)$. Suppose we have a module category $\mathcal{M}^C$ over $\mathcal{M}^{O_q(G)}$ such that the functor $\Psi:\mathcal{M}^{O_q(G)}\rightarrow{\mathcal{M}^C}$ satisfies the conditions of \cite[Theorem 6.11]{christodoulou16}, then $C$ is a reductive subgroup of $O_q(G)$. It would be interesting to try to construct a corresponding module category over $\textbf{Crys}_\mathfrak{g}$ such that the functor between them satisfies the same conditions. We could then consider this the category of module crystals corresponding to the subgroup $C$.

\section{Conclusion}
In the same way that quantum mechanics has helped shape Physicists' understanding of classical mechanics, we have seen how the study of quantum groups has informed our understanding of classical algebra. We have shown how techniques from other areas of Mathematics, such as combinatorics and category theory, can help us develop this beautiful theory. It will be exciting to see whether the ideas  discussed in this essay will provide us with the solution to the problem of finding crystal bases for subgroups of quantum groups.

\section*{Acknowledgements}
I would like to thank Professor Kobi Kremnitzer for his invaluable advice and support during the writing of this essay. 

\pagebreak
\nocite{joseph95}
\nocite{smith15}
\nocite{etingof15}
\nocite{kirillov02}
\nocite{ritter19}
\nocite{montgomery93}
\nocite{kashiwara91}
\bibliographystyle{plain}
\bibliography{references}

\end{document}